\pgfplotsset{compat=newest}
\numberwithin{equation}{section}
\numberwithin{chap}{section}
\newtheorem{thm}{Theorem}
\numberwithin{thm}{section}
\newtheorem{prop}[thm]{Proposition}
\newtheorem{defn}[thm]{Definition}
\newtheorem{lem}[thm]{Lemma}
\newtheorem{cor}[thm]{Corollary}
\begin{document}

\pagestyle{myheadings} \thispagestyle{empty} \markright{}
\title{On Sets Containing an Affine Copy of Bounded Decreasing Sequences}	
\author{Tongou Yang}  

\subjclass[2010]{11B05, 28A78, 28A12, 28A80 }
\maketitle

\begin{abstract}
   How small can a set be while containing many configurations? Following up on earlier work of Erd\H os and Kakutani \cite{MR0089886}, M\'ath\'e \cite{MR2822418} and Molter and Yavicoli \cite{Molter}, we address the question in two directions. On one hand, if a subset of the real numbers contains an affine copy of all bounded decreasing sequences, then we show that such subset must be somewhere dense. On the other hand, given a collection of convergent sequences with prescribed decay, there is a closed and nowhere dense subset of the reals that contains an affine copy of every sequence in that collection.
\end{abstract}

{\textbf{\textit{Keywords: }} Sparse sets containing pattern, dimension, density}

\section{Introduction}
Given sets $A,B\subseteq \mathbb R$, we say that $A$ contains the pattern $B$ if $A$ contains an affine copy of $B$, i.e. if there exist $\delta\neq 0$ and $t\in \mathbb R$ such that $t+\delta B\subseteq A$. Identification of patterns in sets is an active research area, and there are questions of many flavours:
\begin{enumerate}
    \item {\it Which types of patterns are guaranteed to exist in large sets?} For example, a classical consequence of the Lebesgue density theorem is that if $E\subseteq \mathbb R$ has positive Lebesgue measure, then it contains an affine copy of all finite sets. In sets of fractal dimensions, \L aba and Pramanik \cite{MR2545245} proved that if a fractal set $A$ supports a measure satisfying a Frostman's condition and has sufficiently large Fourier decay, then $A$ must contain a $3$-term arithmetic progression. Last, but not least, one of the most famous conjectures in this direction is the Erd\"os distance conjecture; there are many substantial results established by Bennett, Greenleaf, Iosevich, Liu, Palsson, Taylor, etc. See \cite{MR3518531}\cite{MR3365800}\cite{MR3420476} for more details.
    \item {\it Can there exist large sets avoiding prescribed patterns?} A famous conjecture in this direction is the Erd\H os similarity problem (see \cite{MR0429704}), which is stated as follows: for each infinite set $S\subseteq \mathbb R$, does there exist a measurable set $E$ with positive Lebesgue measure that does not contain any affine copy of $S$? There are partial results to this conjecture by Bourgain, Falconer, Kolountzakis, etc; see \cite{Bourgain}\cite{MR722418}\cite{MR1446560}.
    
    Apart from Erd\H os similarity conjecture, there are also lots of well-known results above large sets avoiding patterns. Keleti \cite{MR2431353} showed that for any set $A \subseteq \mathbb R$ of at least 3 elements there exists a set of Hausdorff dimension $1$ that contains no similar copy of $A$. In this direction, Shmerkin \cite{MR3658188} showed that there exists a set of Fourier dimension $1$ that contains no $3$-term arithmetic progression. In another direction, Fraser and Pramanik \cite{MR3785600} obtained a general result that there exists sets of large Hausdorff dimension and full Minkowski dimension that avoids all patterns prescribed by a large family of functions.
    \item {\it How small can a set be while containing many patterns?} This will be the main point of concern in this article.
\end{enumerate}    

\subsection{Literature Review}
    In 1955, Erd\H os and Kakutani \cite{MR0089886} proved that there is a perfect set $A\subseteq [0,1]$ of Lebesgue measure $0$ and Hausdorff dimension $1$ which satisfies the following property: for each $n\geq 1$, there is $\eta_n>0$ such that if $P\subseteq \mathbb R$ is a finite set with $\leq n$ elements and with diameter $<\eta_n$, then there is $t\in \mathbb R$ such that $P+t\subseteq A$. In particular, such perfect $A$ with Lebesgue measure $0$ and Hausdorff dimension $1$ contains an affine copy of every finite set. This result marked the beginning of the study of small sets containing many prescribed patterns. 
    
    In 2008, M\'ath\'e \cite{MR2822418} constructed a compact set $C$ with Hausdorff dimension $0$ that contains an affine copy of all finite sets. Actually, the set $C$ he constructed contains a translate of every set that he calls a ``slalom". One can show that for every finite set $F$, there is a slalom that contains an affine copy of $F$. Looking closer into his construction, he is even able to show that $C$ contains an affine copy of every infinite bounded decreasing sequence with sufficiently rapid decay.
    
    In 2016, Molter and Yavicoli \cite{Molter} proved the following result: given a (possibly uncountable) family $\mathcal F$ of continuous functions on $\mathbb R^N$ obeying mild regularity conditions, there is an $F_\sigma$-set $E\subseteq \mathbb R^N$ of Hausdorff dimension $0$ such that
    $$
    \bigcap_{i\in \Lambda}f_i^{-1}(E)\neq \varnothing
    $$
    for any countable subcollection $\{f_i:i\in \Lambda\}\subseteq \mathcal F$. In particular, choosing $N=1$ and $\mathcal F=\{f_t(x)=x+t|t\in \mathbb R\}$, they are able to construct an $F_\sigma$-set $A\subseteq \mathbb R$ with Hausdorff dimension $0$ such that the following holds: given any $\{\alpha_m\}\subseteq \mathbb R$, there is $t\in \mathbb R$ such that $t+\alpha_m\in A$ for all $m$. A simpler proof of this special case is included in the appendix of this article.
    
    However, neither the set $E$ constructed in \cite{Molter} nor its simplification in the appendix of this paper is closed. In fact, even if a set $E\subseteq \mathbb R$ obeys the following weaker assumption:
    \begin{equation}\label{condition}
        \text{
        Given any $\{\alpha_m\}\subseteq \mathbb R$, there is $t\in \mathbb R$ and $\delta\neq 0$ such that $t+\delta\alpha_m\in E$ for all $m$,
        }
    \end{equation}
    then $\overline E$ should contain an interval. This can be seen by taking $S=\{\alpha_m\}$ to be an enumeration of all rationals in $[0,1]$. By assumption, there is $t\in \mathbb R$ and $\delta\neq 0$ such that $t+\delta S\subseteq E$. Taking closure on both sides shows that $[t,t+\delta]\subseteq \overline E$ if $\delta>0$ or $[t+\delta,t]\subseteq \overline E$ if $\delta<0$. If $E$ were closed, then $E$ itself should contain an interval, which would be a contradiction to the fact that $\mathrm{dim}_H(E)=0$. Thus, although $E$ in \cite{Molter} is small in terms of Hausdorff dimensions, it is quite large in the sense of topology.
    
    \subsection{Our main result}
    In this paper, we adopt a slightly different perspective from dimensionality which was the main concern of \cite{MR2822418} and \cite{Molter}. Instead, we use the topological notion of density to quantify largeness. A set is said to be {\it somewhere dense} if its closure contains an interval. We have just shown that any set $E$ satisfying Condition \eqref{condition} is somewhere dense; thus no closed set $E$ with $\mathrm{dim}_H(E)=0$ and satisfying Condition \eqref{condition} could be found. 
    
    As the simple example $\{\alpha_m\}=\mathbb Q\cap[0,1]$ suggests, the triviality of the problem above is mainly because $\{\alpha_m\}$ may have many accumulation points. Hence we weaken Condition \ref{condition} to the following:
    \begin{align}\label{condition2}
        &\text{
        Given any $\{\alpha_m\} $ which is strictly decreasing and bounded below, there is $t\in \mathbb R$ and}\nonumber\\
        &\text{$\delta\neq 0$ such that $t+\delta\alpha_m\in E$ for all $m$.
        }
    \end{align}
    Note that a bounded decreasing sequence has one and only one accumulation point in $\mathbb R$. This gives rise to the main question we are concerned in this paper.

{\bf Main question:} Let $E\subseteq \mathbb R$ be a set satisfying Condition \eqref{condition2}. Must $E$ be somewhere dense?

The answer to the main question is affirmative. This is the content of Theorem \ref{thm1} below.
 
\begin{thm}\label{thm1}
Let $E\subseteq \mathbb R$ be a set such that Condition \eqref{condition2} holds, i.e. for all sequences $\{\alpha_m\}_{m=1}^\infty$ strictly decreasing to $0$, there is $t\in \mathbb R$ and $\delta\neq 0$ such that $t+\delta\alpha_m\in E$ for all $m$. Then $E$ is somewhere dense.
\end{thm}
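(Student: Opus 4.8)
The plan is to prove the contrapositive: if $E\subseteq\mathbb R$ is \emph{not} somewhere dense, i.e. is nowhere dense, then there is a sequence $\{\alpha_m\}_{m=1}^\infty$ strictly decreasing to $0$ of which $E$ contains no affine copy. Since a sequence avoiding $\overline E$ also avoids $E$ and $\overline E$ is closed and nowhere dense, I may assume $E$ is closed; since $E\cup(-E)$ is again closed and nowhere dense and contains $E$, I may also assume $E=-E$, and then any affine copy $t+\delta\{\alpha_m\}\subseteq E$ with $\delta<0$ reflects to a copy $(-t)+|\delta|\{\alpha_m\}\subseteq E$ with positive dilation, so it suffices to rule out copies with $\delta>0$. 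Normalizing $\alpha_1=1$, every such copy lies in the compact interval $[t,t+\delta]$, so it is enough to produce a single sequence that, for every $N$, has no affine copy inside the compact nowhere dense set $E\cap[-N,N]$; and for a copy to fit there one automatically has $t\in E\cap[-N,N]$ and $0<\delta\le 2N$, a compact set of parameters. I therefore intend to build $\{\alpha_m\}$ in consecutive blocks, the $N$-th block being responsible for destroying all copies that could fit inside $E\cap[-N,N]$.

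The geometric core is an elementary ``jump over a gap'' observation. Suppose $t+\delta\{\alpha_m\}\subseteq E$ with $\delta>0$. Since $t+\delta\alpha_m\to t$ and $E$ is closed, $t\in E$, and $t$ must be a right-accumulation point of $E$ (otherwise the tail of the sequence falls into the complementary interval of $E$ immediately to the right of $t$). If $(a,b)$ is any bounded component of $\mathbb R\setminus E$ with $t<a<b<t+\delta$, then the strictly decreasing sequence $\{\delta\alpha_m\}$, which sweeps through every value of $(0,\delta)$, cannot have a term in $(a-t,b-t)$, so it must ``jump over'' it: there is an index $m$ with
$$\delta\alpha_{m+1}\le a-t<b-t\le\delta\alpha_m,\qquad\text{hence}\qquad \frac{\alpha_m}{\alpha_{m+1}}\ \ge\ \frac{b-t}{a-t}.$$
Consequently, to destroy the copy at parameters $(t,\delta)$ it suffices to exhibit a bounded complementary interval $(a,b)$ of $E$ with $t<a<b<t+\delta$ whose multiplicative length $\tfrac{b-t}{a-t}$ strictly exceeds the ratio $\alpha_m/\alpha_{m+1}$ at the index $m$ with $\delta\alpha_m$ the smallest term of the copy still $\ge b-t$.

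This dictates the shape of the construction: take $\{\alpha_m\}$ with ratios $R_m:=\alpha_m/\alpha_{m+1}$ decreasing to $1$, and let them decrease fast enough to be beaten, at the appropriate scales, by the multiplicative gap structure of $E$ as seen from every point of $E$. The natural device is to conjugate by $x\mapsto\log(x-t)$: this sends $E\cap(t,t+\rho)$ to a closed, nowhere dense subset $S_t$ of $(-\infty,\log\rho)$ whose gaps have lengths exactly $\log\tfrac{b-t}{a-t}$. Nowhere density forces $S_t$ to have gaps of positive length at every scale; what one needs is a lower bound on these lengths that is \emph{uniform over $t$ in the relevant compact piece of $E$} (it may, and in general must, tend to $0$ as the scale does). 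A lower-semicontinuity/compactness argument — the function $t\mapsto\sup\{\tfrac{b-t}{a-t}:(a,b)\text{ a gap of }E,\ t<a<b<t+\rho\}$ is lower semicontinuous and exceeds $1$ at every right-accumulation point — should give, for each $N$ and each scale $\rho$, a bound $\gamma_N(\rho)>1$ valid for all admissible $t$; one then makes the $N$-th block of $\{\alpha_m\}$ long enough and its ratios close enough to $1$ to beat $\gamma_N$ throughout the range of scales that block occupies, and checks, using the jump observation for every $(t,\delta)$ together with the symmetrisation above, that no affine copy survives.

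I expect the uniformity in $t$ to be the main obstacle. Different points of $E$ can have radically different local multiplicative structure — near a point where $E$ is very ``fat'' only gaps of tiny multiplicative length appear at the first several scales — so the naïve infimum over all $t\in E$ of the relevant gap length is typically $0$; even points isolated-from-the-right in $E$ need attention, since the adjacent gap may be arbitrarily small. The saving feature is that these pathologies are confined to ever smaller scales as $t$ varies, so they can be absorbed by making successive blocks progressively longer and slower-decaying; but setting up the compactness argument cleanly and bookkeeping which block controls which scales for which piece $E\cap[-N,N]$ is where the real work lies.
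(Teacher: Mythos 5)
Your plan shares the paper's two high-level moves -- reduce to $\delta>0$ via $E\cup(-E)$ and defeat nowhere dense $E$ with an arbitrarily slowly decreasing sequence -- but the geometric mechanism you propose is genuinely different from the paper's. Your ``jump over a gap'' observation (if $t+\delta\alpha_m\in E$ for all $m$, every complementary interval $(a,b)$ with $t<a<b<t+\delta$ forces an index $m$ with $\alpha_m/\alpha_{m+1}\ge(b-t)/(a-t)$) is a clean multiplicative reformulation that the paper never uses. The paper instead \emph{replaces} $E$ by a Cantor-type superset: inside each $[k,k+1]$ it deletes, at level $n$, a family of $2^{n-1}$ intervals all of the \emph{same} length $l_n$ sitting in middle thirds (Proposition~\ref{prop1}), then proves a covering statement (Proposition~\ref{contain}) saying the left $(2/3)^n$-neighbourhoods of the level-$n$ deleted intervals eventually cover $[0,1)$. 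This is precisely what manufactures the uniformity you identify as the obstruction: at every $t$ and every level $n$, the nearest deleted interval has prescribed length $l_n$ and sits at controlled relative position, so the argument never needs to quantify over the local gap geometry of the original $E$.

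The gap in your proposal is real and, as stated, two-fold. First, the parameter set is not compact in $\delta$: you allow $0<\delta\le 2N$, and as $\delta\to 0^+$ the jump index for any fixed gap escapes to $m<1$, forcing you to use ever-smaller gaps whose multiplicative ratios you do not control; there is no compactness to invoke there. Second, even at a fixed scale $\rho$, the set over which you need your lower bound $\gamma_N(\rho)>1$ is the set of right-accumulation points of $E$ lying in $E\cap[-N,N]$, and this set need not be closed: a sequence of right-accumulation points can converge to a right-isolated point of $E$, at which your supremum $f(t)=\sup\{(b-t)/(a-t)\}$ is taken over the empty set. Your function $f$ is lower semicontinuous, but the infimum of an lsc function over a non-compact set where it is merely $>1$ pointwise can be $1$, so the uniform bound $\gamma_N(\rho)>1$ does not follow. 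You flag this honestly (``I expect the uniformity in $t$ to be the main obstacle''), but the ``lower semicontinuity/compactness'' fix you sketch does not close it, and I do not see a drop-in repair within your framework that avoids re-deriving something like the paper's uniform Cantor scaffold. So the proposal is a promising and genuinely distinct angle of attack, but it is incomplete at the step you yourself identified, and the missing piece is exactly what the paper's Proposition~\ref{prop1} supplies by construction rather than by compactness.
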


As we shall see, the proof of Theorem \ref{thm1} relies on the arbitrarily slow decay of $\{\alpha_m\}$. Interestingly, our next main theorem shows that this is the only obstruction to having a nowhere dense set obeying Condition \ref{condition2}. In fact, if we specify a sequence with a prescribed decay, however slow, one can turn Theorem \ref{thm1} into a negative result. In this case, we can even take such set $A$ to be closed and bounded. 

\begin{thm}\label{thm2}
	Let $\{\beta_m\}_{m=1}^\infty$ with $\beta_m\searrow 0$ strictly. Then there is a closed and nowhere dense set $A\subseteq [0,1]$, depending on $\{\beta_m\}$, such that for any sequence $\alpha_m\to 0$ with $|\alpha_m|=O(\beta_m)$, there is $\delta>0$ and $t\in \mathbb R$ such that $t+\delta \alpha_m\in A$ for all $m$.
\end{thm}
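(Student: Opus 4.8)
The plan is to build $A$ as a Cantor-type set, i.e., as a decreasing intersection $A=\bigcap_{k\geq 0}A_k$ where each $A_k$ is a finite union of closed intervals obtained from $A_{k-1}$ by removing an open middle portion of each constituent interval. The lengths of the surviving intervals at stage $k$ will be governed by the decay rate $\{\beta_m\}$: the idea is that a sequence $\{\alpha_m\}$ with $|\alpha_m|=O(\beta_m)$ lives, after the affine normalization $t+\delta\alpha_m$, inside a nested family of intervals whose $k$-th member has length comparable to $\beta_{m_k}$ for a suitably chosen index $m_k$. Thus I would first fix a rapidly increasing sequence of indices $m_0<m_1<m_2<\cdots$ (depending only on $\{\beta_m\}$), and design $A_k$ so that each of its intervals has length roughly $\beta_{m_k}$ and so that consecutive generations are ``telescoped'' in a way that accommodates all the terms $\alpha_m$ with $m_{k-1}< m\leq m_k$ simultaneously inside one interval of $A_k$.

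Concretely, I would carry out the following steps. Step 1: record that since $|\alpha_m|=O(\beta_m)$ and $\beta_m\searrow 0$, for any prescribed index $m_k$ the entire tail $\{\alpha_m:m\geq m_k\}$ is contained in an interval of length $\lesssim \beta_{m_k}$; this is what lets a single surviving interval of $A_k$ ``catch'' the whole tail. Step 2: choose the gap ratios in the Cantor construction so that the number of children of each interval, together with the placement of these children, is large enough that for the finitely many terms $\alpha_{m_{k-1}+1},\dots,\alpha_{m_k}$ there is always a choice of child interval at stage $k$ whose rescaled copy contains exactly the portion of the sequence indexed in that block. In effect, at stage $k$ I am solving a finite combinatorial ``slalom''-type problem (in the spirit of M\'ath\'e's construction mentioned in the introduction): I need the children to be spread densely enough, relative to $\beta_{m_k}$, that the decreasing sequence can be threaded through them. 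Step 3: pass to the limit: the nesting produces, for each admissible $\{\alpha_m\}$, a point $t$ and scale $\delta>0$ with $t+\delta\alpha_m\in A_k$ for every $m$, hence $t+\delta\alpha_m\in A$. Step 4: verify that $A$ is closed (automatic, as an intersection of finite unions of closed intervals) and nowhere dense (ensure at each stage that a definite proportion of each interval is removed, so the lengths of stage-$k$ intervals tend to $0$, which forces $A$ to contain no interval).

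The delicate point — and the main obstacle — is the quantitative matching in Step 2: I must choose the index gaps $m_{k-1}\to m_k$ and the Cantor gap ratios in tandem so that, no matter how slowly $\{\beta_m\}$ decays, the ratio $\beta_{m_k}/\beta_{m_{k-1}}$ and the number of children can be arranged to leave enough room for every possible decreasing pattern in the block. Since $\{\beta_m\}$ is given in advance but otherwise arbitrary, the construction must be adaptive: I would define $m_k$ recursively as the first index for which $\beta_{m_k}$ is small enough (compared to $\beta_{m_{k-1}}$ times the reciprocal of the stage-$k$ branching number) to guarantee the threading, and then define $A_k$ using that $m_k$. A secondary technical issue is handling the $O(\cdot)$ constant: since $|\alpha_m|\leq C\beta_m$ with $C$ depending on the sequence, the scale $\delta$ will have to absorb $C$, which is fine because $\delta$ is allowed to depend on $\{\alpha_m\}$; but I must make sure the construction of $A$ itself does not secretly depend on $C$, only on $\{\beta_m\}$, which is why the ``catch the whole tail'' estimate in Step 1 should be stated with room to spare (e.g.\ using $\beta_{m_k-1}$ versus $\beta_{m_k}$, or an extra dyadic factor) so that an arbitrary finite rescaling still fits.
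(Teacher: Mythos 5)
Your proposal takes a genuinely different route from the paper and, as you yourself flag, leaves the central step unresolved; I would classify this as a gap rather than a complete alternative proof.

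The paper does not build $A$ as a nested Cantor set that one threads a sequence through. Instead it first dominates $\{\beta_m\}$ by a \emph{threshold sequence} $\{\eta_m\}$ (strictly decreasing to $0$ with $\eta_m-\eta_{m+1}$ also decreasing; see Lemma \ref{betaeta}), then fixes a countable base $\{V_n\}$ of open intervals for $(0,1)$ and sets $A=[0,1]\setminus\bigcup_n J_n$ where $J_n\subseteq V_n$ is a very short open interval of carefully chosen length $\lambda_n$. This makes $A$ closed and nowhere dense essentially for free. The containment of an affine copy of any $\{\alpha_m\}$ with $|\alpha_m|=O(\eta_m)$ is then established \emph{measure-theoretically}: using the structure lemma \ref{u1u2} (disjoint part plus overlapping part of $\bigcup_m J_n-\delta\alpha_m$) and the threshold condition, one shows $\sum_n\mathcal L^1(\bigcup_m J_n-\delta\alpha_m)<1$ for $\delta$ small, via dominated convergence and a summability estimate driven by the choice of $\lambda_n$. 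This gives $\mathcal L^1\bigl(\bigcap_m (A-\delta\alpha_m)\bigr)>0$, hence a valid translate $t$, with no explicit threading at all.

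The concrete gap in your Step 2 is twofold. First, you repeatedly speak of threading a ``decreasing pattern,'' but the hypothesis of Theorem \ref{thm2} is only $\alpha_m\to 0$ with $|\alpha_m|=O(\beta_m)$: the signs and the order of magnitude within a block can be arbitrary, so the combinatorial matching must accommodate every such non-monotone pattern simultaneously, which is substantially harder than the slalom argument for a fixed decreasing sequence. Second, once $(t,\delta)$ are fixed, every point $t+\delta\alpha_m$ (not only the tail) must survive \emph{all} later stages of the Cantor construction, so ``a single child interval catching a block'' is not the right invariant: you would need to control, at every stage $k$, the position of $t+\delta\alpha_m$ relative to the gaps removed at stage $k$, for all $m$ at once, uniformly over all admissible $\{\alpha_m\}$. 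That is precisely the quantitative matching you name as ``the delicate point'' and do not carry out. The measure-theoretic device in the paper is what makes this tractable: it replaces the need to exhibit an explicit threading with the observation that the union of all ``bad'' translates has total length strictly less than $1$, so some $t$ must survive. If you want to salvage a constructive Cantor approach, you would also need an analogue of the threshold-sequence reduction, since without the monotonicity of consecutive differences the overlap structure of $\bigcup_m J-\delta\alpha_m$ is not under control.
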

For example, we can take $\beta_m\searrow 0$ to be $(\log m)^{-1}$, or even $(\log\log m)^{-1}$, $(\log \log \log m)^{-1}$, etc, or we could take $\beta_m$ to be a fixed sequence that decreases slower than any finite iterations of the logarithmic function. Then we have the following corollary:
\begin{cor}
There is a closed, nowhere dense set $A\subseteq [0,1]$ containing an affine copy of all geometrically decreasing sequences (i.e. $\alpha_m=O(r^m)$ for some $0<r<1$), all sequences with polynomial decay (i.e. $\alpha_m=O(m^{-s})$ for some $s>0$) and all sequences with rate of decay faster than finitely many iterates of the logarithmic function (for example, $\alpha_m=O((\log (\log (\log m)))^{-1})$.
\end{cor}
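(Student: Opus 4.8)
The plan is to deduce the corollary directly from Theorem~\ref{thm2}: it suffices to exhibit a \emph{single} sequence $\beta_m\searrow 0$ (strictly) such that every sequence named in the statement is $O(\beta_m)$, and then apply Theorem~\ref{thm2} to that $\{\beta_m\}$. First I would reduce all three families to iterated logarithms. If $\alpha_m=O(r^m)$ with $0<r<1$, then $|\alpha_m|=O((\log m)^{-1})$ because $r^m\log m\to 0$; if $\alpha_m=O(m^{-s})$ with $s>0$, then likewise $|\alpha_m|=O((\log m)^{-1})$ because $m^{-s}\log m\to 0$; and a sequence of the third type satisfies $|\alpha_m|=O((\log^{(k)}m)^{-1})$ for some $k\ge 1$, where $\log^{(k)}$ denotes the $k$-fold iterated logarithm. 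Hence it is enough to build $\beta_m\searrow 0$ with $(\log^{(k)}m)^{-1}=O(\beta_m)$ for \emph{every} fixed $k\ge1$: Theorem~\ref{thm2} then produces one closed, nowhere dense $A\subseteq[0,1]$ that works simultaneously for all $\{\alpha_m\}$ in the three families.

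The only step requiring an argument is the construction of $\{\beta_m\}$, which is a diagonalization: no fixed finite iterate $(\log^{(k)}m)^{-1}$ decays slowly enough, but one can interleave them. I would use the iterated logarithm $\log^{*}m:=\min\{\,j:\log^{(j)}m\le 1\,\}$ and set, for $m$ large, $\beta_m:=(\log^{*}m)^{-1}$, adjusting the finitely many initial terms and adding a negligible strictly decreasing perturbation (say $2^{-m}$) so that $\{\beta_m\}$ is strictly decreasing. Then $\beta_m\searrow 0$, since $\log^{*}m$ is nondecreasing and $\log^{*}m\to\infty$. Moreover, for each fixed $k$, once $\log^{(k)}m\ge 1$ one has $\log^{*}m=k+\log^{*}(\log^{(k)}m)\le k+\log^{(k)}m$ (using the trivial bound $\log^{*}X\le X$ for $X\ge 1$), so $(\log^{(k)}m)^{-1}\le (k+1)(\log^{*}m)^{-1}=O(\beta_m)$, as needed. (An alternative self-contained construction: let $k(m)$ be the largest $k$ with $\log^{(k)}m\ge k$, which tends to $\infty$; put $\ell(m):=\log^{(k(m))}m$ and $\beta_m:=(\inf_{n\ge m}\ell(n))^{-1}+2^{-m}$; one checks $\ell(m)\ge k(m)\to\infty$ and $\ell(m)\le\log^{(j)}m$ whenever $k(m)\ge j$.)

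I expect the diagonal construction of $\{\beta_m\}$ to be the only real obstacle: one must simultaneously guarantee that $\beta_m\to 0$ — so that the application of Theorem~\ref{thm2} is not vacuous — and that $\beta_m$ still dominates each $(\log^{(k)}m)^{-1}$, and these requirements pull in opposite directions. The iterated logarithm resolves the tension cleanly, being slower than every finite iterate of $\log$ yet still unbounded. With $\{\beta_m\}$ fixed, the corollary follows with no further work: every sequence in the three families satisfies $|\alpha_m|=O(\beta_m)$, and Theorem~\ref{thm2} supplies the required set $A$.
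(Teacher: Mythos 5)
Your proposal is correct and follows the same route as the paper: construct a single $\beta_m\searrow 0$ (slower than every finite iterate of $\log$) that dominates all three families simultaneously, and then apply Theorem~\ref{thm2}. The paper merely asserts that such a $\beta_m$ can be chosen; you go further by explicitly building it via the iterated logarithm $\log^* m$ and verifying $(\log^{(k)}m)^{-1}=O((\log^* m)^{-1})$ for each fixed $k$, filling in a detail the paper leaves implicit.
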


\subsection{Generalisation to higher dimensions}
We may also consider extending Theorems \ref{thm1} and \ref{thm2} to higher dimensions. Let us take $n=2$ as an example.
\subsubsection{Extension of Theorem \ref{thm1} to higher dimensions}
Let $E\subseteq \mathbb R^2$. Suppose for all sequences $\alpha_m\to 0$ in $\mathbb R^2$, there is $\delta\neq 0$ and $t\in \mathbb R^2$ such that $t+\delta \alpha_m\in E$ for all $E$. Then what can we say about the density of $E$?

For example, given any unit vector $v\in \mathbb R^2$, we can take $\alpha_m$ to be any sequence converging to $0$ through the line with direction $v$. Applying Theorem \ref{thm1} on each line, we see that $E$ contains a line segment (although not necessarily of unit length) in every direction, so by another result of Keleti \cite{MR3451230}, $E$ must have the same Hausdorff dimension as a Kakeya set in $\mathbb R^2$. A famous result by Davies \cite{MR0272988} shows that any Kakeya set in $\mathbb R^2$ has full Hausdorff dimension, so $\mathrm{dim}_H(E)=2$.

But can we say more about $E$? Is it true that $\overline E$ contains an open ball in $\mathbb R^2$ as well? Since our proof of Theorem \ref{thm1} relies heavily on the interval structure on $\mathbb R$, it is not immediate to generalise the argument to the planar case.

\subsubsection{Extension of Theorem \ref{thm2} to higher dimensions}
Similarly, we may ask if given a sequence $\beta_m\searrow 0$ in $\mathbb R$ with prescribed decay, there is a closed and nowhere dense set $A\subseteq [0,1]\times [0,1]$ that contains an affine copy of every sequence $\mathbb R^2\ni\alpha_m\to 0$ with $|\alpha_m|=O(\beta_m)$. We hope to address this question in a future paper.

\subsection{Outline of the article}
This article has two main parts. The first part, from Section 2 to Section 5, gives the proof of Theorem \ref{thm1}. The second part is the proof of Theorem \ref{thm2}, which is given in Section 6. In the appendix we give a simple proof of the special case of Molter and Yavicoli's construction mentioned in the introduction, which is logically unrelated to the main theorems.

In Section 2, we introduce the necessary notation for this paper and do a preliminary reduction. In Section 3, we give a Cantor-like construction which is the key to the proof of Theorem \ref{thm1}. In Section 4, we construct a slowly decreasing sequence and use this, together with Lemma \ref{emp}, to prove Theorem \ref{thm1}. In Section 5, we prove Lemma \ref{emp}.

\section{Notation and reduction}
We start with some elementary lemmas in set theory and real number theory.
\subsection{Some set manipulations}
The following lemma on set relations will be used extensively in the article.
\begin{lem}\label{trans}
Let $A\subseteq \mathbb R$, let $\{A_i\subseteq \mathbb R:i\in I\}$ where $I$ is any index set, and let $t\in \mathbb R$. Then we have the following set relations:
\begin{align}
    (A+t)^c&=A^c+t,\label{ct}\\
    \bigcup_{i\in I}(A_i+t)&=\left(\bigcup_{i\in I}A_i\right)+t,\label{cupt}\\
    \bigcap_{i\in I}(A_i+t)&=\left(\bigcap_{i\in I}A_i\right)+t.\label{capt}
\end{align}
Hence without ambiguity, we may drop the parentheses in both sides of \eqref{cupt} and \eqref{capt}.
\end{lem}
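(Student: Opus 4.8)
The plan is to observe that each of the three identities simply says that translation by $t$ commutes with the relevant set operation, and to verify this by a direct membership chase. Write $\tau_t\colon\mathbb R\to\mathbb R$ for the map $\tau_t(x)=x+t$, so that $A+t=\tau_t(A)$; the one fact doing all the work is that $\tau_t$ is a bijection with inverse $\tau_{-t}$, which in elementary terms is the equivalence
$$
x\in S+t\iff x-t\in S,\qquad\text{valid for every }x\in\mathbb R\text{ and every }S\subseteq\mathbb R.
$$
This equivalence is itself immediate: if $x\in S+t$ then $x=s+t$ for some $s\in S$, whence $x-t=s\in S$; conversely, if $x-t\in S$ then $x=(x-t)+t\in S+t$.

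First I would prove \eqref{ct}. For an arbitrary $x\in\mathbb R$ we have the chain $x\in(A+t)^c\iff x\notin A+t\iff x-t\notin A\iff x-t\in A^c\iff x\in A^c+t$, where the second equivalence applies the displayed fact to $S=A$ and the fourth applies it to $S=A^c$. Since the two extreme conditions agree for every $x$, the sets $(A+t)^c$ and $A^c+t$ coincide.

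Next, \eqref{cupt} follows from the same equivalence, now combined with the observation that the quantifier ``there exists $i\in I$'' passes freely through it: $x\in\bigcup_{i\in I}(A_i+t)\iff\exists\,i\in I\ \text{with}\ x\in A_i+t\iff\exists\,i\in I\ \text{with}\ x-t\in A_i\iff x-t\in\bigcup_{i\in I}A_i\iff x\in\bigl(\bigcup_{i\in I}A_i\bigr)+t$. Finally, \eqref{capt} can be obtained either by repeating this argument verbatim with ``$\exists$'' replaced by ``$\forall$'', or, more economically, by applying \eqref{ct} and \eqref{cupt} to the complements $A_i^c$ and invoking De Morgan: $\bigcap_{i\in I}(A_i+t)=\bigcap_{i\in I}\bigl((A_i^c)^c+t\bigr)=\bigcap_{i\in I}(A_i^c+t)^c=\Bigl(\bigcup_{i\in I}(A_i^c+t)\Bigr)^c=\Bigl(\bigl(\bigcup_{i\in I}A_i^c\bigr)+t\Bigr)^c=\bigl(\bigcup_{i\in I}A_i^c\bigr)^c+t=\bigl(\bigcap_{i\in I}A_i\bigr)+t$.

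There is no genuine obstacle in this proof; the only point deserving care is that the basic equivalence $x\in S+t\iff x-t\in S$ relies on $\tau_t$ being \emph{injective}, not merely a function — a general map $f$ need not satisfy $f(A^c)=f(A)^c$ or $f\bigl(\bigcap_i A_i\bigr)=\bigcap_i f(A_i)$ — which is precisely why Lemma \ref{trans} is a statement about translations rather than about arbitrary set maps.
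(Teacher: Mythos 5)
Your proof is correct and follows essentially the same elementwise membership-chase the paper uses for all three identities; you simply make the underlying equivalence $x\in S+t\iff x-t\in S$ explicit as a preliminary lemma, and you note the optional De Morgan derivation of \eqref{capt} from \eqref{ct} and \eqref{cupt}, which the paper instead proves directly. The closing remark about injectivity being the operative property is a nice observation but does not change the substance of the argument.
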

\begin{proof}

    For \eqref{ct}, $x\in (A+t)^c$ if and only if $x\notin A+t$, if and only if $x-t\notin A$, if and only if $x-t\in A^c$, if and only if $x\in A^c+t$.
    
    For \eqref{cupt}, $x\in \cup_{i\in I} A_i+t$ if and only if there is $i\in I$ such that $x\in A_i+t$, if and only if there is $i\in I$ such that $x-t\in A_i$, if and only if $x-t\in \cup_{i\in I}A_i$, if and only if $x\in (\cup_{i\in I}A_i)+t$.
    
    For \eqref{capt}, $x\in \cap_{i\in I} A_i+t$ if and only if for all $i\in I$ we have $x\in A_i+t$, if and only if for all $i\in I$ we have $x-t\in A_i$, if and only if $x-t\in \cap_{i\in I}A_i$, if and only if $x\in (\cap_{i\in I}A_i)+t$.

\end{proof}

\subsection{Density of sets}
We adopt the following notation.
\begin{itemize}
    \item Given any interval $I\subseteq \mathbb R$, we use $|I|$ to denote its length. Any interval in this paper will be nondegenerate, that is, $|I|>0$.
    \item Given any set $S\subseteq \mathbb R$, we use $\overline{S}$ to denote its closure and $\mathrm{Int}(S)$ to denote its interior, both with respect to the standard topology on $\mathbb R$.
    \item If $K\subseteq \mathbb R$ is a closed interval, we say a set $S\subseteq K$ is dense in $K$ if for each open interval $I\subseteq K$ we have $I\cap K\neq \varnothing$. Equivalently, $S\subseteq K$ is dense in $K$ if $\overline{S}=K$.
    \item We say a set $A\subseteq \mathbb R$ is nowhere dense if $\mathrm{Int}(\overline{A})=\varnothing$. We say a set is somewhere dense if its closure contains an interval.
\end{itemize}  
We state the following lemma in real number theory.
\begin{lem}\label{lem1}
    The followings are equivalent.
    \begin{enumerate}
        \item \label{dense1} $A\subseteq \mathbb R$ is nowhere dense.
        \item \label{dense2} For each closed interval $K\subseteq \mathbb R$ there is an open subinterval $I\subseteq K$ such that $I\subseteq A^c$.
        \item \label{dense3} $A$ is not somewhere dense, that is, $\overline A$ contains no interval.
    \end{enumerate}
    As a corollary, If $A$ and $B$ are nowhere dense, then so is $A\cup B$.
\end{lem}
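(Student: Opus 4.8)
The plan is to prove the chain of implications $\eqref{dense1}\Rightarrow\eqref{dense3}\Rightarrow\eqref{dense2}\Rightarrow\eqref{dense1}$, with the equivalence $\eqref{dense1}\Leftrightarrow\eqref{dense3}$ being essentially a matter of unwinding definitions. Indeed, $\mathrm{Int}(\overline A)=\varnothing$ says precisely that $\overline A$ contains no nonempty open set; since every nondegenerate interval contains a nonempty open subinterval and, conversely, every nonempty open subset of $\mathbb R$ contains a nondegenerate interval, this is the same as asserting that $\overline A$ contains no interval, which is \eqref{dense3}.

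For $\eqref{dense3}\Rightarrow\eqref{dense2}$, let $K\subseteq\mathbb R$ be a closed interval. Its interior $\mathrm{Int}(K)$ is a nonempty interval, so by \eqref{dense3} it is not contained in $\overline A$; pick $x\in\mathrm{Int}(K)\setminus\overline A$. Since $\overline A$ is closed, $x$ has an open neighbourhood $U$ with $U\cap\overline A=\varnothing$, and then $I:=U\cap\mathrm{Int}(K)$ is an open subinterval of $K$ with $I\cap A\subseteq I\cap\overline A=\varnothing$, i.e. $I\subseteq A^c$, as required.

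For $\eqref{dense2}\Rightarrow\eqref{dense1}$ I would argue by contraposition: if $\mathrm{Int}(\overline A)\neq\varnothing$, then $\overline A$ contains a nondegenerate closed interval $K$, and applying \eqref{dense2} to this $K$ yields a nonempty open interval $I\subseteq K\subseteq\overline A$ with $I\subseteq A^c$. But an open set contained in $A^c$ is disjoint from $A$, hence disjoint from $\overline A$, contradicting $\varnothing\neq I\subseteq\overline A$. The one point that needs care here — and the only real subtlety in the lemma — is that one must use the openness of $I$ to upgrade $I\cap A=\varnothing$ to $I\cap\overline A=\varnothing$; everything else is routine point-set topology on $\mathbb R$.

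Finally, the corollary follows cleanly from characterisation \eqref{dense2}. Given a closed interval $K$, apply \eqref{dense2} to $A$ to obtain an open subinterval $I_1\subseteq K$ with $I_1\subseteq A^c$; then choose a nondegenerate closed interval $K_1\subseteq I_1$ and apply \eqref{dense2} to $B$ to obtain an open subinterval $I\subseteq K_1$ with $I\subseteq B^c$. Since $I\subseteq I_1\subseteq A^c$ as well, we get $I\subseteq A^c\cap B^c=(A\cup B)^c$, so $A\cup B$ satisfies \eqref{dense2} and is therefore nowhere dense.
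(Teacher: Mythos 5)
Your argument is correct, but you cycle through the three conditions in the opposite direction from the paper: you establish \eqref{dense1}$\Leftrightarrow$\eqref{dense3} directly by unwinding the definition of interior, then \eqref{dense3}$\Rightarrow$\eqref{dense2} by picking a point of $\mathrm{Int}(K)$ outside the closed set $\overline A$, then \eqref{dense2}$\Rightarrow$\eqref{dense1} by contraposition. The paper instead proves \eqref{dense1}$\Rightarrow$\eqref{dense2}$\Rightarrow$\eqref{dense3}$\Rightarrow$\eqref{dense1}, using a density-of-$A\cap K$ contradiction for the first arrow. The two decompositions are of equal length, but yours localises the one genuine point of content — that an open $I$ with $I\cap A=\varnothing$ in fact has $I\cap\overline A=\varnowhere$, which is where openness earns its keep — and states it explicitly. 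In fact you silently repair a small slip in the paper's own \eqref{dense2}$\Rightarrow$\eqref{dense3} step, which writes ``$A\supseteq K$'' where it means $\overline A\supseteq K$; as written the paper's deduction $I\subseteq A^c\subseteq K^c$ does not follow, and the correct fix is precisely the openness observation you isolate. Your proof of the corollary via characterisation \eqref{dense2} (shrink $I_1$ to a closed subinterval, apply \eqref{dense2} again to $B$) is essentially the paper's argument verbatim.

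(One cosmetic note: replace \verb|\varnowhere| above with \verb|\varnothing|; that was a typo in my own sentence, not in your proof. In your \eqref{dense3}$\Rightarrow$\eqref{dense2} step you should also say explicitly that the neighbourhood $U$ of $x$ can be taken to be an open interval, so that $I=U\cap\mathrm{Int}(K)$ is again an open interval rather than merely an open set; this is immediate but worth one clause since the target statement asks for a subinterval.)
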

\begin{proof}
\begin{itemize}
    \item {\it \eqref{dense1} implies \eqref{dense2}.} Let $A\subseteq \mathbb R$ be nowhere dense. Assume, towards contradiction, that there is a closed interval $K\subseteq \mathbb R$ such that for all open intervals $I\subseteq K$, we have $I\cap A\neq \varnothing$. Then $\varnothing\neq I\cap A=(I\cap K)\cap A=I\cap (A\cap K)$. Since $I\subseteq K$ is arbitrary, by definition of density, $A\cap K$ is dense in $K$. Hence $\mathrm{Int}(\overline{A})\supseteq \mathrm{Int}(\overline{A\cap K})=\mathrm{Int}(K)\neq \varnothing$, which is a contradiction. Hence for any closed interval $K\subseteq \mathbb R$ there is some open subinterval $I\subseteq K$ such that $I\subseteq A^c$.
    
    \item {\it \eqref{dense2} implies \eqref{dense3}.} Suppose for any closed interval $K\subseteq \mathbb R$ there is some open subinterval $I\subseteq K$ such that $I\subseteq A^c$. Suppose, towards contradiction, that $A$ is somewhere dense. Then $\overline A$ contains an interval, which in turn contains some closed interval $K$. By assumption, there is some open interval $I\subseteq K$ such that $I\subseteq A^c$. But $A\supseteq K$, so $A^c\subseteq K^c$, so $I\subseteq K^c$. But since $I\subseteq K$, this is a contradiction.
    
    \item {\it \eqref{dense3} implies \eqref{dense1}.} We prove the contrapositive, that is, assuming $A$ is not nowhere dense, we are going to prove that $A$ is somewhere dense. Since $A$ is not nowhere dense, we have $\mathrm{Int}(\overline{A})\neq \varnothing$. As $\mathrm{Int}(\overline{A})$ is an open set, it contains an open interval $I$. Thus $I\subseteq \mathrm{Int}(\overline{A})\subseteq \overline A$, so $A$ is somewhere dense.
\end{itemize}
Now we prove the corollary. Let $A$ and $B$ be nowhere dense. By equivalence of \eqref{dense1} and \eqref{dense2}, we will show that for any closed interval $K\subseteq \mathbb R$ there is an open interval $I\subseteq K$ such that $I\subseteq (A\cup B)^c$. Now given any closed interval $K\subseteq \mathbb R$. Since $A$ is nowhere dense, by equivalence of \eqref{dense1} and \eqref{dense2} again, there is an open interval $I'\subseteq K$ such that $I'\subseteq A^c$. But $I'$ contains some closed interval $K'$. Since $B$ is nowhere dense, applying \eqref{dense2} to $K'$ gives an open interval $I\subseteq K'$ such that $I\subseteq B^c$. But $K'\subseteq I'\subseteq A^c$, so $I\subseteq A^c$. Hence $I\subseteq K$ is an open interval such that $I\subseteq A^c\cap B^c=(A\cup B)^c$, so $A\cup B$ is nowhere dense.
\end{proof}

\subsection{Two useful notations}\label{2not}
For our future use, it is convenient to introduce the following notations:
\begin{itemize}
    \item If $I$ is an interval with endpoints $-\infty<a<b<\infty$, we define $I^*:=[a,b)$. If $O$ is a union of intervals $I_n$ with endpoints $-\infty<a_n<b_n<\infty$ such that $\overline{I}_n\cap \overline{I}_{n'}=\varnothing$ for $n\neq n'$, we further define $O^*:=\cup_{n}I_n^*=\cup_n [a_n,b_n)$. (Note that by the Lindel\"of property of $\mathbb R$, such union is necessarily countable or finite.)
    \item For any set $S\subseteq \mathbb R$ and any $r>0$, we write $B_-(S,r)$ for the left $r$-neighbourhood of the set $S$: $B_-(S,r):=\{x-t:x\in S,\,\,0\leq t< r\}$. 
\end{itemize}
We list here some elementary properties we shall use.
    \begin{prop}\label{Bminus}
    \begin{enumerate}[label=(\roman*)]
        \item \label{not1} If $S=(a,b)$, then for each $r>0$, $B_-(S,r)=(a-r,b)$. In particular, $B_-(S,r)\supseteq [a,b)=S^*$.
        \item \label{not3} For any index set $I$ and any $r>0$, $\cup_{i\in I}B_-(S_i,r)=B_-(\cup_{i\in I}S_i,r)$.
        \item \label{not1a} If $S$ is a (countable or finite) union of bounded open intervals with disjoint closures, then for any $r>0$, $B_-(S,r)\supseteq S^*$.
        \item \label{not2} If $S_2\supseteq S_1$, then for any $r>0$, $B_-(S_2,r)\supseteq B_-(S_1,r)$.
        \item \label{not4} If $r<s$, then for any set $S$, $B_-(S,r)\subseteq B_-(S,s)$.
    \end{enumerate}
    \end{prop}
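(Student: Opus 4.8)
The plan is to verify all five assertions directly from the definition $B_-(S,r)=\{x-t:x\in S,\ 0\le t<r\}$, handling \ref{not2}, \ref{not4} and \ref{not3} first since they are immediate, then proving \ref{not1}, and finally deducing \ref{not1a} from \ref{not1} and \ref{not3}.

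For \ref{not4}: since $r<s$ we have $[0,r)\subseteq[0,s)$, so every difference $x-t$ with $x\in S$ and $t\in[0,r)$ is also such a difference with $t\in[0,s)$; hence $B_-(S,r)\subseteq B_-(S,s)$. For \ref{not2}: if $S_1\subseteq S_2$ then every $x\in S_1$ lies in $S_2$, so again $B_-(S_1,r)\subseteq B_-(S_2,r)$. For \ref{not3}: a point $y$ lies in $\bigcup_{i\in I}B_-(S_i,r)$ iff $y=x-t$ for some $i\in I$, some $x\in S_i$, and some $t\in[0,r)$, which holds iff $y=x-t$ for some $x\in\bigcup_{i\in I}S_i$ and some $t\in[0,r)$, i.e. iff $y\in B_-(\bigcup_{i\in I}S_i,r)$; so the two sets coincide. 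Each of these is a one-line bookkeeping argument.

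The one assertion requiring a genuine, if short, argument is \ref{not1}. Fix $S=(a,b)$ and $r>0$. For the inclusion $B_-(S,r)\subseteq(a-r,b)$: if $y=x-t$ with $a<x<b$ and $0\le t<r$, then $y\le x<b$ and $y=x-t>x-r>a-r$. For the reverse inclusion I would split on the location of $y$. If $a<y<b$, take $x=y\in S$ and $t=0$. If $a-r<y\le a$, then $a-y<r$, so I can pick $\varepsilon>0$ with $\varepsilon<r-(a-y)$ and, shrinking $\varepsilon$ further if necessary so that $a+\varepsilon<b$, set $x=a+\varepsilon\in S$; then $t:=x-y=(a-y)+\varepsilon$ satisfies $0\le t<r$, whence $y=x-t\in B_-(S,r)$. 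This gives $B_-(S,r)=(a-r,b)$, and since $r>0$ we have $(a-r,b)\supseteq[a,b)=S^*$, which is the ``in particular'' clause.

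Finally, \ref{not1a} is a formal consequence of the others: writing $S=\bigcup_n I_n$ as in the statement, \ref{not3} gives $B_-(S,r)=\bigcup_n B_-(I_n,r)$, and the ``in particular'' part of \ref{not1} gives $B_-(I_n,r)\supseteq I_n^*$ for each $n$, so $B_-(S,r)\supseteq\bigcup_n I_n^*=S^*$ by the definition of $S^*$. I do not expect any real obstacle here; the only place that demands a moment's care is the reverse inclusion in \ref{not1}, where one must remember that the points $y$ with $y\le a$ are still attained (by taking $x$ just to the right of $a$) while the right endpoint $b$ is not.
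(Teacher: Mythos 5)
Your proof is correct and follows essentially the same route as the paper: part \ref{not1} is handled by the same two-sided inclusion with the same case split on whether $y\in(a,b)$ or $a-r<y\le a$, part \ref{not3} and \ref{not4} are the same one-line unwindings of the definition, and part \ref{not1a} is deduced from \ref{not3} and \ref{not1} exactly as the paper does. The only cosmetic difference is in \ref{not2}, which you prove directly from the definition while the paper derives it from \ref{not3} via the decomposition $S_2=(S_2\setminus S_1)\cup S_1$; both are trivially correct.
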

\begin{proof}
\begin{enumerate}[label=(\roman*)]
    \item Let $S=(a,b)$ and $r>0$. If $y\in B_-(S,r)$, then there is $x\in S=(a,b)$ and $0\leq t<r$ such that $y=x-t$, so $y\in (a-t,b-t)\subseteq (a-r,b-0)=(a-r,b)$. Hence $B_-(S,r)\subseteq (a-r,b)$.
    
    On the other hand, if $y\in (a-r,b)$, then we have two cases:
    
         If $a<y<b$, then letting $x=y\in (a,b)$ and $t=0$ shows that $y\in B_-(S,r)$.
         
        If $a-r<y\leq a$, then we let $\delta=a-y\in [0,r)$, and let $0<\epsilon<\min\{b-a,r-\delta\}$. Then we let $x=a+\epsilon\in (a,b)=S$ and $t=x-y$. Note that $x-y>a-y\geq 0$ and $x-y=a+\epsilon-y=\delta+\epsilon<\delta+r-\delta=r$. Thus $t\in [0,r)$ and so $y=x-t\in B_-(S,r)$. 
        
        Hence $B_-(S,r)\supseteq (a-r,b)$. Combining two directions we get $B_-(S,r)= (a-r,b)$.
       
        Since $a-r<a$ for all $r>0$, we have $B_-(S,r)=(a-r,b)\supseteq [a,b)$.
    \item Let $\{S_i\}_{i\in I}$ and $r>0$. If $y\in\cup_{i\in I}B_-(S_i,r)$, then there is $i\in I$ such that $y\in B_-(S_i,r)$, that is, there is $x\in S_i$ and $0\leq t<r$ such that $y=x-t$. But $S_i\subseteq \cup_{i\in I}S_i$, so $x\in \cup_{i\in I}S_i$, and thus $y\in B_-(\cup_{i\in I}S_i,r)$. Hence $\cup_{i\in I}B_-(S_i,r)\subseteq B_-(\cup_{i\in I}S_i,r)$.
    
    On the other hand, if $y\in B_-(\cup_{i\in I}S_i,r)$, then there is $x\in \cup_{i\in I}S_i$ and $0\leq t<r$ such that $y=x-t$. Since $x\in \cup_{i\in I}S_i$, there is $i\in I$ such that $x\in S_i$. Hence $y=x-t\in B_-(S_i,r)\subseteq \cup_{i\in I}B_-(S_i,r)$. Hence $\cup_{i\in I}B_-(S_i,r)\supseteq B_-(\cup_{i\in I}S_i,r)$.

    \item Write $S=\cup_n (a_n,b_n)$. Then for each $r>0$, 
    $$
    B_-(S,r)\stackrel{\ref{not3}}{=}\bigcup_n B_-((a_n,b_n),r)\stackrel{\ref{not1}}{=}\bigcup_n (a_n-r,b_n)\stackrel{\ref{not1}}{\supseteq} \bigcup_n [a_n,b_n)=S^*.
    $$
    
    \item Since $S_2\supseteq S_1$ we can write $S_2=(S_2\backslash S_1)\cup S_1$. By \ref{not3} we have $B_-(S_2,r)=B_-(S_2\backslash S_1,r)\cup B_-(S_1,r)\supseteq B_-(S_1,r)$.
    
    \item Let $r<s$, and let $y\in B_-(S,r)$. Then there is $x\in S$ and $0\leq t<r$ such that $y=x-t$. But then $0\leq t<s$, so $y\in B_-(S,s)$. Hence $B_-(S,r)\subseteq B_-(S,s)$.
\end{enumerate}
\end{proof}

\subsection{A preliminary reduction}\label{reduction}
From the statement of Theorem \ref{thm1}, given any $\alpha_m\searrow 0$, there is $t\in \mathbb R$ and $\delta\neq 0$ such that $t+\delta \alpha_m\in E$ for all $m$. However, $\delta$ can be either positive or negative. In this subsection, we shall show that without loss of generality, it suffices to prove the case when $\delta>0$. More precisely, we consider the following condition, which is slightly stronger than Condition \eqref{condition2}:
\begin{align}\label{condition3}
        &\text{
        Given any $\alpha_m\searrow 0 $ strictly, there is $t'\in \mathbb R$ and }\text{$\delta'>0$ such that $t'+\delta'\alpha_m\in E$ for all $m$.
        }
    \end{align}
We will show that the following Proposition \ref{positive} implies Theorem \ref{thm1}. Once this is established, it suffices to prove Proposition \ref{positive}.
\begin{prop}\label{positive}
   If $B\subseteq \mathbb R$ satisfies Condition \eqref{condition3}, then $B$ is somewhere dense.
\end{prop}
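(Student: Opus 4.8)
The plan is to prove the contrapositive: assuming $B$ is not somewhere dense, I will produce a strictly decreasing sequence $\alpha_m\searrow 0$ such that $B$ contains no affine copy of it with positive dilation, contradicting Condition~\eqref{condition3}. The first step is a chain of reductions. By Lemma~\ref{lem1}, ``$B$ not somewhere dense'' means that $A:=\overline B$ is closed with empty interior, i.e. closed and nowhere dense; hence $A^c$ is open and dense, a countable disjoint union of open intervals which I will call the \emph{holes} of $A$. Since $B\subseteq A$, it suffices to find $\alpha_m\searrow 0$ with $t+\delta\{\alpha_m\}\not\subseteq A$ for every $t\in\mathbb R$ and $\delta>0$; and here it is enough to rule out $t\in A$, since a copy contained in $A$ must have $t\in A$ (let $m\to\infty$ and use closedness). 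For such $t$ the points $t+\delta\alpha_m$ decrease to $t$ from the right, and since a nowhere dense set contains no interval, $(A^c-a)\cap(0,\infty)$ is a dense open subset of $(0,\infty)$ for every $a\in A$, so holes of $A$ cluster at $a$ from the right. The goal thus reduces to: construct $\alpha_m\searrow 0$ such that for every $a\in A$ and every $\delta>0$, some $\delta\alpha_m$ lands inside a right-hole of $A$ at $a$; equivalently, by the set identities of Lemma~\ref{trans}, such that $\bigcup_{m}(A^c-\delta\alpha_m)=\mathbb R$ for every $\delta>0$.

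For the construction itself, the obstruction to overcome is that, although holes of $A$ cluster at every $a\in A$ from the right, their widths relative to their distance to $a$ may shrink to $0$ at rates depending on $a$, and $A$ is uncountable. I would deal with this by a Cantor-like recursion: starting from a fixed bounded interval and repeatedly using nowhere density to locate holes of $A$ inside the intervals produced so far, one builds scales $r_1>r_2>\cdots\to 0$ and, at each level $n$, a finite family of intervals that cover $A$ locally and whose complementary holes are quantitatively controlled at scale $r_n$, uniformly over the finitely many pieces of that level. The sequence $\{\alpha_m\}$ is then defined scale by scale: between $r_{n+1}$ and $r_n$ one inserts enough terms, with consecutive gaps small enough compared with the level-$n$ hole widths, that every dilate of a tail of the sequence which crosses that range is forced to land in one of those holes. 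This is exactly where the freedom of \emph{arbitrarily slow decay} enters: only the number of terms inserted at each scale is constrained, never the rate of decrease, which may therefore be taken as slow as one pleases. A final technical lemma then verifies that the resulting $\{\alpha_m\}$ meets $A^c$ under every $(t,\delta)$; this is the step where the notations $I^*=[a,b)$ and $B_-(\,\cdot\,,r)$ of Section~\ref{2not} are used — they encode the statement ``the gap of $\delta\{\alpha_m\}$ straddling a given scale is shorter than the hole there'' — with Proposition~\ref{Bminus} supplying the monotonicity needed to chain this estimate over all scales and all $\delta$ at once.

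The main obstacle is precisely this last coupling: a single sequence must defeat a \emph{continuum} of pairs $(t,\delta)$. The dilation $\delta$ ranges over all of $(0,\infty)$, so a sequence adapted only to a discrete (say dyadic) family of scales need not work at scales in between; and the translation $t$ ranges over the uncountable set $A$. Both difficulties are pushed into the Cantor-like construction, which must extract from an \emph{arbitrary} nowhere dense $A$ a genuinely uniform skeleton of holes, and into the scale-robustness built into the $I^*$/$B_-$ formalism, by which it is enough to catch each affine copy at the nearest available scale rather than at an exactly prescribed one.
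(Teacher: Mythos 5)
Your proposal takes essentially the same approach as the paper: contrapositive via the closure $A=\overline B$, a Cantor-like recursion that extracts holes of uniformly controlled width at each scale (this is Proposition~\ref{prop1}), a slowly decreasing $\{\alpha_m\}$ whose consecutive gaps shrink past the hole widths once the scale is fine enough (the $\mu_n/N_n$ construction of Section~\ref{alpha_m}), and a covering verification carried out in the $B_-$/$I^*$ formalism (Lemma~\ref{u1u2}, Proposition~\ref{contain}, Lemma~\ref{emp}). Your phrase ``starting from a fixed bounded interval'' glosses the one structural detail the paper must handle explicitly — $A$ may be unbounded, so Proposition~\ref{prop1} is run separately on each $E\cap[k,k+1]$ and the resulting hole-width sequences $\{l_n^{(k)}\}$ are diagonalized across $k$ via Lemma~\ref{fastest} — but this does not alter the overall strategy.
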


\subsubsection{Proof that Proposition \ref{positive} Implies Theorem \ref{thm1}}
Suppose, towards contradiction, that $E$ is not somewhere dense, i.e. $E$ is nowhere dense by equivalence of \eqref{dense1} and \eqref{dense3} of Lemma \ref{lem1}. Let $B=E\cup (-E)$. Since $E$ is nowhere dense, so is $-E$. By the corollary stated at the end of Lemma \ref{lem1}, $B$ is nowhere dense.

To use Proposition \ref{positive}, we check that $B$ satisfies Condition \eqref{condition3}. Let $\alpha_m\searrow 0$ strictly. Since $E$ satisfies Condition \ref{condition2}, there is $\delta\neq 0$ and $t\in \mathbb R$ such that $t+\delta\alpha_m\in E$ for all $m$. If $\delta>0$, then $t+\delta\alpha_m\in E\subseteq B$; if $\delta<0$, then $-t+(-\delta)\alpha_m\in -E\subseteq B$, so in either case, $B$ satisfies Condition \eqref{condition3}.

By Proposition \ref{positive}, $B$ is somewhere dense, which is a contradiction by equivalence of \eqref{dense1} and \eqref{dense3} of Lemma \ref{lem1} as we showed above that $B$ is nowhere dense.

{\bf Remark: }To avoid excessive use of extra terminology, from now on we will not be referring to Proposition \ref{positive} itself in the subsequent argument. Instead, we will assume without loss of generality that $\delta>0$ in the assumption of Theorem \ref{thm1}.

\section{A Cantor-like Construction}
The main idea of proving Theorem \ref{thm1} is by contradiction. To achieve the contradiction, we will assume that $E$ is nowhere dense, and construct a Cantor-like set containing $E$. At each level of construction of the Cantor set, we are removing intervals with specific lengths from the middle thirds of the remaining intervals. We then construct a slowly decreasing sequence $\{\alpha_m\}$, with rate of decrease depending on the lengths of the removed intervals, such that $E$ contains no affine copy of $\{\alpha_m\}$. This construction will be the key to our proof of Theorem \ref{thm1}.

We will use the following standard notations and definitions:

\subsection{The main construction}One of the main steps in the proof of Theorem \ref{thm1} is the following Cantor-type construction.
\begin{prop}\label{prop1}
Let $A\subseteq [0,1]$ be nowhere dense. Then there is a countable collection of open sets $\{O_n:n\geq 1\}$ and a countable collection of closed intervals $\{K_{n,j}:n\geq 1,1\leq j\leq 2^n\}$, with the following properties:
\begin{enumerate}[label=(\alph*)]
\item \label{cond1} $A\subseteq [0,1]\backslash (\cup_{i=1}^n O_i)$ for each $n\geq 1$.
\item \label{cond2}$\overline{O}_n\cap \overline{O}_{n'}=\varnothing$ for all $n\neq n'$.
\item \label{cond3}Each $O_n$ is of the form
\begin{equation}\label{star0}   O_n=\bigcup_{j=1}^{2^{n-1}}I_{n,j}, 
\end{equation}
where for each $n$, $\{I_{n,j}:1\leq j\leq 2^{n-1}\}$ is a collection of open intervals of the same length (denoted by $l_n$) with disjoint closures. Without loss of generality, $l_n$ can be chosen to be decreasing to $0$ such that $l_n^{-1}\in \mathbb N$.
\item \label{cond4} For each $n$, $[0,1]\backslash \cup_{i=1}^n O_i$ is a disjoint union of $2^n$ closed intervals, which we denote as $\{K_{n,j}:1\leq j\leq 2^n\}$ from left to right. They obey the relation $[0,1]\backslash \cup_{i=1}^n O_i=\cup_{j=1}^{2^n}K_{n,j}$, or equivalently, $[0,1]\backslash \cup_{i=1}^n \overline{O}_i=\cup_{j=1}^{2^n}\mathrm{Int}(K_{n,j})$. In addition, $|K_{n,j}|<(2/3)^n$ for each $n$ and each $1\leq j\leq 2^n$.
\end{enumerate}
As a consequence, 
\begin{equation}\label{Asubset}
    A\subseteq [0,1]\backslash \left(\bigcup_{n=1}^\infty O_n\right)=\bigcap_{n=1}^\infty \bigcup_{j=1}^{2^n}K_{n,j}.
\end{equation}
\end{prop}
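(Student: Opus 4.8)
The plan is to prove Proposition~\ref{prop1} by induction on $n$, constructing the levels of a Cantor-type set one at a time; it is convenient to start the recursion at a base level $n=0$ by declaring $K_{0,1}:=[0,1]$ (this level is not recorded in the statement but initializes everything). The only structural input is the equivalence of \eqref{dense1} and \eqref{dense2} in Lemma~\ref{lem1}: since $A$ is nowhere dense, \emph{every} closed interval $K\subseteq[0,1]$ contains an open subinterval $I$ with $I\subseteq A^c$.

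\textbf{The inductive step.} Suppose we have pairwise disjoint closed intervals $K_{n,1},\dots,K_{n,2^n}$, listed left to right, with $[0,1]\setminus\bigcup_{i=1}^nO_i=\bigcup_jK_{n,j}$ and $|K_{n,j}|\le(2/3)^n$ for every $j$ (for $n=0$ this just says $K_{0,1}=[0,1]$ and no $O_i$ has been chosen yet), and with all data constructed so far satisfying \ref{cond1}--\ref{cond4}. For each $j$ let $M_{n,j}$ be the open middle third of $K_{n,j}$, so that $\overline{M_{n,j}}\subseteq\mathrm{Int}(K_{n,j})$. By Lemma~\ref{lem1} pick an open interval $J_j\subseteq M_{n,j}$ with $J_j\cap A=\varnothing$, and set $\ell:=\min_{1\le j\le 2^n}|J_j|>0$. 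Choose an integer $N$ so large that $1/N<\min\{\ell,\,l_n,\,2^{-n}\}$ (omit the entry $l_n$ when $n=0$), put $l_{n+1}:=1/N$, and inside each $J_j$ select an open subinterval $I_{n+1,j}$ of length exactly $l_{n+1}$. Set $O_{n+1}:=\bigcup_{j=1}^{2^n}I_{n+1,j}$. Deleting $I_{n+1,j}$ from $K_{n,j}$ leaves two nondegenerate closed intervals (nondegeneracy holds because $I_{n+1,j}$ lies strictly inside the middle third, hence strictly inside $\mathrm{Int}(K_{n,j})$); relabelling the resulting $2^{n+1}$ closed intervals from left to right as $K_{n+1,1},\dots,K_{n+1,2^{n+1}}$ completes the step.

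\textbf{Verification.} Condition \ref{cond3} and the normalization of the $l_n$ are built in: the $I_{n+1,j}$ share the common length $l_{n+1}$, $l_{n+1}^{-1}=N\in\mathbb N$, and $l_{n+1}<l_n$ with $l_{n+1}<2^{-n}$, so $l_n\downarrow 0$. For \ref{cond1}, $O_{n+1}\subseteq\bigcup_jJ_j\subseteq A^c$, so $A\subseteq[0,1]\setminus\bigcup_{i=1}^{n+1}O_i$ by the inductive hypothesis. For \ref{cond2}, each $\overline{I_{n+1,j}}\subseteq\overline{M_{n,j}}\subseteq\mathrm{Int}(K_{n,j})$, while $\bigcup_j\mathrm{Int}(K_{n,j})$ is disjoint from $\bigcup_{i=1}^n\overline{O}_i$ by the inductive form of \ref{cond4}; together with pairwise disjointness of the $K_{n,j}$ this shows $\overline{O}_{n+1}=\bigcup_j\overline{I_{n+1,j}}$ is disjoint from each earlier $\overline{O}_i$ and has pairwise disjoint pieces. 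For \ref{cond4}: writing $K_{n,j}=[a,a+L]$ with $L\le(2/3)^n$, deleting an open interval contained in $[a+L/3,a+2L/3]$ leaves two closed intervals of length $<2L/3\le(2/3)^{n+1}$; and the reformulation $[0,1]\setminus\bigcup_{i=1}^{n+1}\overline{O}_i=\bigcup_j\mathrm{Int}(K_{n+1,j})$ follows because the bounded gaps between consecutive intervals of $\bigcup_jK_{n+1,j}$ are precisely the removed open intervals $\bigcup_{i=1}^{n+1}O_i$. Finally, intersecting \ref{cond1} over $n$ and using De Morgan gives $A\subseteq\bigcap_n\big([0,1]\setminus\bigcup_{i=1}^nO_i\big)=[0,1]\setminus\bigcup_nO_n=\bigcap_n\bigcup_{j=1}^{2^n}K_{n,j}$, which is \eqref{Asubset}.

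\textbf{Where the difficulty lies.} There is no serious obstacle --- this is a controlled Cantor construction --- but one point needs care: making the removed intervals at a fixed level have a \emph{common} length $l_{n+1}$ with $l_{n+1}^{-1}\in\mathbb N$, while keeping each of them inside the appropriate middle third and inside $A^c$. This is exactly why we first secure an $A$-free open interval $J_j$ inside \emph{every} $K_{n,j}$, then pass to the minimum length $\ell$, and only afterwards shrink all of them to the common value $1/N$ for $N$ large. The remaining points --- propagation of the bound $|K_{n,j}|<(2/3)^n$, disjointness of closures, and the set identity \eqref{Asubset} --- are routine bookkeeping.
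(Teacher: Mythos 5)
Your proof is correct and follows essentially the same Cantor-type construction as the paper: at each stage, apply the nowhere-density characterization from Lemma~\ref{lem1} to find an $A$-free open interval inside the middle third of each surviving closed interval, then shrink all of them to a common length $l_{n+1}$ with $l_{n+1}^{-1}\in\mathbb N$ and $l_{n+1}$ small enough to force $l_n\downarrow 0$. The only differences are cosmetic (starting the recursion at $n=0$, using the open rather than the closed middle third, and bounding $l_{n+1}$ by $2^{-n}$ rather than by $l_n/2$), and the verification of \ref{cond1}--\ref{cond4} and \eqref{Asubset} matches the paper's.
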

\begin{proof}
We construct $O_n$ inductively. In the first step, by \eqref{dense2} of Lemma \ref{lem1} applied to $A$ with $K=I=[0,1]$, we can find an open interval $I_{1,1}\subseteq [1/3,2/3]$ which lies in $A^c$. Let the length of $I_{1,1}$ be $l_1$ (since we can always take a shorter interval within $I_{1,1}$, we may assume $l_1^{-1}\in \mathbb N$), and let $O_1:=I_{1,1}$. Note that $[0,1]\backslash O_1$, which contains $A$, has $2$ closed connected components, which we denote as $K_{1,1}$ and $K_{1,2}$ from left to right (See Figure \ref{fig1}). By construction, $[0,1/3]\subseteq K_{1,1}\subseteq [0,2/3)$, so $1/3\leq |K_{1,1}|<2/3$; similarly we also have $1/3\leq |K_{1,2}|<2/3$. Hence all \ref{cond1}-\ref{cond4} are satisfied for $n=1$ (\ref{cond2} being null here).
\begin{figure}[!h]
\begin{center}
\begin{tikzpicture}
\begin{axis}[
axis lines*=middle,
 axis line style=\empty,
xmin=0, xmax=1,
ymin=-1, ymax=5,
 tick style={color=black},
 xtick={0,0.333,0.37,0.58,0.667,1},
 xticklabels={$0$,$\frac 1 3$ , , ,$\frac 2 3$, $1$},
 axis y line=none,
 ytick=\empty,
 width=15cm,
  height=10cm,
 ]
 \draw[decoration={brace,raise=0.8cm,amplitude=15pt},decorate]
 (0,0) -- node[above=1.5cm] {$K_{1,1}$} (0.37,0);
	
 %\draw[decoration={brace,raise=0.8cm,amplitude=15pt},decorate]
 %(0.37,0) -- node[above=1.5cm] {$I_{1,1}=O_1$} (0.58,0);
 \addplot[line width=0.1cm, samples=100, domain=0.37:0.58]	({x}, {0});
\node[right] at (axis cs:0.41,-0.5) {$I_{1,1}=O_1$};

 \draw[decoration={brace,raise=0.8cm,amplitude=15pt},decorate]
 (0.58,0) -- node[above=1.5cm] {$K_{1,2}$} (1,0);
 
 \addplot [only marks, mark=o] table {
0.3333 0
0.6667  0
};
\end{axis}
\end{tikzpicture}

\caption{Removing an interval $I_{1,1}$ from the middle third of $[0,1]$.}\label{fig1}
\end{center}
\end{figure}
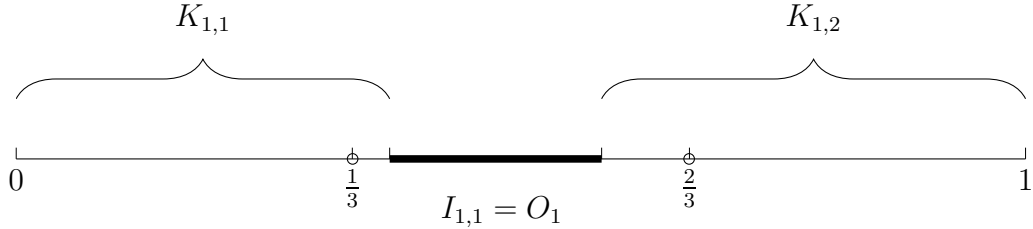

In general, at the end of the $n$-th step, we have obtained $O_n$ and hence $I_{n,j}$ and $K_{n,j}$ obeying the requirements \ref{cond1}-\ref{cond4}. In the $(n+1)$-th step, we apply \eqref{dense2} of Lemma \ref{lem1} to $A$ for each $1\leq j\leq 2^n$ with $I=K_{n,j}$ and find an open sub-interval $I_{n+1,j}$ of the closed middle third of $K_{n,j}$ contained in $A^c$. A priori the intervals $I_{n+1,j}$ may have varying lengths. If $l>0$ with $l^{-1}\in \mathbb N$ and $l\leq \min\{l_n/2,|I_{n+1,1}|,\dots,|I_{n+1,2^n}|\}$, we replace each $I_{n+1,j}$, $1\leq j\leq 2^n$ by a subinterval of length $l$, and we define $l_{n+1}=l$. By a slight abuse of notation we continue to call these smallest subintervals $I_{n+1,j}$. Thus all $I_{n+1,j}$'s now have the same lengths $l_{n+1}\leq l_n/2$, such that $l_{n+1}^{-1}\in \mathbb N$ and that $l_{n}\to 0$. 

(Refer to Figure \ref{fig2}, which demonstrates for a fixed $K_{n,j}$ two subsequent iterations. We remark here that the two solid dots denote the trisection points of $K_{n,j}=[a,b]$. Similarly, the four empty dots denote the trisection points of $K_{n+1,2j-1}$ and $K_{n+1,2j}$, respectively.)

Since for each $1\leq j\leq 2^n$, $\overline{I}_{n+1,j}$ lies in the closed middle third $\tilde K_{n,j}$ of the closed interval $K_{n,j}$, and $\{K_{n,j}:1\leq j\leq 2^n\}$ are disjoint by \ref{cond4} in the $n$-th step, we see that $\{\overline{I}_{n+1,j}:1\leq j\leq 2^n\}$ are disjoint. Furthermore, $\cup_{j=1}^{2^n}\overline{I}_{n+1,j}$ is disjoint from $\cup_{i=1}^n \overline{O}_i$ since by the $n$-th step we have 
$$\bigcup_{i=1}^n \overline{O}_i=[0,1]\backslash\bigcup_{j=1}^{2^n}\mathrm{Int}(K_{n,j})\subseteq [0,1]\backslash\bigcup_{j=1}^{2^n}\tilde K_{n,j}\subseteq [0,1]\backslash \bigcup_{j=1}^{2^n}\overline{I}_{n+1,j}.
$$
Let $O_{n+1}:=\bigcup_{j=1}^{2^n}I_{n+1,j}$ be the disjoint union of these open intervals, and by disjointness we also have $\overline{O}_{n+1}:=\cup_{j=1}^{2^n}\overline{I}_{n+1,j}$. Then we have just showed that
\begin{equation}\label{star2}
    \overline{O}_{n+1}\cap \overline{O}_{i}=\varnothing,
\end{equation}
for all $1\leq i\leq n$.

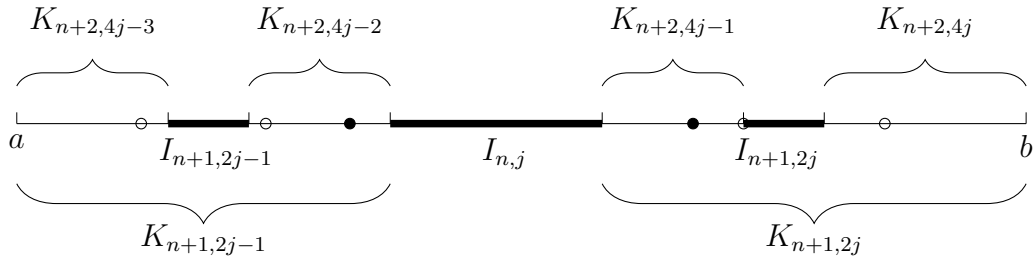
\begin{figure}[!h]
\begin{center}
\begin{tikzpicture}
\begin{axis}[
axis lines*=middle,
 axis line style=\empty,
xmin=0, xmax=1,
ymin=-4, ymax=4,
 tick style={color=black},
 xtick={0,0.15,0.23,0.37,0.58,0.72,0.8, 1},
 xticklabels={$a$, , , , , , ,$b$},
 axis y line=none,
 ytick=\empty,
 width=15cm,
  height=8cm,
 ]
 %\draw[decoration={brace,mirror,raise=1.3cm,amplitude=15pt},decorate]
 %(0,0) -- node[below=2cm] {$K_{n,j}$} (1,0);
 \draw[decoration={brace,raise=0.5cm,amplitude=10pt},decorate]
 (0,0) -- node[above=1cm] {$K_{n+2,4j-3}$} (0.15,0);
 
 %\draw[decoration={brace,mirror,raise=0.3cm,amplitude=5pt},decorate]
 %(0.15,0) -- node[below=0.6cm] {$I_{n+1,2j-1}$} (0.23,0);
 
 \addplot[line width=0.1cm, samples=100, domain=0.15:0.23]	({x}, {0});
\node[right] at (axis cs:0.13,-0.5) {$I_{n+1,2j-1}$};

 \draw[decoration={brace,raise=0.5cm,amplitude=10pt},decorate]
 (0.23,0) -- node[above=1cm] {$K_{n+2,4j-2}$} (0.37,0);

%\draw[decoration={brace,mirror,raise=0.3cm,amplitude=5pt},decorate]
 %(0.37,0) -- node[below=0.6cm] {$I_{n,j}$} (0.58,0);
 
\addplot[line width=0.1cm, samples=100, domain=0.37:0.58]	({x}, {0});
\node[right] at (axis cs:0.45,-0.5) {$I_{n,j}$};

\draw[decoration={brace,raise=0.5cm,amplitude=10pt},decorate]
 (0.58,0) -- node[above=1cm] {$K_{n+2,4j-1}$} (0.72,0);
 %\draw[decoration={brace,mirror,raise=0.3cm,amplitude=5pt},decorate]
 %(0.72,0) -- node[below=0.6cm] {$I_{n+1,2j}$} (0.8,0);
 \addplot[line width=0.1cm, samples=100, domain=0.72:0.8]({x}, {0});
 \node[right] at (axis cs:0.7,-0.5) {$I_{n+1,2j}$};
 
 \draw[decoration={brace,raise=0.5cm,amplitude=10pt},decorate]
 (0.8,0) -- node[above=1cm] {$K_{n+2,4j}$} (1,0);
 \draw[decoration={brace,mirror,raise=0.8cm,amplitude=15pt},decorate]
 (0,0) -- node[below=1.2cm] {$K_{n+1,2j-1}$} (0.37,0);
 \draw[decoration={brace,mirror,raise=0.8cm,amplitude=15pt},decorate]
 (0.58,0) -- node[below=1.2cm] {$K_{n+1,2j}$} (1,0);
 \addplot [only marks, mark=o] table {
0.1233 0
0.2467  0
0.72  0   
0.86  0
};
\addplot [only marks, mark=*] table {
0.33 0
0.67 0
};
\end{axis}
\end{tikzpicture}

\caption{Two further iterations applied to $K_{n,j}=[a,b]$ (trisection points indicated).}\label{fig2}
\end{center}
\end{figure}
We now proceed to verify conditions \ref{cond1}-\ref{cond4}. We start with \ref{cond1}. Since $A\subseteq [0,1]\backslash (\cup_{i=1}^n O_i)$ by induction hypothesis, it suffices to show that
\begin{equation}\label{star1}
    A\subseteq [0,1]\backslash O_{n+1}.
\end{equation}
However, $O_{n+1}$ was chosen as the union of intervals $I_{n+1,j}$, all of which are disjoint from $A$. Hence
\eqref{star1} follows. 

We proceed to \ref{cond2}. In view of the induction hypothesis, this would follows if we show that $\overline{O}_{n+1}\cap \overline{O}_i=\varnothing$ for $i=1,\dots,n$. But this is \eqref{star2} that we have proved.

Part \ref{cond3} follows by definition of $O_{n+1}$ and disjointness of $\{\overline{I}_{n+1,j}:1\leq j\leq 2^n\}$.

For \ref{cond4}, since up to the $n$-th step we have $2^n$ intervals $K_{n,j}$, and given $1\leq j\leq 2^n$, each $K_{n,j}\backslash I_{n,j} $ is a union of $2$ disjoint closed intervals, we see $[0,1]\backslash \cup_{i=1}^{n+1}O_i$ is a disjoint union of $2^{n+1}$ closed intervals, which we denote as $K_{n+1,j},1\leq j\leq 2^{n+1}$ from left to right.

Note that with our choice of indices, we have $K_{n,j}\backslash I_{n,j}=K_{n+1,2j-1}\cup K_{n+1,2j}$. We write $K_{n,j}=[a,b]$, $I_{n,j}=(c,d)$, then $K_{n+1,2j-1}=[a,c]$. Since $I_{n,j}$ is a subinterval of the middle third of $K_{n,j}$, we have 
$$
|K_{n+1,2j-1}|=c-a< \tfrac 2 3(b-a)=\tfrac 2 3|K_{n,j}|.
$$
By the induction hypothesis, we have $|K_{n,j}|<(2/3)^n$, so $|K_{n+1,2j-1}|<(2/3)^{n+1}$. Similarly we can show $|K_{n+1,2j}|<(2/3)|K_{n,j}|<(2/3)^{n+1}$. As this holds for all $1\le j\leq 2^n$, we see that $|K_{n+1,j|}<(2/3)^{n+1}$ for all $1\leq j\leq 2^{n+1}$.

Hence the induction closes. Lastly, letting $n\to \infty$ shows that 
\begin{align*}
  A&\subseteq [0,1]\backslash \left(\bigcup_{n=1}^\infty O_n\right)
  =[0,1] \bigcap \left(\bigcap_{n=1}^\infty O_n^c\right)
  =\bigcap_{n=1}^\infty \left([0,1]\cap O_n^c\right)
  =\bigcap_{n=1}^\infty \bigcup_{j=1}^{2^n}K_{n,j}.  
\end{align*}
\end{proof}

The proof of Proposition \ref{prop1} shows that any interval $K_{n,j}$ from the $n$-th step of the construction yields exactly two intervals $K_{n+1,2j-1}$ and $K_{n+1,2j}$ at the $n$-th step, i.e.
$$
K_{n+1,r}\subseteq K_{n,j}\quad \text{if and only if}\quad r\in \{2j-1.2j\}.
$$
Moreover, if $K_{n,j}=[a,b]$, then $a\in K_{n+1,2j-1}$, $b\in K_{n+1,2j}$.

We will refer to $K_{n+1,2j-1}$ and $K_{n+1,2j}$ as the ``children" of $K_{n,j}$. Each interval $K_{n,j}$ generates exactly $2^k$ descendants after $k$ subsequent steps. The rightmost of these intervals is $K_{n+k,2^k j}$. For fixed $n$ and $j$, as $k$ increases, the closed and bounded intervals $\{K_{n+k,2^k j}:k\geq 1\}$ form a decreasing nested sequence such that each $K_{n+k,2^k j}$, $k\geq 1$ contains the right endpoint of $K_{n,j}$, namely, $\sup K_{n,j}$. Additionally, in view of \ref{cond4}, we have $|K_{n+k,2^k j}|<(2/3)^{n+k}\to 0$. Hence the nested interval property leads to the following lemma:

\begin{lem}\label{Knj}
Fix $n\geq 1$, $1\leq j\leq 2^n$. Then
$$
\sup_{k\geq 1}(\inf K_{n+k,2^k j})=\lim_{k\to \infty}(\inf K_{n+k,2^k j})=\sup K_{n,j}.
$$
\end{lem}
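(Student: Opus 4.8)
The plan is to read the conclusion off directly from the nested interval property recorded in the remark immediately preceding the lemma. Write $b := \sup K_{n,j}$ and abbreviate $J_k := K_{n+k,2^k j}$ for $k \geq 1$. From that remark (together with part \ref{cond4} of Proposition \ref{prop1}) we have three facts at our disposal: the intervals are nested, $J_{k+1} \subseteq J_k$; each $J_k$ contains the right endpoint $b$ of $K_{n,j}$; and $|J_k| < (2/3)^{n+k}$, so $|J_k| \to 0$.

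First I would pin down $\sup J_k$ exactly. Since $J_k \subseteq K_{n,j}$ we have $\sup J_k \leq b$, while $b \in J_k$ forces $\sup J_k \geq b$; hence $\sup J_k = b$ for every $k$. As each $J_k$ is a closed interval, writing it as $[\inf J_k,\sup J_k]$ and using $|J_k| = \sup J_k - \inf J_k$ yields $\inf J_k = b - |J_k|$.

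Next I would obtain the limit by squeezing: combining $\inf J_k = b - |J_k|$ with the bound $|J_k| < (2/3)^{n+k}$ and with the trivial inequality $\inf J_k \leq b$ (again because $b \in J_k$) gives
$$
b - (2/3)^{n+k} < \inf J_k \leq b ,
$$
and since $(2/3)^{n+k} \to 0$ we conclude $\lim_{k\to\infty}\inf J_k = b$. Finally, the nesting $J_{k+1}\subseteq J_k$ gives $\inf J_{k+1} \geq \inf J_k$, so the sequence $(\inf J_k)_{k\geq 1}$ is nondecreasing and therefore $\sup_{k\geq 1}\inf J_k = \lim_{k\to\infty}\inf J_k$. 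Both displayed equalities in the statement follow at once.

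I do not expect a genuine obstacle here: the lemma is essentially a bookkeeping consequence of the construction in Proposition \ref{prop1}. The only point demanding a moment's care is the observation that $\sup J_k$ equals $b$ \emph{exactly}, rather than merely being $\leq b$; this is precisely what converts the quantitative length bound $|J_k| < (2/3)^{n+k}$ into the estimate on $\inf J_k$ that drives the squeeze.
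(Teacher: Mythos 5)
Your argument is correct and is exactly the one the paper has in mind: the text before Lemma \ref{Knj} records the nesting of $K_{n+k,2^kj}$, that each contains $\sup K_{n,j}$, and that $|K_{n+k,2^kj}|<(2/3)^{n+k}\to 0$, and then simply invokes the nested interval property. You have spelled out the same squeeze and monotonicity bookkeeping in full.
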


\subsection{Distribution of the deleted open sets}
The following set relation will be used in the last part of the proof of Lemma \ref{emp} which leads to the main theorem. Recall the left neighbourhood $B_-$ and the $I^*$ notation introduced in Section \ref{2not}.
\begin{prop}\label{contain}
   The sets $\{O_n:n\geq 1\}$ constructed in the proof of Proposition \ref{prop1} obey the following property: for $N\geq 1$,
   \begin{equation}\label{union}
\bigcup_{n=N+1}^\infty B_-\left(O_n,\left(\frac 2 3\right)^n\right)\supseteq [0,1)\backslash \left(\bigcup_{n=1}^N O_n^*\right)=\bigcup_{j=1}^{2^N}K_{N,j}^*. 
\end{equation}
In other words, the intervals $\{I_{n,j}\}$ are densely distributed; if some $x$ is not covered by any of the $O_n^*$'s up to stage $N$, then there is some $n\geq N+1$ and some $j$ so that $x$ will be within the left $(2/3)^n$-neighbourhood of $I_{n,j}$.
\end{prop}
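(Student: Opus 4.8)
\emph{Plan of proof.} I would split the proof into two parts, corresponding to the set equality $[0,1)\setminus\left(\bigcup_{n=1}^N O_n^*\right)=\bigcup_{j=1}^{2^N}K_{N,j}^*$ and to the covering inclusion \eqref{union}. The equality is a bookkeeping exercise with the half-open convention. By construction $[0,1]$ is the disjoint union of $\bigcup_{n=1}^N O_n$ and $\bigcup_{j=1}^{2^N}K_{N,j}$, the disjointness holding because each $K_{N,j}$ is a connected component of the closed set $[0,1]\setminus\bigcup_{n=1}^N O_n$ and so meets none of the removed open intervals. Passing to $[0,1)$ deletes the point $1$, which is $\sup K_{N,2^N}\in K_{N,2^N}$, thereby replacing $K_{N,2^N}$ by $K_{N,2^N}^*$; and for each $j<2^N$ the endpoint $\sup K_{N,j}<1$ is the left endpoint of exactly one removed interval, namely the one immediately to its right --- uniqueness being forced by \ref{cond2}, which makes all the intervals $I_{n,k}$ with $n\le N$ have pairwise disjoint closures, so that no two of them are adjacent. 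Reassigning each such $\sup K_{N,j}$ to that removed interval simultaneously turns every $O_n$ into $O_n^*$ and every $K_{N,j}$ into $K_{N,j}^*$, leaving precisely $[0,1)=\left(\bigcup_{n=1}^N O_n^*\right)\sqcup\left(\bigcup_{j=1}^{2^N}K_{N,j}^*\right)$, which is the asserted equality.

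For the inclusion I would argue one interval $K_{N,j}$ at a time, exploiting that $B_-$ reaches to the left of an interval. The key step is this: if $K_{n,i}=[a,b]$ has deleted middle interval $I_{n+1,i}=(c,d)$ and children $K_{n+1,2i-1}=[a,c]$ and $K_{n+1,2i}=[d,b]$, then $K_{n,i}^*=K_{n+1,2i-1}^*\sqcup I_{n+1,i}^*\sqcup K_{n+1,2i}^*$, and the first two pieces both sit inside $B_-(I_{n+1,i},(2/3)^{n+1})=(c-(2/3)^{n+1},d)$ --- for $I_{n+1,i}^*$ this is Proposition~\ref{Bminus}\ref{not1}, and for $K_{n+1,2i-1}^*=[a,c)$ it is the strict inequality $c-a=|K_{n+1,2i-1}|<(2/3)^{n+1}$ supplied by \ref{cond4}. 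Hence
\[
K_{n,i}^*\subseteq B_-\left(I_{n+1,i},(2/3)^{n+1}\right)\cup K_{n+1,2i}^*,
\]
so only the right child survives. Iterating this along the chain of rightmost descendants $K_{N+k,2^{k}j}$ of $K_{N,j}$ (the deleted middle interval inside $K_{N+k,2^{k}j}$ being $I_{N+k+1,2^{k}j}$) yields, for every $K\ge1$,
\[
K_{N,j}^*\subseteq\bigcup_{k=1}^{K}B_-\left(I_{N+k,2^{k-1}j},(2/3)^{N+k}\right)\cup K_{N+K,2^{K}j}^*.
\]
Now $K_{N+K,2^{K}j}^*=\left[\inf K_{N+K,2^{K}j},\sup K_{N,j}\right)$, and by Lemma~\ref{Knj} its left endpoint increases to $\sup K_{N,j}$, so $\bigcap_{K\ge1}K_{N+K,2^{K}j}^*=\varnothing$; consequently every point of $K_{N,j}^*$ lies in some $B_-(I_{N+k,2^{k-1}j},(2/3)^{N+k})$, i.e. $K_{N,j}^*\subseteq\bigcup_{k\ge1}B_-(I_{N+k,2^{k-1}j},(2/3)^{N+k})$. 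Taking the union over $j=1,\dots,2^N$, invoking Proposition~\ref{Bminus}\ref{not3} and \ref{not2} with $\bigcup_{j=1}^{2^N}I_{N+k,2^{k-1}j}\subseteq O_{N+k}$, and reindexing by $n=N+k$ then gives $\bigcup_{j=1}^{2^N}K_{N,j}^*\subseteq\bigcup_{n=N+1}^{\infty}B_-(O_n,(2/3)^n)$, which is \eqref{union}.

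The only step that carries genuine content is the absorption above: one must notice that the exponent in the length bound $|K_{n,j}|<(2/3)^n$ of \ref{cond4} has been chosen to match the radius $(2/3)^n$ appearing in \eqref{union}, and that this is exactly what allows the left child together with the deleted gap to fit inside the left $(2/3)^{n+1}$-neighbourhood of the gap. Once that is recognized, the recursion and its termination via Lemma~\ref{Knj} are routine, and the set-equality half, although the most tedious part of the bookkeeping, uses nothing beyond the definitions of $O_n^*$ and $K_{N,j}^*$ together with property \ref{cond2}.
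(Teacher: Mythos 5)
Your proof is correct and follows essentially the same route as the paper: fix one $K_{N,j}$, use the fact that the open gap deleted at the next stage together with the left child sits inside the appropriate left $(2/3)$-power neighbourhood of that gap, push the residual to the rightmost descendant, and kill the residual by Lemma~\ref{Knj}. The only presentational differences are cosmetic: the paper isolates the one-step estimate as Lemma~\ref{left} using the middle-third bound $c<a+\tfrac{2}{3}|K_{n,i}|$ and upgrades the radius from $\tfrac{2}{3}|K_{n,i}|$ to $(2/3)^{n+1}$ at the end via Proposition~\ref{Bminus}\ref{not4}, whereas you fold those two facts into the single inequality $|K_{n+1,2i-1}|<(2/3)^{n+1}$ from \ref{cond4}; and the paper computes the union over $k$ of adjacent half-open intervals directly as a single interval, whereas you phrase the same thing as an absorption induction whose residual $K_{N+K,2^{K}j}^*$ shrinks to the empty set, again by Lemma~\ref{Knj}. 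You also spell out the half-open bookkeeping $[0,1)\setminus\bigcup_{n\le N}O_n^*=\bigcup_j K_{N,j}^*$ in more detail than the paper, which simply asserts it; your account (reassigning each boundary point $\sup K_{N,j}$ to the adjacent deleted interval, noting that \ref{cond2} makes this adjacent interval unique) is correct and adds nothing controversial.
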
    

The proof of this proposition is based on the following simple observation.
\begin{lem}\label{left}
Let $K$ be a closed interval, and let $\tilde K$ denote its closed middle third. Then for each open interval $I\subseteq\tilde K$, we have
\begin{equation*}
B_-\left(I,\tfrac 2 3|K|\right)\supseteq [\inf K,\sup I).
\end{equation*}
\end{lem}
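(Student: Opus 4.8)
\textbf{Proof proposal for Lemma \ref{left}.}
The plan is to reduce the statement to the explicit formula for the left neighbourhood of an open interval recorded in Proposition \ref{Bminus}\ref{not1}, after which only one elementary inequality remains. First I would fix coordinates: write $K=[a,b]$, so $|K|=b-a$ and the closed middle third is $\tilde K=\bigl[a+\tfrac13(b-a),\,a+\tfrac23(b-a)\bigr]$. Writing the given open interval as $I=(c,d)$, the hypothesis $I\subseteq\tilde K$ forces
$a+\tfrac13(b-a)\le c<d\le a+\tfrac23(b-a)$.

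Next I would apply Proposition \ref{Bminus}\ref{not1} with $S=I$ and $r=\tfrac23|K|=\tfrac23(b-a)$, which gives the exact identity $B_-\bigl(I,\tfrac23|K|\bigr)=\bigl(c-\tfrac23(b-a),\,d\bigr)$. Since $\inf K=a$ and $\sup I=d$, the desired conclusion $B_-\bigl(I,\tfrac23|K|\bigr)\supseteq[\inf K,\sup I)$ becomes the inclusion $\bigl(c-\tfrac23(b-a),\,d\bigr)\supseteq[a,d)$. The right endpoints agree and are excluded on both sides, so everything hinges on the left endpoint: I must check $c-\tfrac23(b-a)<a$, i.e. $c<a+\tfrac23(b-a)$. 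But this is immediate from $c<d\le a+\tfrac23(b-a)$, and moreover it is a strict inequality, so $a$ lies strictly inside the open interval. Combined with $a<d$ (which holds since $d>c\ge a+\tfrac13(b-a)>a$), any $x\in[a,d)$ satisfies $c-\tfrac23(b-a)<a\le x<d$, hence $x\in B_-\bigl(I,\tfrac23|K|\bigr)$, completing the argument.

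I do not expect any genuine obstacle here; the lemma is purely a one-line computation once Proposition \ref{Bminus}\ref{not1} is in hand. The only point requiring a little care is bookkeeping with the half-open interval $[\inf K,\sup I)$ — ensuring that the left endpoint $a$ is genuinely captured (which needs the strictness of $c<a+\tfrac23(b-a)$, guaranteed by $c<d$) while the right endpoint $\sup I=d$ is correctly left out. This local estimate is exactly the building block needed for Proposition \ref{contain}: applied to each $K=K_{n,j}$ with $I=I_{n+1,j}$ it will be summed over $j$ using Proposition \ref{Bminus}\ref{not3} and then telescoped over $n$ using the nested-interval identity of Lemma \ref{Knj}.
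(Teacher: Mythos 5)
Your proof is correct and follows essentially the same route as the paper: both reduce the statement to the exact formula $B_-\bigl(I,\tfrac23|K|\bigr)=\bigl(c-\tfrac23(b-a),d\bigr)$ from Proposition \ref{Bminus}\ref{not1} and then verify the single inequality $c-\tfrac23(b-a)<a$ from $I\subseteq\tilde K$ (with strictness coming from $c<d$, exactly as in the paper's argument). No issues.
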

(The illustration of this lemma and the proof is shown in Figure \ref{fig3}.)
\begin{proof}
Let $K=[a,b]$ and $I=(c,d)$.  By \ref{not1} of Proposition \ref{Bminus}, we have
$$
B_-\left(I,\tfrac 2 3|K|\right)
%=\left\{x-t:x\in I,0\leq t<\frac 2 3|K|\right\}
=\left(c-\tfrac 2 3|K|,d\right).
$$
Since $I\subseteq \tilde K$, we have $c< a+2(b-a)/3$. Hence
$$
c-\tfrac 2 3|K|<a+\tfrac 2 3 (b-a)-\tfrac 2 3 (b-a)=a.
$$
Thus we have $B_-\left(I,\frac 2 3|K|\right)\supseteq [a,d)=[\inf K,\sup I)$.
\end{proof} 
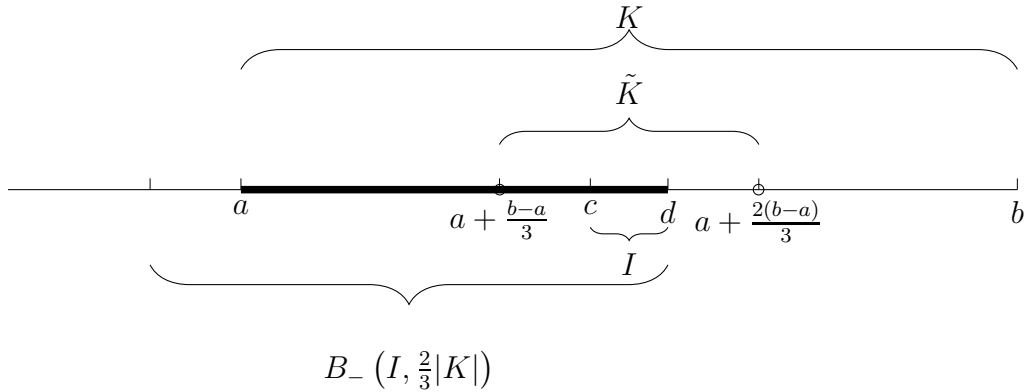
\begin{figure}[!h]
\begin{center}
\begin{tikzpicture}
\begin{axis}[
axis lines*=middle,
 axis line style=\empty,
xmin=-0.3, xmax=1,
ymin=-5, ymax=4,
 tick style={color=black},
 xtick={-0.117,0,0.333,0.45,0.55,0.667,1},
 xticklabels={,$a$, $a+\frac {b-a}3$,$c$,$d$,$a+\frac {2(b-a)}3$,$b$},
 axis y line=none,
 ytick=\empty,
 width=15cm,
  height=8cm,
 ]
 \draw[decoration={brace,raise=1.6cm,amplitude=15pt},decorate]
 (0,0) -- node[above=2cm] {$K$} (1,0);
 \draw[decoration={brace,raise=0.6cm,amplitude=10pt},decorate]
 (0.333,0) -- node[above=1cm] {$\tilde K$} (0.667,0);
 \draw[decoration={brace,mirror,raise=0.5cm,amplitude=5pt},decorate]
 (0.45,0) -- node[below=0.7cm] {$I$} (0.55,0);
 \draw[decoration={brace,mirror, raise=1cm,amplitude=15pt},decorate]
 (-0.117,0) -- node[below=2cm] {$B_-\left(I,\frac 2 3 |K|\right)$} (0.55,0);
 \addplot [only marks, mark=o] table {
0.3333 0
0.6667 0
};

\addplot[line width=0.1cm, samples=100, domain=0:0.55]({x}, {0});
\end{axis}
\end{tikzpicture}

\caption{Illustration of Lemma \ref{left}, with $[a,d)=[\inf K,\sup I)$ shaded}\label{fig3}
\end{center}
\end{figure}

Now we can give a proof of Proposition \ref{contain}.
\begin{proof}
Fix $N$. Recall that \ref{cond4} of Proposition \ref{prop1} gives that for each $N$, $[0,1)\backslash \left(\cup_{n=1}^N O_n\right)=\cup_{j=1}^{2^N}K_{N,j}$. Since $\{K_{N,j}:1\leq j\leq 2^N\}$ are disjoint, using our definition of $I^*$ for each interval $I$ introduced above, we also have $[0,1)\backslash \left(\cup_{n=1}^N O_n^*\right)=\cup_{j=1}^{2^N}K_{N,j}^*$.

Fix $N,j$ and consider a single $K_{N,j}$ (See Figure \ref{fig2} again). For $k\geq 1$, since the middle third of $K_{N+k-1,2^{k-1} j}$ contains $I_{N+k,2^{k-1} j}$, by Lemma \ref{left} applied to $K_{N+k-1,2^{k-1}j}$, we have
\begin{equation}\label{star3}
    B_-\left(I_{N+k,2^{k-1}j},\tfrac 2 3 |K_{N+k-1,2^{k-1}j}|\right)\supseteq [\inf K_{N+k-1,2^{k-1}j},\sup I_{N+k,2^{k-1}j}).
\end{equation}
Again, since $I_{N+k,2^{k-1}j}$ is deleted from $K_{N+k-1,2^{k-1}j}$ whose ``child" on the right is $K_{N+k,2^k j}$, we have
\begin{equation}\label{star4}
\sup I_{N+k,2^{k-1}j}=\inf K_{N+k,2^{k}j}.
\end{equation}

Taking union over $k\geq 1$ on both sides in \eqref{star3}, we have
\begin{align*}
 \bigcup_{k=1}^\infty B_-\left(I_{N+k,2^{k-1} j},\frac 2 3 |K_{N+k-1,2^{k-1} j}|\right)
&\supseteq\bigcup_{k=1}^\infty [\inf K_{N+k-1,2^{k-1}j},\sup I_{N+k,2^{k-1}j})\\
 (\text{by } \eqref{star4})&=\bigcup_{k=1}^\infty [\inf K_{N+k-1,2^{k-1}j},\inf K_{N+k,2^{k}j}).
\end{align*}
We observe that for each $k$, the $k$-th interval above is adjacent to the $(k+1)$-th one. As a result, the union is a single interval given by
\begin{align*}
    [\inf K_{N,j},\sup_{k\geq 1}(\inf K_{N+k,2^{k}j})).
\end{align*}
But by Lemma \ref{Knj}, $\sup_{k\geq 1}(\inf K_{N+k,2^{k}j})=\sup K_{N,j}$, so $[\inf K_{N,j},\sup_{k\geq 1}(\inf K_{N+k,2^{k}j}))=[\inf K_{N,j},\sup K_{N,j})=K_{N,j}^*$. What we have just shown is then
\begin{equation}\label{star5}
    \bigcup_{k=1}^\infty B_-\left(I_{N+k,2^{k-1} j},\frac 2 3 |K_{N+k-1,2^{k-1} j}|\right)\supseteq K^*_{N,j}.
\end{equation}

Thus the left hand side of \eqref{union} is equal to:
\begin{align*}
\bigcup_{n=N+1}^\infty B_-\left(O_n,\left(\frac 2 3\right)^n\right)
&=\bigcup_{k=1}^\infty B_-\left(O_{N+k},\left(\frac 2 3\right)^{N+k}\right)\\
(\text{by \eqref{star0} in \ref{cond3} of Proposition \ref{prop1}})&=\bigcup_{k=1}^\infty B_-\left(\bigcup_{l=1}^{2^{N+k-1}}I_{N+k,l},\left(\frac 2 3\right)^{N+k}\right)\\
 (\text{by \ref{not2} of Proposition \ref{Bminus}})&\supseteq \bigcup_{k=1}^\infty B_-\left(\bigcup_{j=1}^{2^N}I_{N+k,2^{k-1}j},\left(\frac 2 3\right)^{N+k}\right)\\
 (\text{by \ref{not3} of Proposition \ref{Bminus}})&=\bigcup_{j=1}^{2^N}\bigcup_{k=1}^\infty B_-\left(I_{N+k,2^{k-1}j},\left(\frac 2 3\right)^{N+k}\right)\\
 (\text{by \ref{cond4} of Prop. \ref{prop1} and \ref{not4} of Prop. \ref{Bminus}})&\supseteq \bigcup_{j=1}^{2^N}\bigcup_{k=1}^\infty B_-\left(I_{N+k,2^{k-1} j},\frac 2 3 |K_{N+k-1,2^{k-1} j}|\right)\\
 (\text{by \eqref{star5}})&\supseteq \bigcup_{j=1}^{2^N} K_{N,j}^*.
\end{align*}
\end{proof}

\section{Proof of Theorem \ref{thm1}}

We will prove Theorem \ref{thm1} by contradiction. Suppose $E$ is nowhere dense. For $k\in \mathbb Z$, write 
\begin{equation}\label{defEk}
    E_k=E\cap [k,k+1).
\end{equation}
Then for each $k\in \mathbb Z$, $E_k-k\subseteq [0,1]$ is nowhere dense, so we can use Proposition \ref{prop1} with $A=E_k-k\subseteq [0,1]$ to find $O_n^{(k)}\subseteq [k,k+1]$ and $I_{n,j}^{(k)}\subseteq [k,k+1]$ with lengths $l_n^{(k)}$ as specified by \ref{cond3} of Proposition \ref{prop1}.

\subsection{Constructing a slowly decreasing sequence $\{\alpha_m\}$}\label{alpha_m}
With the countable collection of sequences $\{l_n^{(k)}\}_{n=1}^\infty$ indexed by $k$, we are going to pick an extremely slowly decreasing sequence $\alpha_m\searrow 0$ depending on $\{l_n^{(k)}\}$, such that $E$ does not contain any affine copy of $\{\alpha_m\}$.

Note that for each $k$, $\{l_n^{(k)}\}$ is a sequence in $n$ that decreases to $0$, but the rate may vary for different $k$. By the following lemma, we are going to construct a strictly decreasing sequence $\{\mu_n\}$ which decreases more rapidly than $\{l_n^{(k)}\}$ for any $k$.
\begin{lem}\label{fastest}
For each $k\in \mathbb Z$, let $\{l_n^{(k)}\}_{n=1}^\infty$ with $(l_n^{(k)})^{-1}\in \mathbb N$ be strictly decreasing to $0$. Then there is a sequence $\{\mu_n\}$ with $\mu_n^{-1}\in \mathbb N$ which also decreases strictly to $0$, such that for any $k\in \mathbb Z$ and any $n\geq |k|$ we have $\mu_n\leq l_n^{(k)}$.
\end{lem}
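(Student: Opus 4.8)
The plan is to define $\{\mu_n\}$ directly as a minimum of finitely many of the given values at each stage, exploiting that the requirement $\mu_n\le l_n^{(k)}$ is only imposed once $n\ge|k|$. Concretely, for each $n\ge1$ I would set
\[
  \mu_n:=\min\bigl\{\,l_n^{(k)}:k\in\mathbb Z,\ |k|\le n\,\bigr\}.
\]
Since $\{k\in\mathbb Z:|k|\le n\}=\{-n,-n+1,\dots,n\}$ is finite and nonempty, this minimum exists, is strictly positive, and is attained at some $l_n^{(k_0)}$ with $|k_0|\le n$; in particular $\mu_n^{-1}\in\mathbb N$ because $(l_n^{(k_0)})^{-1}\in\mathbb N$. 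It then remains to check three things: the domination property, convergence to $0$, and strict monotonicity.

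The domination property is immediate: if $k\in\mathbb Z$ and $n\ge|k|$, then $k$ lies in the index set over which the minimum defining $\mu_n$ is taken, so $\mu_n\le l_n^{(k)}$. Convergence to $0$ is a squeeze argument: for every $n\ge1$ one has $0\in\{k:|k|\le n\}$, hence $0<\mu_n\le l_n^{(0)}$, and since $l_n^{(0)}\to0$ as $n\to\infty$, it follows that $\mu_n\to0$.

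The only point requiring a short argument is strict monotonicity, and here I would record the elementary fact that if $F$ is a finite nonempty set and $a_k<b_k$ for every $k\in F$, then $\min_{k\in F}a_k<\min_{k\in F}b_k$ (choose $k_1\in F$ attaining $\min_{k\in F}b_k$; then $\min_{k\in F}a_k\le a_{k_1}<b_{k_1}=\min_{k\in F}b_k$). Applying this with $F=\{k:|k|\le n\}$, $a_k=l_{n+1}^{(k)}$, $b_k=l_n^{(k)}$ — the hypothesis $a_k<b_k$ being precisely the strict decrease of each sequence $\{l_n^{(k)}\}_n$ — yields
\[
  \mu_{n+1}\le\min_{|k|\le n}l_{n+1}^{(k)}<\min_{|k|\le n}l_n^{(k)}=\mu_n,
\]
where the first inequality holds because enlarging the index set from $\{|k|\le n\}$ to $\{|k|\le n+1\}$ can only decrease the minimum. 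This closes the construction.

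I do not anticipate a genuine obstacle here; the proof is essentially a bookkeeping diagonalization. The one thing to watch is that one cannot simply take $\min$ over all $k\in\mathbb Z$ at once, since that infimum could be $0$ (the sequences $\{l_n^{(k)}\}_n$ may decay arbitrarily fast as $k$ varies) — restricting the $n$-th minimum to $|k|\le n$ is exactly what makes each stage finite while still covering every constraint eventually.
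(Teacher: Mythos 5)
Your proof is correct and is essentially identical to the paper's: you take the same definition $\mu_n=\min\{l_n^{(k)}:|k|\le n\}$, and the strict-monotonicity argument (shrink the index set, then strictly decrease each term) is the paper's chain of inequalities. The only (harmless) difference is that you spell out the $\mu_n\to 0$ step, which the paper leaves implicit.
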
	 
\begin{proof}
Let $\mu_n=\min\{l_n^{(k)}:|k|\leq n\}$. Then $\mu_n>0$ for all $n$ since $l_n^{(k)}>0$ for all $k$ and $n$. Also, $\mu_n^{-1}\in \mathbb N$.

We prove that $\{\mu_n\}$ is strictly decreasing. Indeed, let $n\geq 2$, then
\begin{align*}
    \mu_n&=\min\{l_n^{(k)}:|k|\leq n\}\\
    &\leq \min\{l_n^{(k)}:|k|\leq n-1\}\\
    &<\min\{l_{n-1}^{(k)}:|k|\leq n-1\}=\mu_{n-1},
    \end{align*}
where the strict inequality follows since for each $k$, $\{l_n^{(k)}\}$ is strictly decreasing with respect to $n$. Lastly, fix $k\geq 1$. By definition, if $n\geq |k|$, then $\mu_n=\min\{l_n^{(k)}:|k|\leq n\}\leq l_n^{(k)}$.
\end{proof}

Now we start to construct $\{\alpha_m\}$. We set $N_0:=0$ and $N_n:=\mu_n^{-1}+N_{n-1}$ for $n\geq 1$, so $N_n\in \mathbb N$ and increases strictly to $\infty$. 

We then define $\{\alpha_m\}_{m=1}^\infty$ as follows:
\begin{equation}\label{defalpha}
    \alpha_m=\frac 1 {n}-\left(\frac 1 {n}-\frac 1 {n+1}\right)\frac{m-N_{n-1}-1}{N_{n}-N_{n-1}},\quad m=N_{n-1}+1,\dots,N_{n}.
\end{equation}
That is, we set 
\begin{equation}\label{alphan}
 \alpha_1=\alpha_{N_0+1}=1, \quad\alpha_{N_1+1}=\frac 1 2,\quad \alpha_{N_2+1}=\frac 1 3,\quad\dots\quad \alpha_{N_n+1}=\frac 1 {n+1},\quad \dots,   
\end{equation}

and the choice of $\alpha_m$ for intermediate values of $m$ is decided by linearly interpolate between the two closest values, namely, $N_{n-1}+1<m<N_n+1$.

Thus
\begin{equation*}
    \alpha_m-\alpha_{m+1}=\frac {1}{n(n+1)(N_n-N_{n-1})},\quad N_{n-1}+1\leq m\leq N_n.
\end{equation*}

Since $N_n-N_{n-1}=\mu_n^{-1}$ is increasing, it follows that $\alpha_m-\alpha_{m+1}$ is decreasing. Since $\alpha_m-\alpha_{m+1}>0$, we see that $\{\alpha_m\}$ is strictly decreasing.

Refer to Figure \ref{fig4}, which shows the sequence in the case $N_1=4$ and $N_2=8$. For example, $\alpha_m$ decreases from $1$ to $1/2$ in $N_1=4$ steps of equal size $1/2*1/4=1/8$. It then decreases from $1/2$ to $1/3$ in $N_2-N_1=4$ steps of equal size $1/6*1/4=1/24$.

We claim that $A$ contains no affine copy of $\{\alpha_m\}$.

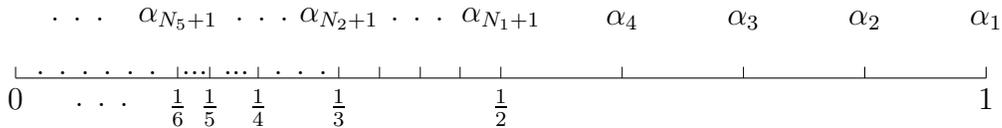
\begin{figure}[!h]\label{fig4}
\begin{center}
\begin{tikzpicture}
\begin{axis}[
axis lines*=middle,
 axis line style=\empty,
xmin=-0.02, xmax=1.02,
ymin=-5, ymax=4,
x axis line style={draw=none},
 tick style={color=black},
 xtick={0,0.167,0.2,0.25,0.333,0.375,0.417,0.458,0.5,0.625,0.75,0.875,1},
 xticklabels={$0$,$\frac 1 6$,$\frac 1 5$, $\frac 1 4$, $\frac 1 3$, , , ,$\frac 1 2$, , , ,$1$},
 axis y line=none,
 ytick=\empty,
 width=15cm,
  height=8cm,
 ]
 \addplot[samples=1000, domain=0:1] 
	({x}, {0});
\addplot [above=0.5cm]
	({1}, {0}) node {$\alpha_1$};
\addplot [above=0.5cm]
	({0.167}, {0}) node {$\alpha_{N_5+1}$};
\addplot [above=0.5cm]
	({0.333}, {0}) node {$\alpha_{N_2+1}$};
\addplot [above=0.5cm]
	({0.5}, {0}) node {$\alpha_{N_1+1}$};
\addplot [above=0.5cm]
	({0.625}, {0}) node {$\alpha_{4}$};
\addplot [above=0.5cm]
	({0.75}, {0}) node {$\alpha_{3}$};
\addplot [above=0.5cm]
	({0.875}, {0}) node {$\alpha_{2}$};
\node[right] at (axis cs:0.05,-0.5) { . . .};
\node[right] at (axis cs:0.025,1.1) { . . .};
\node[right] at (axis cs:0.215,1.1) { . . .};
\node[right] at (axis cs:0.375,1.1) { . . .};
\node[right] at (axis cs:0.01,0.1) {.  .  . . . .};
\node[right] at (axis cs:0.16,0.1) {...};
\node[right] at (axis cs:0.203,0.1) {...};
\node[right] at (axis cs:0.255,0.1) {. . .};
\end{axis}
\end{tikzpicture}
\caption{$\{\alpha_m\}$ when $N_1=4$, $N_2=8$}
\end{center}
\end{figure}

In order to achieve a contradiction, we will prove the following lemma:
\begin{lem}\label{emp}
Let $\{\alpha_m:m\geq 1\}$ be the sequence defined in \eqref{defalpha}. For every $k\geq 1$, $E_k$ denotes the set in \eqref{defEk}. Then for every $\delta>0$ and $m_0\geq 1$, we have
   \begin{equation}\label{empty0}
   [0,1) \bigcap\left(\bigcap_{m=m_0}^\infty (E_k-k)-\delta \alpha_m\right) =\varnothing.
\end{equation}
\end{lem}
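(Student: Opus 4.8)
The plan is to argue by contradiction. Suppose some $x\in[0,1)$ lies in $\bigcap_{m\ge m_0}\bigl((E_k-k)-\delta\alpha_m\bigr)$, i.e.\ $x+\delta\alpha_m\in E_k-k$ for every $m\ge m_0$. Abbreviate $A=E_k-k$ and write $O_n=O_n^{(k)}$, $I_{n,j}=I_{n,j}^{(k)}$, $l_n=l_n^{(k)}$ for the data produced by Proposition~\ref{prop1} applied to $A$; recall $A\subseteq[0,1]$ and, by \ref{cond1} of that proposition, $A\cap O_n=\varnothing$ for every $n$. Hence it suffices to produce one $m\ge m_0$ and one $n$ with $x+\delta\alpha_m\in O_n$. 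The geometric picture is that the points $x+\delta\alpha_m$ decrease to $x$, so $x$ must sit just to the left of a deleted interval $I_{n,j}$ at arbitrarily deep levels $n$, and that $\{\alpha_m\}$ decreases so slowly that near those small scales it is dense enough to push one of the points $x+\delta\alpha_m$ into the gap.

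First I would dispatch the easy case $x\in O_n^*$ for some $n$: then $x\in[a,b)$ with $(a,b)=I_{n,j}$ for the appropriate $j$, and for all sufficiently large $m$ (in particular for some $m\ge m_0$) one has $0<\delta\alpha_m<b-x$, so $x+\delta\alpha_m\in(a,b)=I_{n,j}\subseteq O_n$, contradicting $x+\delta\alpha_m\in A$. So assume $x\notin O_n^*$ for every $n$. Fix $N$, to be chosen large below. Since $x\in[0,1)\setminus\bigcup_{n\le N}O_n^*$, Proposition~\ref{contain} yields some $n>N$ and some $j$ with $x\in B_-\bigl(I_{n,j},(2/3)^n\bigr)$; writing $I_{n,j}=(a,b)$ so that $b=a+l_n$, Proposition~\ref{Bminus} gives $B_-\bigl(I_{n,j},(2/3)^n\bigr)=\bigl(a-(2/3)^n,\,b\bigr)$. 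As $x\notin O_n^*\supseteq[a,b)$ while $x<b$, we must have $x<a$, hence $0<a-x<(2/3)^n$. It remains to locate $m\ge m_0$ with $x+\delta\alpha_m\in(a,a+l_n)$, equivalently with $\alpha_m\in(p,q)$, where $p=(a-x)/\delta$ and $q=p+l_n/\delta$; note $0<p<(2/3)^n/\delta$ and $q<\bigl((2/3)^n+l_n\bigr)/\delta$.

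The quantitative core is to choose $N$, hence $n>N$, large enough in terms of $\delta$, $k$, $m_0$ so that (i) $q<\alpha_{m_0}$, possible because $\bigl((2/3)^n+l_n\bigr)/\delta\to0$ as $n\to\infty$, and (ii) every consecutive difference $\alpha_{m'}-\alpha_{m'+1}$ with $\alpha_{m'+1}\le p$ is strictly less than $q-p=l_n/\delta$. For (ii), if $m'$ lies in block $n''$ (i.e.\ $N_{n''-1}+1\le m'\le N_{n''}$), then $\alpha_{m'}-\alpha_{m'+1}=\mu_{n''}/\bigl(n''(n''+1)\bigr)\le\mu_{n''}$ and $\alpha_{m'+1}\ge\alpha_{N_{n''}+1}=1/(n''+1)$, so $\alpha_{m'+1}\le p<(2/3)^n/\delta$ forces $n''>\delta(3/2)^n-1$; then Lemma~\ref{fastest} (which applies since $n''\ge k$ once $n$ is large) together with the doubling $l_{m+1}^{(k)}\le l_m^{(k)}/2$ from Proposition~\ref{prop1} yields $\mu_{n''}\le l_{n''}^{(k)}\le l_n\,2^{-(n''-n)}\le l_n\,2^{1+n-\delta(3/2)^n}$, which is $<l_n/\delta$ once $n$ is large, since $\delta(3/2)^n$ eventually dominates $1+n+\log_2\delta$. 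Granting (i) and (ii), let $m_1$ be the least $m\ge m_0$ with $\alpha_m<q$; by (i) one has $m_1>m_0$, so $\alpha_{m_1-1}\ge q$ by minimality, and if $\alpha_{m_1}\le p$ then $\alpha_{m_1-1}-\alpha_{m_1}\ge q-p$, contradicting (ii). Hence $\alpha_{m_1}\in(p,q)$, so $x+\delta\alpha_{m_1}\in I_{n,j}\subseteq O_n$ with $m_1\ge m_0$, contradicting $x+\delta\alpha_{m_1}\in A$. This is the desired contradiction and establishes \eqref{empty0}.

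I expect step (ii) to be the main obstacle, as it is precisely where the arbitrarily slow decay of $\{\alpha_m\}$ arranged in Section~\ref{alpha_m} is used: one has to verify that the ``relevant'' indices $m'$ — those for which $\alpha_{m'+1}$ is of size roughly $(2/3)^n/\delta$ — lie in blocks $n''$ that are \emph{exponentially} large in $n$, so that $l_{n''}^{(k)}$ is super-exponentially small in $n$ and beats the fixed factor $1/\delta$, all while keeping track that the indices in play remain $\ge m_0$.
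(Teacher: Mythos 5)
Your proof is correct, and it reaches the conclusion by a genuinely different route than the paper, although it rests on the same two core ingredients: Proposition~\ref{contain} (density of the deleted intervals) and the quantitative slowness of $\{\alpha_m\}$ built from Lemma~\ref{fastest} together with the doubling $l_{n+1}\le l_n/2$ from Proposition~\ref{prop1}. The paper argues at the level of sets: it first proves Lemma~\ref{u1u2}, identifying the overlapping tail $\bigcup_{m\ge M(n)}(I_{n,j}-\delta\alpha_m)$ as a single interval $B_-(I_{n,j},\delta\alpha_{M(n)})$, then Lemma~\ref{ddd}, giving $\alpha_{M(n)}\ge (n+1)^{-1}$ for $n$ large, then stitches these together with Proposition~\ref{contain} to show $\bigcup_{m\ge m_0}\bigcup_{n\ge1}(O_n-\delta\alpha_m)\supseteq[0,1)$, and finally passes to complements. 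You instead argue pointwise: assume $x$ lies in the intersection, use Proposition~\ref{contain} to place $x$ just to the left of a gap $I_{n,j}=(a,b)$ at a deep level $n>N$, and then directly exhibit one index $m_1\ge m_0$ with $\alpha_{m_1}\in(p,q)$, where $p=(a-x)/\delta$ and $q=p+l_n/\delta$. This bypasses Lemma~\ref{u1u2} altogether. Your step~(ii) is in substance a restatement of Lemma~\ref{ddd}: for $n$ large one has $(2/3)^n/\delta\le (n+1)^{-1}\le\alpha_{M(n)}$, so $\alpha_{m'+1}\le p<(2/3)^n/\delta$ forces $m'\ge M(n)$ and hence $\delta(\alpha_{m'}-\alpha_{m'+1})<l_n$ by the very definition of $M(n)$; you instead re-derive the bound from scratch via the block structure and $\mu_{n''}\le l_{n''}^{(k)}\le l_n 2^{-(n''-n)}$, which is essentially the computation inside the proof of Lemma~\ref{ddd}. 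The trade-off: the paper's $U_1/U_2$ structure lemma is reused in Section~6 for Theorem~\ref{thm2}, so it earns its keep; your pointwise contradiction is shorter and more elementary if one only cares about Lemma~\ref{emp}. One trivial slip: the hypothesis in Lemma~\ref{fastest} is $n''\ge|k|$, not $n''\ge k$, but this does not affect the argument since $n''\to\infty$ as $n\to\infty$.
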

The lemma will be proved in Section 5.

\subsection{Proof of Theorem \ref{thm1} assuming Lemma \ref{emp}}

Recall that at the beginning of this section, we have assumed towards contradiction that $E$ is nowhere dense and from this constructed each $E_k$ and a slowly decreasing $\{\alpha_m\}$. To achieve the required contradiction, we will show that $E$ contains no affine copy of $\{\alpha_m\}$.

Suppose, towards contradiction, that there is $t\in \mathbb R$ and $\delta\neq 0$ such that $t+\delta \alpha_m\in E$ for all $m$. Recalling the preliminary reduction in subsection \ref{reduction}, we may assume without loss of generality that $\delta>0$.

Thus there is $k\in \mathbb Z$ such that $E_k$ contains all but finitely many terms of $t+\delta \alpha_m$. Indeed, there is a unique $k\in \mathbb Z$ with $t\in [k,k+1)$. Since $t+\delta \alpha_m\searrow t$, there is $m_0=m_0(\{\alpha_m\},E)$ such that $t+\delta \alpha_m<k+1$ for all $m\geq m_0$, so $t+\delta \alpha_m\in E_k=E\cap [k,k+1]$ for all $m\geq m_0$. Equivalently, $t-k+\delta \alpha_m\in E_k-k\subseteq [0,1]$ for $m\geq m_0$. Letting $m\to \infty$ also shows that $t-k\subseteq [0,1)$. Rewriting this into set notation, we have
$$
t-k\in[0,1) \bigcap\left(\bigcap_{m=m_0}^\infty (E_k-k)-\delta \alpha_m\right),
$$
which is a contradiction to Lemma \ref{emp}. This proves Theorem \ref{thm1}.

\section{Translation of an interval}

In this section, we will prove Lemma \ref{emp}. The main ingredients of this proof are two structural results concerning the union of translation of an interval. These results are contained in Lemma \ref{u1u2} and \ref{ddd} below. The proof of Lemma \ref{emp} assuming these results appear in Section \ref{lastproof}.

Before stating the lemma, we point out a minor simplification of notation. We will temporarily drop the dependence on for every term indexed by $k$ until it becomes necessary. This helps us get rid of using excessively cumbersome notations.

To be more precise, for each $k\geq 1$, let us write $A:=E_k-k\subseteq [0,1]$, and unless otherwise specified, $O_n^{(k)}$, $I_{n,j}^{(k)}$ and $l_n^{(k)}$ (defined at the beginning of this section) will be denoted by $O_n$, $I_{n,j}$ and $l_n$, respectively.

In the new notation, \eqref{empty0} in Lemma \ref{emp} reads
\begin{equation}\label{empty}
    [0,1) \bigcap\left(\bigcap_{m=m_0}^\infty A-\delta \alpha_m\right) =\varnothing.
\end{equation}

\subsection{Structure of union of translates of an interval}\label{structure}
Fix $n$ and we examine carefully $\cup_{m=1}^\infty O_n-\delta\alpha_m$ for a large $n$. Let us recall that $O_n=\cup_{j=1}^{2^{n-1}}I_{n,j}$ from \eqref{star0} of Proposition \ref{prop1}, and fix one connected component $I_{n,j}$ of $O_n$. 

Let
\begin{equation}\label{defM}
 M(n)=M(n,m_0,\delta)=\min\{m\geq m_0:\delta(\alpha_m-\alpha_{m+1})< l_n\}.
\end{equation}
We note that $M(n)$ is finite since $\alpha_m-\alpha_{m+1}\searrow 0$. By the monotonicity of $\alpha_m-\alpha_{m+1}$, for all $m\geq M(n)$, we have $\delta(\alpha_m-\alpha_{m+1})<l_n$. It is worth noting that $M(n)$ depends $\delta$ and $m_0$, but this dependence is suppressed because the subsequent argument does not rely on the specified value of $\delta$ and $m_0$.

\begin{lem}\label{u1u2}
Let $\{\alpha_m\}_{m=1}^\infty$ be a sequence strictly decreasing to $0$ such that $\alpha_m-\alpha_{m+1}$ is also decreasing. Then for any $m_0\geq 1$ and $M(n)$ as in \eqref{defM}, we can decompose the countable union of intervals $\cup_{m=m_0}^\infty I_{n,j}-\delta \alpha_m$ into a disjoint union of $U_1$ and $U_2$, where
\begin{equation*}
    U_1=U_1(j)=\bigcup_{m=m_0}^{M(n)-1}I_{n,j}-\delta \alpha_m
\end{equation*}
    is a disjoint union of open intervals of the same length $l_n$, and
\begin{equation*}
    U_2=U_2(j)=\bigcup_{m=M(n)}^{\infty}I_{n,j}-\delta \alpha_m
\end{equation*}
is a single open interval with length $l_n+\delta \alpha_{M(n)}$ and the same right endpoint as $I_{n,j}$. Using our $B_-$ notation, this can be written as
\begin{equation}\label{u2}
    U_2=B_-(I_{n,j},\delta \alpha_m).
\end{equation}
\end{lem}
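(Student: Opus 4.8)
The statement is a concrete description of the set $\bigcup_{m=m_0}^\infty (I_{n,j}-\delta\alpha_m)$, a countable union of translates of a fixed open interval $I_{n,j}$ of length $l_n$. The translates are pushed to the left by the amounts $\delta\alpha_{m_0}>\delta\alpha_{m_0+1}>\cdots\to 0$, and the gaps between consecutive translation amounts are $\delta(\alpha_m-\alpha_{m+1})$, which by hypothesis decrease to $0$. The whole point of the index $M(n)$ is the threshold at which these gaps drop below $l_n$: for $m\ge M(n)$ consecutive translates $I_{n,j}-\delta\alpha_m$ and $I_{n,j}-\delta\alpha_{m+1}$ overlap (since they have the same length $l_n$ and their left endpoints differ by $\delta(\alpha_m-\alpha_{m+1})<l_n$), while for $m_0\le m<M(n)$ the gap is $\ge l_n$ so the translates are pairwise disjoint. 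The plan is simply to make these two observations precise and then glue.

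First I would write $I_{n,j}=(c,d)$ with $d-c=l_n$, so that $I_{n,j}-\delta\alpha_m=(c-\delta\alpha_m,\ d-\delta\alpha_m)$. For the $U_1$ part, I note that for $m_0\le m\le M(n)-1$ the left endpoints $c-\delta\alpha_m$ are strictly decreasing in $m$, and I must check the intervals are pairwise disjoint. By monotonicity of $\alpha_m-\alpha_{m+1}$, for any two indices $m<m'$ in this range we have $\delta(\alpha_m-\alpha_{m'})\ge \delta(\alpha_m-\alpha_{m+1})\ge l_n$ by the very definition \eqref{defM} of $M(n)$ (every $m\le M(n)-1$ fails the defining inequality, i.e. $\delta(\alpha_m-\alpha_{m+1})\ge l_n$); hence the translate indexed by $m'$ sits entirely to the left of the one indexed by $m$, with right endpoint $d-\delta\alpha_m$ possibly touching but at worst meeting the left endpoint $c-\delta\alpha_{m'}$... actually since $\delta(\alpha_m-\alpha_{m+1})\ge l_n$ gives $d-\delta\alpha_{m+1}=c+l_n-\delta\alpha_{m+1}\le c-\delta\alpha_m$ wait I should be careful and just say: the right endpoint of the $m'$-translate is $\le$ the left endpoint of the $m$-translate, so they are disjoint (they share at most a boundary point, and both are open, so genuinely disjoint as open sets — and in fact even disjoint as sets when the inequality is strict; the edge case of equality I will handle by noting the intervals are open). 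Thus $U_1$ is a disjoint union of open intervals each of length $l_n$.

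For the $U_2$ part, I would prove $U_2=(c-\delta\alpha_{M(n)},\ d)$ directly. The inclusion $\subseteq$ is immediate: each $I_{n,j}-\delta\alpha_m$ for $m\ge M(n)$ is $(c-\delta\alpha_m,d-\delta\alpha_m)\subseteq(c-\delta\alpha_{M(n)},d)$ since $\alpha_m\le\alpha_{M(n)}$ and $\alpha_m>0$. For $\supseteq$, I show the union has no gaps: for each $m\ge M(n)$, the left endpoint of the $(m+1)$-st translate, $c-\delta\alpha_{m+1}$, is less than the right endpoint of the $m$-th translate, $d-\delta\alpha_m=c+l_n-\delta\alpha_m$, precisely because $\delta(\alpha_m-\alpha_{m+1})<l_n$ holds for all $m\ge M(n)$ by monotonicity and the definition of $M(n)$. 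So consecutive translates overlap, the union of a nested-overlapping chain of open intervals is the open interval spanned, its left endpoint is $\inf_m(c-\delta\alpha_m)=c-\delta\alpha_{M(n)}$ (infimum attained in the limit, not as a minimum, hence the left endpoint is open) and its right endpoint is $\sup_m(d-\delta\alpha_m)=d-\lim_m\delta\alpha_m=d$, again not attained, hence open — and this is exactly the right endpoint of $I_{n,j}$. The length is $d-(c-\delta\alpha_{M(n)})=l_n+\delta\alpha_{M(n)}$. Finally $U_1$ and $U_2$ are disjoint because every interval in $U_1$ lies to the left of $c-\delta\alpha_{M(n)}$ (same argument: gap $\ge l_n$ between the $(M(n)-1)$-st and $M(n)$-th translation amounts). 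The identity \eqref{u2} $U_2=B_-(I_{n,j},\delta\alpha_{M(n)})$ then follows immediately from part \ref{not1} of Proposition \ref{Bminus}, which gives $B_-((c,d),r)=(c-r,d)$ with $r=\delta\alpha_{M(n)}$.

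**Main obstacle.** There is no deep obstacle here; the lemma is essentially a bookkeeping exercise. The one place demanding care is the edge behavior at the endpoints: verifying that the left endpoint of $U_2$ is genuinely $c-\delta\alpha_{M(n)}$ and open (the infimum of the left endpoints is approached but, since $\alpha_m$ is strictly decreasing with no minimum beyond $M(n)$... wait, $\alpha_{M(n)}$ IS attained, so the leftmost translate $(c-\delta\alpha_{M(n)},d-\delta\alpha_{M(n)})$ is actually in the union and contributes its open left endpoint $c-\delta\alpha_{M(n)}$ — so that endpoint is open simply because that one interval is open), and that the right endpoint is $d$ and open (here the sup $d$ is NOT attained by any translate since $\alpha_m>0$ strictly, but it is the sup of a union of open intervals accumulating up to it, so $d\notin U_2$ and $U_2$ is open there). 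Keeping the strict-versus-non-strict inequalities straight in the disjointness-of-$U_1$ argument (open intervals whose closures may abut) is the only other subtlety, and it is handled by openness. I expect the write-up to be short and purely computational.
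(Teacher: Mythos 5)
Your proposal is correct and follows essentially the same line of reasoning as the paper's proof: checking pairwise disjointness of the first finitely many translates via $\delta(\alpha_m-\alpha_{m+1})\ge l_n$ for $m_0\le m<M(n)$, the no-gaps property for $m\ge M(n)$ via the reverse inequality, and identifying $U_2$ with $B_-(I_{n,j},\delta\alpha_{M(n)})$. You have also correctly read $\delta\alpha_{M(n)}$ where \eqref{u2} as printed in the statement has the typo $\delta\alpha_m$ (the paper's own proof uses the corrected expression).
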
 
This lemma is illustrated in Figure \ref{fig5}. In this figure, we first fix an interval $I=I_{n,j}$ and show the relative positions of $I-\delta\alpha_m$ for different choices of $m\geq m_0$. To showcase the threshold for the overlapping phenomenon, we draw these intervals indexed by $m$ along the vertical axis.

We also remark that $U_1$ and $U_2$ again depend on $n,j$ (as well as $\delta$ and $m_0$), but we suppress the dependence for the moment since for now we will be only considering one single $I_{n,j}$. Another crucial observation is that our $M(n)$ is independent of the choice of $j$, so it works for all intervals $\{I_{n,j},1\leq j\leq 2^{n-1}\}$ in the $n$-th iteration of the construction in the proof of Proposition \ref{prop1}. In the future, we call $U_1$ the disjoint part and $U_2$ the overlapping part.

\begin{figure}[!h]
\begin{center}
\begin{tikzpicture}
\begin{axis}[
axis lines=middle,
 axis line style={->},
xmin=-22, xmax=2,
ymin=-1.5, ymax=10,
 tick style={color=black},
 xtick=\empty,
 ytick={1,2,3,4,5,6},
 xlabel=$\mathbb R$,
 ylabel=$m-m_0+1$,
 width=15cm,
  height=8cm,
]

\addplot[line width=0.1cm, samples=100, domain=-7*3:-6*3]	({x}, {0});
\addplot[line width=0.1cm, samples=100, domain=-7*3:-6*3]	({x}, {1});
\node[right] at (axis cs:-21.5,-0.75) {$I-\delta \alpha_{m_0}$};
  
\addplot[line width=0.1cm,samples=100, domain=-5*3:-4*3]({x}, {0});
\addplot[line width=0.1cm, samples=100, domain=-5*3:-4*3]({x}, {2});
\node[right] at (axis cs:-15.5,-0.75) {$I-\delta \alpha_{m_0+1}$};

\addplot[line width=0.1cm,samples=100, domain=-3.5*3:-2.5*3]({x}, {0});
\addplot[line width=0.1cm,samples=100, domain=-3.5*3:-2.5*3]({x}, {3});
\node[right] at (axis cs:-11,-0.75) {$I-\delta \alpha_{m_0+2}$};

\addplot[line width=0.1cm,samples=100, domain=-2.5*3:-1.5*3] ({x}, {4});
\addplot[line width=0.1cm, blue,samples=100, domain=-2.5*3:-4.5] ({x}, {0.15});

\addplot[line width=0.1cm,samples=100, domain=-2*3:-1*3] 
({x}, {5});
\addplot[line width=0.1cm,red,samples=100, domain=-2*3:-3] 
({x}, {-0.15});

\addplot[line width=0.15cm,green,samples=100, domain=-2*3:-4.5] 
({x}, {0});

\addplot[line width=0.1cm,samples=100, domain=-1.5*3:-0.5*3] ({x}, {6});
\node[right] at (axis cs:-3,7.5) {.  .  .};
	
\addplot[line width=0.1cm,samples=100, domain=-1.0*3:-0]({x}, {9.7});

\addplot +[mark=none] coordinates {(-7.5, 0) (-7.5, 10)};

\node[right] at (axis cs:-15,7) {$U_1$};
\node[right] at (axis cs:-4.5,7) {$U_2$};

\addplot[line width=0.1cm,samples=100, domain=-1.0*3:-0]({x}, {0}); 
\node[right] at (axis cs:-2,-0.75) {$I$};
	
\addplot[dashed, samples=100, domain=0:5]({-6}, {x}); 	
\addplot[dashed, samples=100, domain=0:4]({-4.5}, {x}); 
\addplot[dashed, samples=100, domain=0:10]({-3}, {x});

\end{axis}
\end{tikzpicture}
\caption{Structure of $\cup_{m=m_0}^\infty I_{n,j}-\delta \alpha_m$ when $M(n)=m_0+3$}\label{fig5}
\end{center}
\end{figure}
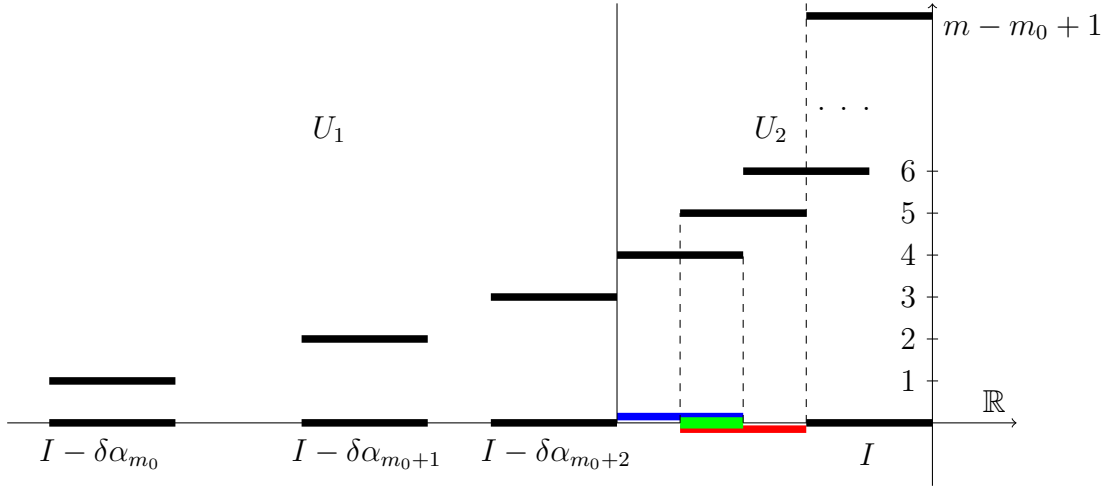

\begin{proof}[Proof of Lemma \ref{u1u2}]
As all $I_{n,j}-\delta \alpha_m$ are open intervals and $\alpha_m$ is strictly decreasing, $U_1$ is a disjoint union if and only if for each $m_0\leq m\leq M(n)-2$, we have $\sup I_{n,j}-\delta \alpha_m\leq \inf I_{n,j}-\delta \alpha_{m+1}$. This is true if and only if $\delta (\alpha_m-\alpha_{m+1})\geq \sup I_{n,j}-\inf I_{n,j}=l_n$ for all $1\leq m\leq M(n)-2$, which follows from the definition \eqref{defM} of $M(n)$. Since $\{I_{n,j}-\delta \alpha_m:1\leq m\leq M(n)-1\}$ are translates of the interval $I_{n,j}$, they have the same length $l_n$.

Since $\delta \alpha_m$ is strictly decreasing, $U_1$ and $U_2$ are disjoint if and only if $I_{n,j}-\delta \alpha_{M(n)-1}$ and $I_{n,j}-\delta \alpha_{M(n)}$ are disjoint. This is true if and only if $\delta (\alpha_{M(n)-1}-\alpha_{M(n)})\geq l_n$, which holds by \eqref{defM}.

The infinite union $U_2$ is a single open interval if and only if for each $m\geq M(n)$, we have $\sup I_{n,j}-\delta \alpha_m> \inf I_{n,j}-\delta \alpha_{m+1}$. This is true if and only if $\delta (\alpha_m-\alpha_{m+1})< \sup I_{n,j}-\inf I_{n,j}=l_n$ for all $m\geq M(n)$, which follows from \eqref{defM}.

Lastly, since $\alpha_m$ decreases strictly to $0$, $\sup I_{n,j}-\delta \alpha_m$ increases strictly to $\sup I_{n,j}$ as $m\to \infty$. Since we have shown that $U_2$ is an open interval, we have $U_2=(\inf I_{n,j}-\delta \alpha_{M(n)},\sup I_{n,j})$. By Part \ref{not1} of Proposition \ref{Bminus}, we have $U_2=B_-(I_{n,j},\delta \alpha_{M(n)})$, which is \eqref{u2}.
\end{proof}

\subsection{Slow Decay of $\{\alpha_m\}$}
In this subsection, we prove the following lemma, which is a result of the slow decay of $\{\alpha_m\}$.
\begin{lem}\label{ddd}
Let $k\geq 1$. Then there is $n_0=n_0(k,\delta,m_0)$ such that
\begin{equation}\label{ddd1}
    \alpha_{M(n)}\geq (n+1)^{-1},\quad \text{for all } n\geq n_0.
\end{equation}
\end{lem}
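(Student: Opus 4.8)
The plan is to pin down, for large $n$, which ``block'' of the sequence $\{\alpha_m\}$ the threshold index $M(n)$ falls into, using the explicit gap formula. First I would record that since $N_p-N_{p-1}=\mu_p^{-1}$, the definition \eqref{defalpha} gives, for every $p\ge 1$ and every $m$ with $N_{p-1}+1\le m\le N_p$,
\[
\alpha_m-\alpha_{m+1}=\frac{1}{p(p+1)(N_p-N_{p-1})}=\frac{\mu_p}{p(p+1)} .
\]
Hence $\alpha_m-\alpha_{m+1}$ is constant on each block, and since $\mu_p$ strictly decreases (Lemma \ref{fastest}) while $p(p+1)$ strictly increases, these block values strictly decrease in $p$. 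Therefore, if I set $p^*(n):=\min\{p\ge 1:\ \delta\mu_p/(p(p+1))<l_n\}$ --- finite because $\mu_p/(p(p+1))\to 0$ --- then $\delta(\alpha_m-\alpha_{m+1})<l_n$ holds exactly for $m$ in block $p^*(n)$ or later, so by \eqref{defM},
\[
M(n)=\max\bigl\{\,m_0,\ N_{p^*(n)-1}+1\,\bigr\}.
\]

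The second step is to show $p^*(n)\le n$ for all large $n$. By Lemma \ref{fastest}, $\mu_n\le l_n=l_n^{(k)}$ whenever $n\ge k$; so once also $n(n+1)>\delta$, i.e. for all $n\ge n_1=n_1(k,\delta)$, we have $\delta\mu_n/(n(n+1))\le(\delta/(n(n+1)))\,l_n<l_n$, which forces $p^*(n)\le n$. This is precisely the point at which the slow decay of $\{\alpha_m\}$ --- built in Section \ref{alpha_m} so that $\{\mu_n\}$ outpaces every $\{l_n^{(k)}\}$ --- is used. For $n\ge n_1$ it then follows that $M(n)\le\max\{m_0,\ N_{n-1}+1\}$, and hence, since $\{\alpha_m\}$ is decreasing and $\alpha_{N_{n-1}+1}=1/n$ by \eqref{alphan},
\[
\alpha_{M(n)}\ \ge\ \min\bigl\{\alpha_{m_0},\ \tfrac1n\bigr\}.
\]

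Finally, $\alpha_{m_0}>0$ is a fixed constant, so both $(n+1)^{-1}\le\alpha_{m_0}$ and $(n+1)^{-1}\le n^{-1}$ hold for all $n\ge n_0:=\max\{n_1,\lceil\alpha_{m_0}^{-1}\rceil\}$, giving $\alpha_{M(n)}\ge(n+1)^{-1}$ for $n\ge n_0$ and completing the argument. I do not anticipate a real obstacle in carrying this out; the only delicate points are the bookkeeping around the two alternatives in the formula for $M(n)$ and the fact that $M(n)$ tacitly depends on $k$ through $l_n$, and the one genuinely substantive ingredient is Lemma \ref{fastest}, which is exactly what makes $p^*(n)\le n$.
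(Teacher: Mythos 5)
Your proposal is correct and takes essentially the same route as the paper: both arguments come down to computing $\delta(\alpha_{N_n}-\alpha_{N_n+1})=\delta\mu_n/(n(n+1))$, invoking Lemma \ref{fastest} to get $\mu_n\le l_n^{(k)}$ for $n\ge k$, and noting $\delta/(n(n+1))<1$ for $n$ large, so that $M(n)\le N_n$ (the paper) or the marginally sharper $M(n)\le\max\{m_0,N_{n-1}+1\}$ (your version), after which the bound $\alpha_{M(n)}\ge(n+1)^{-1}$ follows from $\alpha_{N_n+1}=(n+1)^{-1}$ and monotonicity. Your block-by-block description of $M(n)$ via $p^*(n)$ is a more explicit packaging of what the paper does in one line, but the mathematical content — and in particular the crucial role of Lemma \ref{fastest} — is identical.
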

Recall that $M(n)$ depends implicitly on $k$.

We first prove that there is $n_0=n_0(k,\delta,m_0)$ such that $M(n)\leq N_n$ for all $n\geq n_0$. (Recall $N_n$ was defined in the construction of $\{\alpha_m\}$ at the end of subsection \ref{alpha_m}, and does not depend on $k$.) Indeed, by definition of $M(n)$, this is true if and only if 
\begin{equation}\label{lasteqn}
  \delta (\alpha_{N_n}-\alpha_{N_n+1})<l_n 
\end{equation}
for all large $n$ such that $N_n\geq m_0$. But by construction of the sequence $\alpha_m$, we have 
$$
\delta(\alpha_{N_n}-\alpha_{N_n+1})=\frac{\delta}{n(n+1)(N_n-N_{n-1})}=\frac{\delta}{n(n+1)\mu_n^{-1}},
$$
which will be strictly less than $\mu_n$ if $n>\delta^{-1}$. But by Lemma \ref{fastest}, $\mu_n\leq l_n:=l_n^{(k)}$ for all $n\geq |k|$. Hence \eqref{lasteqn} holds if $n\geq \max\{\delta^{-1},|k|\}$.

Since $N_n\to \infty$, there is $n_1$ such that $N_n\geq m_0$ for all $n\geq n_1$. Hence we may choose $n_0>\max\{\delta^{-1},|k|,n_1\}$ so that $M(n)\leq N_n$ for all $n\geq n_0$. By monotonicity of $\alpha_m$ and recalling \eqref{alphan}, we have
$$
\alpha_{M(n)}\geq \alpha_{N_n}>\alpha_{N_n+1}=(n+1)^{-1},\quad \text{for all }n\geq n_0,
$$
which is \eqref{ddd1}.

\subsection{A corollary of Lemma \ref{u1u2} and Lemma \ref{ddd}}
In this subsection, we prove the following set relation:
\begin{equation}\label{01}
\bigcup_{m=m_0}^\infty \bigcup_{n=1}^\infty O_n-\delta\alpha_m\supseteq [0,1).
\end{equation}
For the proof of \eqref{01}, we will be only interested in the overlapping part. For each $n$ and $j$, we have
\begin{equation}\label{idk}
    \bigcup_{m=m_0}^\infty I_{n,j}-\delta \alpha_m\supseteq U_2(j)\stackrel{\eqref{u2}}{=}B_-(I_{n,j},\delta \alpha_{M(n)}).
\end{equation}
Recall that $M(n)$ is independent of $j$. Thus we can take the union over $1\leq j\leq 2^{n-1}$ on both sides of \eqref{idk} and obtain
\begin{equation}\label{idk2}
    \bigcup_{j=1}^{2^{n-1}}\bigcup_{m=m_0}^\infty I_{n,j}-\delta \alpha_m\supseteq \bigcup_{j=1}^{2^{n-1}}B_-(I_{n,j},\delta \alpha_{M(n)}).
\end{equation}
Swapping the unions on the left hand side of \eqref{idk2} and by \eqref{star0} and \eqref{cupt}, we see it is equal to $\cup_{m=m_0}^\infty O_n-\delta \alpha_m$. By \eqref{star0} and \ref{not3} of Proposition \ref{Bminus}, the right hand side of \eqref{idk2} is equal to $B_-(O_n,\delta \alpha_{M(n)})$. We have thus showed
\begin{equation}\label{whatever}
    \bigcup_{m=m_0}^\infty O_n-\delta \alpha_m\supseteq B_-(O_n,\delta \alpha_{M(n)}).
\end{equation}
Now we invoke Lemma \ref{ddd} to find an $n_0$ such that $\alpha_{M(n)}\geq (n+1)^{-1}$ for all $n\geq n_0$. We then choose an integer $N\geq n_0$ such that for all $n\geq N$, we have $\delta/(n+1)\geq (2/3)^n$. This implies
\begin{equation}\label{lasteq}
    \delta \alpha_{M(n)}\geq (2/3)^n,\text{ for all $n\geq N$}.
\end{equation}
Taking union over $n$ on both sides of \eqref{whatever}, we have
\begin{align*}
\bigcup_{n=1}^\infty \bigcup_{m=m_0}^\infty O_n-\delta \alpha_m
&\supseteq \bigcup_{n=1}^\infty B_-(O_n,\delta \alpha_{M(n)})\\
&=\left(\bigcup_{n=1}^N B_-(O_n,\delta \alpha_{M(n)})\right)\bigcup \left(\bigcup_{n=N+1}^\infty B_-(O_n,\delta \alpha_{M(n)}) \right)\\
(\text{by \ref{not4} of Prop. \ref{Bminus} and \eqref{lasteq}})&\supseteq\left(\bigcup_{n=1}^N B_-(O_n,\delta \alpha_{M(n)})\right)\bigcup \left(\bigcup_{n=N+1}^\infty B_-\left(O_n,\left(\frac 2 3\right)^n\right)\right)\\
(\text{by \eqref{union} in Proposition \ref{contain}})&\supseteq \left(\bigcup_{n=1}^N B_-(O_n,\delta \alpha_{M(n)})\right)\bigcup \left([0,1)\backslash \left(\bigcup_{n=1}^N O_n^*\right)\right) \\
(\text{by \ref{not1a} of Proposition \ref{Bminus}})&\supseteq \left(\bigcup_{n=1}^N O_n^*\right)\bigcup \left([0,1)\backslash \left(\bigcup_{n=1}^N O_n^*\right)\right)\supseteq [0,1).
\end{align*}
Hence \eqref{01} follows.

\subsection{Proof of Lemma \ref{emp}}\label{lastproof}
We can now prove Lemma \ref{emp}, which is expressed in the form \eqref{empty}. By the inclusion relation \eqref{Asubset} in Proposition \ref{prop1}, for any $\delta>0$,
\begin{align*}
    \bigcap_{m=m_0}^\infty A-\delta \alpha_m
    &=\bigcap_{m=m_0}^\infty \left([0,1]\backslash\left( \bigcup_{n=1}^\infty O_n\right)-\delta \alpha_m\right)\\
    &=\bigcap_{m=m_0}^\infty \left([0,1]\bigcap \left(\bigcap_{n=1}^\infty O_n^c\right)-\delta \alpha_m\right)\\
    &\subseteq \bigcap_{m=m_0}^\infty\left(\bigcap_{n=1}^\infty O_n^c-\delta \alpha_m\right)\\
    &=\bigcap_{m=m_0}^\infty\bigcap_{n=1}^\infty (O_n^c-\delta \alpha_m),
\end{align*}
where the last line follows from \eqref{capt}.

Now we take complements in $[0,1)$ on both sides of \eqref{01} showed in the previous section. This gives
\begin{align*}
 \varnothing
 &\supseteq [0,1)\bigcap\left(\bigcap_{m=m_0}^\infty \bigcap_{n=1}^\infty (O_n-\delta\alpha_m)^c\right)\\
 (\text{by \eqref{ct}})&=[0,1)\bigcap\left(\bigcap_{m=m_0}^\infty \bigcap_{n=1}^\infty O_n^c-\delta\alpha_m\right)\\
 (\text{by \eqref{Asubset}})&\supseteq[0,1)\bigcap \left(\bigcap_{m=m_0}^\infty A-\delta \alpha_m\right),
\end{align*}
which is \eqref{empty}. This finishes the proof of Lemma \ref{emp} and thus Theorem \ref{thm1}.

\section{Proof of Theorem \ref{thm2}}
We start with a brief sketch of the proof. First, we introduce the definition of threshold sequences, and then prove Proposition \ref{threshold} which is just Theorem \ref{thm2} with an additional assumption that the prescribed $\{\beta_m\}$ can be replaced by a threshold sequence $\{\eta_m\}$. After that, we will show Proposition \ref{threshold} and  Lemma \ref{betaeta} to be stated below together imply Theorem \ref{thm2}. Lastly we give a proof of Lemma \ref{betaeta}.

\subsection{Threshold sequences}
\begin{defn}[Threshold Sequence]\label{thresseq}
    Let $\{\eta_m\}_{m=1}^\infty$ be a sequence of real numbers. We say $\{\eta_m\}$ is a threshold sequence if it satisfies the following properties:
    \begin{enumerate}
        \item \label{sdec}$\eta_m$ is strictly decreasing.
        \item \label{to0} $\eta_m$ converges to $0$.
        \item \label{decreasing} $\eta_m-\eta_{m+1}\geq\eta_{m+1}-\eta_{m+2},\quad \text{for all }m\geq 1$.
    \end{enumerate}
\end{defn}
    
	\begin{prop}\label{threshold}
	  Let $\{\eta_m\}_{m=1}^\infty$ be a threshold sequence. Then there is a closed and nowhere dense set $A\subseteq [0,1]$, depending on $\{\eta_m\}$, such that for any sequence $\alpha_m\to 0$ with $|\alpha_m|=O(\eta_m)$, there is $\delta>0$ and $t\in \mathbb R$ such that $t+\delta \alpha_m\in A$ for all $m$.
	\end{prop}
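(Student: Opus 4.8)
The plan is to construct $A$ as a Cantor-type set built directly from the threshold sequence $\{\eta_m\}$, designed so that the "gap structure" of $A$ mirrors the "decay structure" of $\{\eta_m\}$. The rough idea: scale the whole sequence $\{\eta_m\}$ by a single $\delta>0$ and a translation $t$, and then place the removed gaps of the Cantor construction precisely at the locations (and with lengths) that allow any sequence decaying like $O(\eta_m)$ to "thread through" the surviving intervals. Since $\{\eta_m\}$ is strictly decreasing to $0$ with decreasing consecutive differences (convexity), the points $\{\delta\eta_m + t\}$ accumulate at $t$ from the right, with successive spacings $\delta(\eta_m-\eta_{m+1})$ that shrink; this is exactly the regime in which a carefully designed nested sequence of intervals can be made to capture all of them simultaneously.

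First I would fix notation: set $\gamma_m := \eta_m - \eta_{m+1} > 0$, a decreasing positive sequence, and normalize so that $\eta_1 \le 1$ (harmless after rescaling the target set into $[0,1]$). I would then build $A = \bigcap_{n} \bigcup_j J_{n,j}$ by, at stage $n$, removing from each surviving interval an open gap whose length is tied to $\gamma_{m}$ for an appropriate index $m = m(n)$ growing with $n$; the key design constraint is that a sequence with $|\alpha_m| \le C\eta_m$ gets mapped by $x \mapsto \delta\alpha_m + t$ into points whose consecutive gaps are eventually smaller than the $n$-th stage gap lengths, so that past some stage all remaining points lie in a single surviving interval, which then recursively contains the tail. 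This is the direct analogue — run in reverse — of the mechanism in Lemma \ref{u1u2}: there, translates $I - \delta\alpha_m$ coalesce into one interval once $\delta(\alpha_m - \alpha_{m+1}) < l_n$; here I want the surviving intervals $J_{n,j}$ to be long enough, and the embedded points $\delta\alpha_m+t$ spaced finely enough, that a whole tail fits inside one $J_{n,j}$. I would choose the gap lengths $l_n \searrow 0$ slowly relative to $\{\gamma_m\}$ — concretely something like requiring $l_n$ comparable to $\gamma_{m(n)}$ where $m(n)$ is chosen so that $\sum_{m \ge m(n)} \gamma_m$ (i.e. $\eta_{m(n)}$) is small compared to the surviving interval lengths at stage $n$ — and then verify by induction that for each admissible $\{\alpha_m\}$ one can pick $\delta$ (small) and $t$ so that all $\delta\alpha_m + t$ land in $A$. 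Nowhere-density and closedness are automatic from the Cantor construction (gaps at every stage, intersection of closed sets), exactly as in Proposition \ref{prop1}, provided $l_n > 0$ at every stage and the surviving intervals have length bounded by something like $(2/3)^n$.

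The main obstacle I expect is the bookkeeping that matches a \emph{single} pair $(\delta, t)$ to the \emph{entire} tail of $\{\alpha_m\}$ across \emph{all} stages $n$ simultaneously — i.e. proving that the nested choices of surviving intervals can be made consistently. The subtlety is that $|\alpha_m| = O(\eta_m)$ only controls magnitude, not the increments $\alpha_m - \alpha_{m+1}$, which could oscillate; the sequence $\{\alpha_m\}$ need not be monotone. So I cannot directly argue "consecutive images are finely spaced"; instead I would argue that the \emph{diameter} of the image tail $\{\delta\alpha_m + t : m \ge m(n)\}$ is at most $\delta \cdot \operatorname{diam}\{\alpha_m : m \ge m(n)\} \le 2C\delta\eta_{m(n)}$, and choose the stage-$n$ surviving intervals to have length exceeding this, so that the whole tail is confined to a neighborhood that I can arrange to sit inside a single $J_{n,j}$. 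Threading this through the induction — ensuring the interval that must contain the tail at stage $n+1$ is a child of the one chosen at stage $n$, and that the finitely many "early" points $\delta\alpha_m + t$ with $m < m(n)$ can also be placed (by a further, possibly stage-dependent, shrinking of $\delta$ or adjustment of $t$) — is the delicate part. I would handle the early points by noting there are only finitely many of them below any fixed stage, and by building enough "room" into the construction (e.g. always keeping an interval of definite relative length near the left end, to which $t$ can be anchored) that a sufficiently small $\delta$ squeezes all of $\delta\alpha_1, \dots, \delta\alpha_{m(n)-1}$ plus the tail into $A$; taking a limit / diagonal argument over $n$ then pins down $(\delta,t)$. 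The rest — closedness, nowhere density, the reduction that $\delta$ may be taken positive — is routine given the earlier lemmas.
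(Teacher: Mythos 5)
Your construction and argument are genuinely different from the paper's, and the difference is substantive. The paper does not build a Cantor set. It instead removes from $[0,1]$ one short open interval $J_n \subseteq V_n$ from each member $V_n$ of a fixed countable base of $(0,1)$, so that $A = [0,1]\setminus\bigcup_n J_n$ is automatically closed and nowhere dense. Crucially, the existence of a valid $(\delta, t)$ is then proved by a \emph{measure-theoretic} argument, not a nested-interval one: for $\delta>0$ small one shows
\[
\mathcal L^1\left(\bigcap_{m\ge 1}(A-\delta\alpha_m)\right) > 0
\]
by bounding $\sum_n \mathcal L^1\left(\bigcup_m (J_n - \delta\alpha_m)\right) < 1$. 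The lengths $\lambda_n = |J_n|$ are engineered via the threshold index $T(n)$ (the analogue of $M(n)$ from Lemma~\ref{u1u2}) and an auxiliary index $K(n)$ so that, using the threshold property of $\{\eta_m\}$, the $n$-th term is dominated by a summable quantity and the sum tends to $\sum_n\lambda_n<1$ as $\delta\to 0^+$ by dominated convergence. This sidesteps entirely the step you yourself flag as ``the delicate part.''

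That delicate part is where your sketch has a genuine gap. You propose to find $(\delta, t)$ by a stage-by-stage nested choice followed by a ``limit/diagonal argument,'' allowing a ``further, possibly stage-dependent, shrinking of $\delta$ or adjustment of $t$.'' But $A$ is fixed before $\{\alpha_m\}$ is revealed, $\{\alpha_m\}$ need not be monotone (only $|\alpha_m|=O(\eta_m)$), and a \emph{single} pair $(\delta,t)$ must place every point $t+\delta\alpha_m$ in $A$ simultaneously. If $\delta$ shrinks with the stage, the diagonal limit risks $\delta=0$, which is disallowed. If $\delta$ is fixed and only $t$ is narrowed, you need the nested sets of admissible $t$ to be nonempty with enough compactness to pass to the limit; these sets are not obviously closed, nested nonempty open sets can have empty intersection, and establishing that a definite closed subinterval of admissible $t$ survives each stage — against infinitely many gap-avoidance constraints coming from an ever-growing, adversarially positioned collection of ``early'' points as well as from the tail — is precisely the quantitative bookkeeping that is missing. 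Your diameter bound $\mathrm{diam}\{t+\delta\alpha_m: m\ge m(n)\}\le 2C\delta\eta_{m(n)}$ is correct and controls the tail, but it does not control the finite prefix nor supply the compactness needed to close the diagonal. A combinatorial version may be salvageable (it would be closer in spirit to M\'ath\'e's slalom construction \cite{MR2822418}, which however treats monotone sequences), but as written the proposal asserts rather than establishes the central step, whereas the paper's measure bound proves it outright.
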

	For the demonstration to be more clear, we give a proof of Proposition \ref{threshold} in the next subsection.
	\begin{lem}\label{betaeta}
	Let $\{\beta_m\}$ be a sequence of real numbers strictly decreasing to $0$. Then there is a threshold sequence $\{\eta_m\}$ such that $\beta_m\leq \eta_m$ for all $m$.
	\end{lem}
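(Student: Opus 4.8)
The plan is to construct $\{\eta_m\}$ explicitly by a majorization-and-convexification procedure: first build a sequence that dominates $\{\beta_m\}$ and decreases to $0$ with prescribed control on consecutive gaps, then repair the convexity condition \eqref{decreasing} (i.e. the condition that the gaps $\eta_m-\eta_{m+1}$ are themselves nonincreasing) by passing to a "lower convex envelope" of the gaps. The key point is that conditions \eqref{sdec} and \eqref{to0} are easy to arrange, and condition \eqref{decreasing} is exactly the statement that the piecewise-linear interpolant of the points $(m,\eta_m)$ is a convex function; so the whole lemma amounts to finding a convex, decreasing, vanishing-at-infinity function lying above the points $(m,\beta_m)$.

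First I would reduce to a cleaner target. Since $\beta_m\searrow 0$, I may pass to a subsequence of indices or, more simply, replace $\beta_m$ by any larger sequence; so it is enough to produce a threshold sequence dominating some convenient majorant of $\{\beta_m\}$. I would pick a strictly decreasing sequence $c_m\searrow 0$ with $c_m\geq\beta_m$ and with gaps $d_m:=c_m-c_{m+1}>0$ that decrease very slowly relative to how fast I will later be allowed to flatten them — concretely, something like $c_m=\sup_{k\ge m}\beta_k$ padded upward so that the $d_m$ are positive, or even just $c_m = \beta_m + 2^{-m}$ after first arranging (by taking a further majorant) that the tail sums are finite. Then I define $\eta_m$ by prescribing its gaps: let $e_m$ be the nonincreasing minorant of $\{d_j\}_{j\ge m}$ given by $e_m:=\inf_{j\ge m} d_j$... no — that can be $0$. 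Instead the right move is: set $\eta_m := \sum_{j\ge m} \delta_j$ where $\{\delta_j\}$ is a nonincreasing, strictly positive, summable sequence chosen so that $\sum_{j\ge m}\delta_j \ge c_m$ for all $m$. Such $\{\delta_j\}$ exists: take $\delta_j$ to be the slope of the lower convex hull of the points $\{(m, C_m)\}$ where $C_m:=\sum_{i\ge m} d_i = c_m - \lim c$ (finite after the padding step), which is automatically nonincreasing in $j$, strictly positive, and summable with $\sum_{j\ge m}\delta_j \ge C_m = c_m$.

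Having fixed $\{\delta_j\}$ nonincreasing and strictly positive with $S_m:=\sum_{j\ge m}\delta_j$ finite, I set $\eta_m:=S_m$. Then: (i) $\eta_m-\eta_{m+1}=\delta_m>0$, so $\eta_m$ is strictly decreasing — condition \eqref{sdec}; (ii) $\eta_m=S_m\to 0$ as $m\to\infty$ since the tail of a convergent series vanishes — condition \eqref{to0}; (iii) $(\eta_m-\eta_{m+1})-(\eta_{m+1}-\eta_{m+2})=\delta_m-\delta_{m+1}\ge 0$ by the choice of $\{\delta_j\}$ nonincreasing — condition \eqref{decreasing}. Finally $\eta_m=S_m\ge c_m\ge\beta_m$, which is the domination claim. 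The only nontrivial ingredient is the existence of the nonincreasing summable sequence $\{\delta_j\}$ with $\sum_{j\ge m}\delta_j\ge c_m$; this is the main obstacle, and I expect to resolve it exactly via the lower convex envelope of the partial-sum sequence $(m\mapsto c_m-L)$ (with $L=\lim_m c_m=0$ after the reduction), whose left derivative is automatically positive, nonincreasing, and has the required tail-sum lower bound because a convex function lies above its chords; one must just check the envelope is strictly below the points only finitely... actually one checks its slopes are strictly positive because $c_m$ is strictly decreasing, so no degeneracy occurs. I would present this convex-hull construction carefully, as it is where all three defining properties of a threshold sequence are simultaneously secured.
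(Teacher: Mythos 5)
Your reduction is the right one: condition (3) of Definition \ref{thresseq} says the gaps $\eta_m-\eta_{m+1}$ are nonincreasing, so a threshold sequence dominating $\{\beta_m\}$ is exactly a convex, strictly decreasing, vanishing majorant of the points $(m,\beta_m)$, equivalently a nonincreasing positive summable $\{\delta_j\}$ with $\sum_{j\ge m}\delta_j\ge\beta_m$. But the construction you give for this key step goes in the wrong direction. If $g$ is the lower convex hull (i.e.\ the largest convex \emph{minorant}) of the points $(m,C_m)$ and $\delta_j$ are its successive decrements, then $\sum_{j\ge m}\delta_j=g(m)-\lim_k g(k)\le g(m)\le C_m$, so your claimed inequality $\sum_{j\ge m}\delta_j\ge C_m$ is backwards; the justification ``a convex function lies above its chords'' is also false between the chord's endpoints, where it lies below them. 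Concretely, take $\beta_1=1$, $\beta_2=0.9$, $\beta_3=0.1$, $\beta_m=2^{-m}$ for $m\ge4$, padded to $c_m=\beta_m+2^{-m}$: the chord of the padded points from $m=1$ to $m=3$ has value $0.8625$ at $m=2$, so $\eta_2\le g(2)\le 0.8625<0.9=\beta_2$ and the required domination $\eta_m\ge\beta_m$ fails. Nor can you repair this by taking a ``smallest convex majorant'': the pointwise infimum of convex majorants need not be convex, so the existence of a convex decreasing majorant tending to $0$ --- which is precisely the content of Lemma \ref{betaeta} --- requires an actual construction, not an envelope of the data.

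The paper builds such a majorant greedily: $\eta_1=\beta_1$, $\eta_2=\beta_2$, and $\eta_m=\max\{\beta_m,\,2\eta_{m-1}-\eta_{m-2}\}$ for $m\ge3$; that is, continue linearly with the current gap unless $\beta_m$ rises above that line, in which case jump up to $\beta_m$ (this can only shrink the gap, so gap-monotonicity and $\eta_m\ge\beta_m$ are immediate). Strict decrease follows by induction, and the only nontrivial point is $\eta_m\to0$, which the paper gets from a dichotomy: if from some index on the linear branch were always chosen, $\{\eta_m\}$ would be an arithmetic progression with negative common difference and eventually $\le 0$, contradicting $\eta_m\ge\beta_m>0$; otherwise $\eta_{m_k}=\beta_{m_k}\to0$ along a subsequence, and monotonicity finishes. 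If you wish to keep your tail-sum formulation, the gaps of this greedy majorant provide the sequence $\{\delta_j\}$ you were after; the slopes of the lower convex hull of the data do not.
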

	
	\subsubsection{Proof that Proposition \ref{threshold} and Lemma \ref{betaeta} imply Theorem \ref{thm2}}

	Let $\{\beta_m\}$ be given as in Theorem \ref{thm2}. By Lemma \ref{betaeta}, find a threshold sequence $\{\eta_m\}$ such that $\beta_m=O(\eta_m)$. By Proposition \ref{threshold} applied to $\{\eta_m\}$, we can find a closed and nowhere dense $A\subseteq [0,1]$, depending on $\{\eta_m\}$, such that for all $|\alpha_m|=O(\eta_m)$, in particular for all $|\alpha_m|=O(\beta_m)=O(\eta_m)$, there is $\delta>0$ and $t\in\mathbb R$ such that $t+\delta \alpha_m\in A$ for all $m$. But  by Lemma \ref{betaeta}, $\{\eta_m\}$ depends on $\{\beta_m\}$ only, so in turn $A$ also depends on $\{\beta_m\}$ only.

	\par
	
	\subsection{Proof of Proposition \ref{threshold}}
	\subsubsection{Construction of $A$}
	We start with any countable collection of open intervals $V_n$ that forms a countable base for the standard topology on $(0,1)$. For example, we can choose $\{V_n\}$ to be the countable collection of all open intervals in $(0,1)$ with rational centres and rational radii. Our set $A$ will be of the form
	\begin{equation}\label{defA}	    	A=[0,1]\backslash\bigcup_{n=1}^\infty J_n
	\end{equation}
	for a carefully chosen collection of intervals $J_n\subseteq V_n$ whose lengths $\lambda_n$ are to be specified (See \eqref{ln}). With this definition, $A\subseteq [0,1]$ is automatically closed and nowhere dense.

	\subsubsection{A measure-theoretic argument}
	We will figure out what conditions can be imposed on $\lambda_n$ so that   the set $A$ we defined satisfies the affine containment property as stated in Proposition \ref{threshold}.
	
	Let $|\alpha_m|=O(\eta_m)$. Assuming $\lambda_n$ has been chosen, we are going to find $\delta>0$ and $t\in \mathbb R$ such that $t+\delta\alpha_m\in A$ for all $m$. In contrast to \eqref{empty}, we show that there is $0<\delta<1$ such that the following set relation holds:
	\begin{equation}\label{nonempty}
   \bigcap_{m=1}^\infty A-\delta\alpha_m \neq\varnothing.
\end{equation}
Using measure theory, \eqref{nonempty} is true if, in particular,
\begin{equation}\label{nonempty2}
    \mathcal L^1\left(\bigcap_{m=1}^\infty A-\delta\alpha_m\right)>0.
\end{equation}
Here, $\mathcal L^1$ denotes the standard Lebesgue measure on $\mathbb R$.

But since $A=[0,1]\backslash (\cup_{n=1}^\infty J_n)$ \eqref{defA}, using \eqref{ct} and \eqref{capt}, we can compute
$$
\bigcap_{m=1}^\infty A-\delta\alpha_m=[0,1]\bigcap \left(\bigcap_{m=1}^\infty\bigcap_{n=1}^\infty J_n^c-\delta\alpha_m\right).
$$
Thus \eqref{nonempty2} holds if and only if (where \eqref{ct} and \eqref{cupt} are used)
\begin{equation*}
1>\mathcal L^1\left([0,1]\backslash\bigcap_{m=1}^\infty\bigcap_{n=1}^\infty J_n^c-\delta\alpha_m\right)=\mathcal L^1\left([0,1] \bigcap \bigcup_{m=1}^\infty\bigcup_{n=1}^\infty J_n-\delta\alpha_m\right). 
\end{equation*}
Hence it suffices to show that there is $\delta>0$ such that
	\begin{equation*}
		1>\mathcal L^1 \left(\bigcup_{m=1}^\infty \bigcup_{n=1}^\infty J_n-\delta\alpha_m\right)=\mathcal L^1 \left(\bigcup_{n=1}^\infty \left(\bigcup_{m=1}^\infty J_n-\delta\alpha_m\right)\right).
		\end{equation*}
It further suffices to show there is $\delta>0$ such that
		\begin{equation}\label{meas}		    \sum_{n=1}^\infty \mathcal L^1\left(\bigcup_{m=1}^\infty J_n-\delta\alpha_m\right)<1.
		\end{equation}
		The following proposition will imply \eqref{meas}:
		\begin{prop}\label{123}
		   \begin{enumerate}
		\item \label{length}
		For any $\delta>0$ and any $n\geq 1$,
	    \begin{equation*}
	       \lim_{\delta\to 0^+} \mathcal L^1\left(\bigcup_{m=1}^\infty J_n- \delta\alpha_m\right)=\lambda_n.
	    \end{equation*}
	    \item \label{monotone}Let $\delta_0>0$ be a fixed constant such that $|\alpha_m|\leq \frac {\eta_m}{2\delta_0}$ for all $m\geq 1$. (Such $\delta_0$ exists since $|\alpha_m|=O(\eta_m)$, and note that $\delta_0$ does not depend on $m,n$.) Then for any $0<\delta<\delta_0$ and any $n\geq 1$,
	    \begin{equation}\label{mono}
	         \mathcal L^1\left(\bigcup_{m=1}^\infty J_n- \delta\alpha_m\right)\leq \mathcal L^1\left(\bigcup_{m=1}^\infty J_n- \eta_m\right).
	    \end{equation}
	    \item \label{bound}
		$$
		\sum_{n=1}^\infty \mathcal L^1\left(\bigcup_{m=1}^\infty J_n- \eta_m\right)<\infty.
		$$
		\end{enumerate}
		\end{prop}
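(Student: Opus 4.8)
\emph{Proof sketch.} The plan is to deduce all three parts from a single exact computation: applying Lemma \ref{u1u2} to the threshold sequence $\{\eta_m\}$ (which by Definition \ref{thresseq} is strictly decreasing to $0$ with $\eta_m-\eta_{m+1}$ decreasing), with parameters $m_0=1$, $\delta=1$ and the interval $J_n$ of length $\lambda_n$ in place of $I_{n,j}$ and $l_n$. Setting $M(n):=\min\{m\ge 1:\eta_m-\eta_{m+1}<\lambda_n\}$ as in \eqref{defM} (finite since $\eta_m-\eta_{m+1}\searrow 0$), Lemma \ref{u1u2} writes $\bigcup_{m=1}^\infty J_n-\eta_m$ as the disjoint union of $M(n)-1$ intervals of length $\lambda_n$ together with the single interval $B_-(J_n,\eta_{M(n)})$ of length $\lambda_n+\eta_{M(n)}$, so that
\[
\mathcal L^1\Bigl(\bigcup_{m=1}^\infty J_n-\eta_m\Bigr)=(M(n)-1)\lambda_n+\bigl(\lambda_n+\eta_{M(n)}\bigr)=M(n)\lambda_n+\eta_{M(n)}.
\]
Everything else is compared against this quantity.

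For \eqref{length} I would argue directly rather than through Lemma \ref{u1u2}, since here $\{\alpha_m\}$ need not be monotone. As a convergent sequence $\{\alpha_m\}$ is bounded, say $|\alpha_m|\le C$; then for $\delta>0$ each translate $J_n-\delta\alpha_m$ lies in the $\delta C$-neighbourhood of $J_n$, so $\mathcal L^1\bigl(\bigcup_m J_n-\delta\alpha_m\bigr)\le\lambda_n+2\delta C$, while the same union always contains the single interval $J_n-\delta\alpha_1$ of measure $\lambda_n$. Letting $\delta\to0^+$ pins the limit down to $\lambda_n$.

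For \eqref{monotone}, fix $0<\delta<\delta_0$ and $n$, and split the index set at $M(n)$. The block $\bigcup_{m=1}^{M(n)-1}J_n-\delta\alpha_m$ is a union of $M(n)-1$ intervals of length $\lambda_n$, hence has measure at most $(M(n)-1)\lambda_n$ by subadditivity. For the tail block, the defining property of $\delta_0$ gives $|\delta\alpha_m|\le\tfrac{\delta}{\delta_0}\cdot\tfrac{\eta_m}{2}<\tfrac{\eta_m}{2}\le\tfrac{\eta_{M(n)}}{2}$ for every $m\ge M(n)$ (using that $\{\eta_m\}$ decreases), so each $J_n-\delta\alpha_m$ with $m\ge M(n)$ sits inside the $\tfrac12\eta_{M(n)}$-neighbourhood of $J_n$, an interval of length $\lambda_n+\eta_{M(n)}$; thus the tail block has measure at most $\lambda_n+\eta_{M(n)}$. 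Adding the two estimates gives $\mathcal L^1\bigl(\bigcup_m J_n-\delta\alpha_m\bigr)\le M(n)\lambda_n+\eta_{M(n)}$, which by the displayed identity equals $\mathcal L^1\bigl(\bigcup_m J_n-\eta_m\bigr)$; this is \eqref{mono}. The factor $2$ built into $\delta_0$ is precisely what keeps the tail block inside an interval of length $\lambda_n+\eta_{M(n)}$ rather than $\lambda_n+2\eta_{M(n)}$.

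Part \eqref{bound} is where the threshold hypothesis is genuinely used, and I expect the estimate $\lambda M_\lambda\to0$ below to be the main obstacle. By the displayed identity it suffices to show $\sum_n\bigl(M(n)\lambda_n+\eta_{M(n)}\bigr)<\infty$, and this follows once the choice of $\lambda_n$ in \eqref{ln} is taken small enough to force, say, $M(n)\lambda_n\le2^{-n}$ and $\eta_{M(n)}\le2^{-n}$. To see such a choice is available, I would record two facts about $M_\lambda:=\min\{m\ge1:\eta_m-\eta_{m+1}<\lambda\}$ as $\lambda\to0^+$. First, $M_\lambda\to\infty$: if $\lambda<\eta_{m_0}-\eta_{m_0+1}$ then no index $m\le m_0$ satisfies the defining inequality, so $M_\lambda>m_0$; consequently $\eta_{M_\lambda}\to0$. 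Second, $\lambda M_\lambda\to0$: writing $d_m:=\eta_m-\eta_{m+1}$, the threshold conditions make $\{d_m\}$ a decreasing sequence with $\sum_m d_m=\eta_1<\infty$, which forces $m\,d_m\to0$ (compare $\tfrac m2 d_m$ with the tail $\sum_{j>m/2}d_j$); since minimality of $M_\lambda$ gives $d_{M_\lambda-1}\ge\lambda$ once $M_\lambda\ge2$, we obtain $\lambda(M_\lambda-1)\le(M_\lambda-1)d_{M_\lambda-1}\to0$ as $\lambda\to0^+$ (using $M_\lambda\to\infty$), hence $\lambda M_\lambda\to0$. Granting these two limits, one picks $\lambda_n\le|V_n|$ small enough that $M(n)\lambda_n\le2^{-n}$ and $\eta_{M(n)}\le2^{-n}$; then $\sum_n\bigl(M(n)\lambda_n+\eta_{M(n)}\bigr)\le\sum_n 2^{1-n}<\infty$, which is \eqref{bound}.
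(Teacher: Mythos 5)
Your Parts \eqref{length} and \eqref{monotone} follow essentially the same route as the paper: a direct squeeze for the first limit, and for the second the same split of the union at $M(n)$ (the paper calls it $T(n)$), bounding the head block by subadditivity and the tail block by trapping it inside the $\tfrac12\eta_{M(n)}$-neighbourhood of $J_n$, then comparing with the exact identity $\mathcal L^1\bigl(\bigcup_m J_n-\eta_m\bigr)=M(n)\lambda_n+\eta_{M(n)}$ furnished by Lemma \ref{u1u2}. Your reading of the role of the factor $2$ in $\delta_0$ is also exactly the paper's.

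For Part \eqref{bound} your argument is correct but differs in organization from the paper's. You isolate a clean abstract fact about threshold sequences: writing $d_m=\eta_m-\eta_{m+1}$, the sequence $\{d_m\}$ is decreasing with $\sum_m d_m=\eta_1<\infty$, hence $m\,d_m\to 0$, which together with $M_\lambda\to\infty$ and the minimality inequality $d_{M_\lambda-1}\ge\lambda$ gives $\lambda M_\lambda\to 0$ and $\eta_{M_\lambda}\to 0$; you then choose $\lambda_n$ small enough that both quantities are at most $2^{-n}$. The paper instead hard-codes a choice of $\lambda_n$ via an auxiliary index $K(n):=2\min\{m:\eta_m<n^{-2}\}$ and the formula $\lambda_n=\min\{|V_n|,2^{-n},\eta_{K(n)}-\eta_{K(n)+1}\}$, which forces $T(n)>K(n)$, and then bounds $T(n)\lambda_n$ by the telescoping sum $2\bigl(\eta_{\lfloor T(n)/2\rfloor}-\eta_{T(n)}\bigr)\le 2\,\eta_{K(n)/2}<2n^{-2}$. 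The underlying estimate is the same Cauchy-type telescoping $\tfrac m2 d_m\le\sum_{j>m/2}d_j$; you state it as a general limit and then quantify, while the paper builds the quantification directly into the definition of $\lambda_n$. Your version is more conceptual and makes the role of the threshold condition more transparent; the paper's is more self-contained in that the reader sees an explicit $\lambda_n$. Since the proposition leaves $\lambda_n$ to be chosen as part of the construction, your alternative choice is legitimate.

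One small point worth flagging explicitly in a writeup: since $M_\lambda$ is integer-valued and depends on $\lambda$ only through the half-open intervals $[d_{m+1},d_m)$, the function $\lambda\mapsto M_\lambda$ is a step function and $\lambda\mapsto\lambda M_\lambda$ need not be monotone in $\lambda$; your argument correctly establishes the \emph{limit} $\lambda M_\lambda\to0$ along all $\lambda\to0^+$, which is what you need to pick each $\lambda_n$ below the relevant threshold, but it would be slightly cleaner to phrase the final step as choosing $\lambda_n\le|V_n|$ so small that $\sup_{0<\lambda\le\lambda_n}\lambda M_\lambda\le 2^{-n}$ and $\eta_{M_{\lambda_n}}\le 2^{-n}$.
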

		
		Indeed, if all of the above are true, then by the dominated convergence theorem applied to  $f_\delta(n)=\mathcal L^1(\cup_{m=1}^\infty J_n-\delta\alpha_m)$ with the measure space being the counting measure on $\mathbb N$, we get
		$$
		\lim_{\delta\to 0^+}\sum_{n=1}^\infty \mathcal L^1\left(\bigcup_{m=1}^\infty J_n-\delta\alpha_m\right)=\sum_{n=1}^\infty \lambda_n.
		$$
		Thus \eqref{meas} holds since $\sum_{n=1}^\infty \lambda_n<1$ by \eqref{ln}.
		
\subsubsection{Proof of Proposition \ref{123}}
		We first prove \eqref{length}. Let $\delta>0$ and $n\geq 1$. Denote $J_n:=(a,b)$. Since $\alpha_m\to 0$, it is bounded. Let $c=\inf \{\alpha_m:m\geq 1\}$ and $d=\sup \{\alpha_m:m\geq 1\}$. Then we have $\inf (
		J_n-\delta\alpha_m)=a-\delta\alpha_m\geq a-\delta d$, and $\sup ( J_n-\delta\alpha_m)=b- \delta\alpha_m\leq b-\delta c$. Hence $\cup_{m=1}^\infty J_n- \delta\alpha_m\subseteq (a-\delta d,b-\delta c)$, so 
		$$
		\mathcal L^1\left(\cup_{m=1}^\infty J_n- \delta\alpha_m\right)\leq b-a+\delta(d-c)=\lambda_n+\delta(d-c).$$ 
		On the other hand, $\cup_{m=1}^\infty J_n-\delta\alpha_m\supseteq J_n-\delta\alpha_1=(a-\delta\alpha_1,b-\delta\alpha_1)$, so $\mathcal L^1\left(\cup_{m=1}^\infty J_n- \delta\alpha_m\right)\geq b-a=\lambda_n$. Hence the squeeze law implies that $\mathcal L^1\left(\cup_{m=1}^\infty J_n- \delta\alpha_m\right)$ converges to $\lambda_n$ as $\delta\to 0^+$.

		Now we come to Part \eqref{monotone}. 
		Define, similar to \eqref{defM},
	\begin{equation}\label{newdefM}
	    T(n):=\min\{m:\eta_m-\eta_{m+1}<\lambda_n\}.
	\end{equation}
	Since $\eta_m$ is a threshold sequence (see Definition \ref{thresseq}), it decreases strictly to $0$ and $\eta_{m}-\eta_{m+1}$ is also decreasing. Thus we have $\eta_m-\eta_{m+1}<\lambda_n$ if and only if $m\geq T(n)$.
	
	 By Lemma \ref{u1u2}, we have that $U_1:=\cup_{m=1}^{T(n)-1}J_n-\eta_m$ is a disjoint union of open intervals of length $\lambda_n$, that $U_2:=\cup_{m=T(n)}^\infty 
	 J_n-\eta_m$ is a single open interval of length $\eta_{T(n)}+\lambda_n$, and that $\cup_{m=1}^{T(n)-1}J_n-\eta_m$ and $\cup_{m=T(n)}^\infty J_n-\eta_m$ are disjoint. Thus the right hand side of \eqref{mono} can be computed as:
		\begin{equation}\label{measure}
		    \mathcal L^1\left(\bigcup_{m=1}^\infty J_n- \eta_m\right)=(T(n)-1)\lambda_n+\eta_{T(n)}+\lambda_n=T(n)\lambda_n+\eta_{T(n)}.
		\end{equation}
		
		Now we come to the left hand side of \eqref{mono}. Regardless of the positions of the intervals $\{J_n-\delta\alpha_m\}_{m=1}^{T(n)-1}$, we always have
		$$
		\mathcal L^1\left(\bigcup_{m=1}^{T(n)-1} J_n- \delta\alpha_m\right)\leq \sum_{m=1}^{T(n)-1} \mathcal L^1\left(J_n- \delta\alpha_m\right)= (T(n)-1)\lambda_n.
		$$
		On the other hand, by \ref{monotone} of Proposition \ref{123}, for all $0<\delta<\delta_0$ and for all $m\geq 1$, we have $\delta |\alpha_{m}|\leq \frac{\eta_{m}}2$. Denote $J_n=(a,b)$. Then for all $m\geq T(n)$, we have 
		$$\sup (J_n-\delta\alpha_m)=b-\delta\alpha_m\leq b+\frac {\eta_m}2\leq b+\frac {\eta_{T(n)}}2.$$
		
		Similarly, for all $m\geq T(n)$, we have $\inf (J_n-\delta\alpha_m)\geq a-\frac {\eta_{T(n)}}2$. This implies	$\cup_{m=T(n)}^\infty J_n-\delta\alpha_m\subseteq \left(a-\frac{\eta_{T(n)}}2,b+\frac{\eta_{T(n)}}2\right)$, and so
		$$
		\mathcal L^1\left(\bigcup_{m=T(n)}^{\infty} J_n- \delta\alpha_m\right)\leq \eta_{T(n)}+b-a=\eta_{T(n)}+\lambda_n.
		$$
		Thus
		\begin{align*}
		    \mathcal L^1\left(\bigcup_{m=1}^\infty J_n- \delta\alpha_m\right)
		    &\leq \mathcal L^1\left(\bigcup_{m=1}^{T(n)-1} J_n- \delta\alpha_m\right)+\mathcal L^1\left(\bigcup_{m=T(n)}^{\infty} J_n- \delta\alpha_m\right)\\
		    &\leq (T(n)-1)\lambda_n+\eta_{T(n)}+\lambda_n\\
		    (\text{by \eqref{measure}})&=\mathcal L^1\left(\bigcup_{m=1}^\infty J_n- \eta_m\right).
		\end{align*}
	This finishes the proof of Part \eqref{monotone} of the proposition.
	
	It remains to prove Part \eqref{bound}. By \eqref{measure} this is equivalent to
		\begin{equation}\label{last}
		    \sum_{n=1}^\infty T(n)\lambda_n+\eta_{T(n)}<\infty.
		\end{equation}
	To this end, we need to specify our choice of $\lambda_n$.
	
	Define $K(n):=2\min\{m:\eta_m<n^{-2}\}$. $K(n)$ is well defined since $\eta_m\searrow 0$, and in particular, we have
	\begin{equation}\label{Kn}
	    K(n)\text{ is even }\quad\quad \text{ and }\quad\quad\eta_{\frac{K_n}2}<n^{-2}.
	\end{equation}	
	Recall that $V_n$'s are open intervals that form a topological base for $(0,1)$ and that $J_n$ are chosen to be subintervals of $V_n$ for each $n$.
	
	Then we define: 
	\begin{equation}\label{ln}
	    \lambda_n=\min\left\{|V_n|,2^{-n},\eta_{K(n)}-\eta_{K(n)+1}\right\}>0.
	\end{equation}

	Note that $\lambda_n\leq\eta_{K(n)}-\eta_{K(n)+1}$, so $T(n)>K(n)$ by definition of $T(n)$ in \eqref{newdefM}. By monotonicity of $\{\eta_m\}$ and \eqref{Kn}, we have
	\begin{equation}\label{summable}
	    \sum_{n=1}^\infty \eta_{\left\lfloor \frac{T(n)}2\right\rfloor }\leq \sum_{n=1}^\infty \eta_{\left\lfloor \frac{K(n)}2\right\rfloor }=\sum_{n=1}^\infty \eta_{ \frac{K(n)}2 }<\sum_{n=1}^\infty n^{-2}<\infty.
	\end{equation}
    
	 Also note that since $\eta_m$ is decreasing, $\eta_{T(n)}\leq \eta_{\lfloor T(n)/2\rfloor }$ is also summable by \eqref{summable}.

	The definition of $T(n)$ \eqref{newdefM} implies that for all $m<T(n)$ we have $\eta_m-\eta_{m+1}\geq \lambda_n$. Hence we can bound $T(n)\lambda_n$ from above by:
		\begin{align*}
		    T(n)\lambda_n
		    &=2\frac {T(n)}2 \lambda_n\leq 2\left(T(n)-\left\lfloor \frac {T(n)}2\right \rfloor\right)\lambda_n\\
		    &\leq 2\left(\eta_{\left\lfloor \frac {T(n)}2\right \rfloor}-\eta_{\left\lfloor \frac {T(n)}2\right \rfloor+1}+\cdots+\eta_{T(n)-1}-\eta_{T(n)}\right)\\	    &=2\eta_{\left\lfloor \frac {T(n)}2\right \rfloor}-2\eta_{T(n)},
		\end{align*}
		which is summable by \eqref{summable} and the note following it. This proves \eqref{last}, thus \eqref{bound} of Proposition \ref{123}.

    \subsection{Proof of Lemma \ref{betaeta}}

    Let $\beta_m\searrow 0$ be given. Let $\eta_1=\beta_1$ and $\eta_2=\beta_2$. For $m\geq 3$, we define 
    \begin{equation*}
        \eta_m=\max\left\{\beta_m,2\eta_{m-1}-\eta_{m-2}\right\}.
    \end{equation*}
    By this definition, we have $\eta_m\geq \beta_m$ for all $m\geq 1$ as well as $\eta_{m-1}-\eta_{m}\leq \eta_{m-2}-\eta_{m-1}$ for all $m\geq 3$, which is Part \eqref{decreasing} of Definition \ref{thresseq}. It remains to show Parts \eqref{sdec} and \eqref{to0}, namely, $\eta_m$ strictly decreases to $0$.
    
    We first show by induction that $\eta_m$ is strictly decreasing. First, $\eta_2=\beta_2<\beta_1=\eta_1$. Assuming $\eta_{m-1}<\eta_{m-2}$ for all $m\geq m_0$ where $m_0\geq 3$, we will show that $\eta_{m}<\eta_{m-1}$. We have 2 cases:
    \begin{itemize}
        \item If $\beta_m=\max\left\{\beta_m,2\eta_{m-1}-\eta_{m-2}\right\}$, then $\eta_m=\beta_m<\beta_{m-1}\leq \eta_{m-1}$ as $\beta_m$ is assumed to be strictly decreasing.
        \item  If $2\eta_{m-1}-\eta_{m-2}=\max\left\{\beta_m,2\eta_{m-1}-\eta_{m-2}\right\}$, then $\eta_m=2\eta_{m-1}-\eta_{m-2}<\eta_{m-1}$, since the last inequality equivalent to $\eta_{m-1}<\eta_{m-2}$ which is our induction assumption.
    \end{itemize} 
    Next we show that $\eta_m$ converges to $0$. We have two cases:
    \begin{itemize}
    \item If there is $N\ge 3$ such that for all $m\geq N$, $\beta_m\leq 2\eta_{m-1}-\eta_{m-2}$, then $\eta_m=2\eta_{m-1}-\eta_{m-2}$ for all $m\geq N$. Thus $\{\eta_m:m\geq N-2\}$ is an infinite arithmetic progression of common difference $\eta_{N-1}-\eta_{N-2}<0$ marching to the left. Hence if $m\geq N-2+\frac {\eta_{N-2}}{\eta_{N-2}-\eta_{N-1}}$, then $\eta_m\leq 0$, which is a contradiction since by definition, $\eta_m\geq \beta_m>0$ for all $m$.
    
    \item Otherwise, $\beta_m> 2\eta_{m-1}-\eta_{m-2}$ infinitely often, so there is a subsequence $\eta_{m_k}=\beta_{m_k}$ for all $k$. Since $\beta_m\to 0$, we have $\eta_{m_k}\to 0$. But $\{\eta_m\}$ is a strictly decreasing sequence, so $\{\eta_m\}$ itself also converges to $0$.
        
     \end{itemize}

 \newpage

\section{Appendix}
In the appendix, we give a proof of a particular case of Molter and Yavicoli's result \cite{Molter}. It is actually almost parallel to their proof, but with the notations greatly simplified since we are only considering a special case.
\begin{defn}
		A dimension function $h:[0,\infty)\to [0,\infty]$ is a right-continuous increasing function such that $h(0)=0$, $h(t)>0$ for $t>0$.
	\end{defn}
	\begin{defn}
	    Let $h$ be a dimension function. For a set $E\subseteq \mathbb R$ and $0<\delta\leq \infty$, we define
	    $$
	    \mathcal H^h_\delta(E)=\inf\left\{\sum_i h\left(\mathrm{diam}{B_i}\right) :\bigcup_i B_i\supseteq E,\mathrm{diam}(B_i)<\delta\right\}.
	    $$
	    We then define
	    $$
	    \mathcal H^h(E):=\sup_{0<\delta\leq \infty}\mathcal H^h_\delta(E)=\lim_{\delta\to 0^+}\mathcal H^h_\delta(E).
	    $$
	\end{defn}
	\begin{prop}
		Let $h(x):=-\frac {1}{\ln x}$ with $h(0)=0$ be a dimension function. Suppose $\mathcal H^h(E)=0$ for some set $E\subseteq \mathbb R$. Then $E$ has Hausdorff dimension $0$.
	\end{prop}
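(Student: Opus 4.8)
The plan is to reduce the claim to an elementary comparison between the gauge $h$ and the power gauges $x\mapsto x^s$ near the origin. Write $\mathcal H^s := \mathcal H^{h_s}$ for the usual $s$-dimensional Hausdorff measure, i.e.\ the one attached to the gauge $h_s(x)=x^s$, and recall that $\mathrm{dim}_H(E)=\inf\{s>0:\mathcal H^s(E)=0\}$. Thus it suffices to show that $\mathcal H^s(E)=0$ for every fixed $s>0$.

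The only analytic input I would use is that $h$ decays more slowly than any positive power: for each $s>0$,
\[
\lim_{x\to 0^+}\frac{x^s}{h(x)}=\lim_{x\to 0^+}\bigl(-x^s\ln x\bigr)=0,
\]
so there is $\delta_s\in(0,1)$ with $x^s\le h(x)$ for all $0<x<\delta_s$. Everything after this is bookkeeping with covers.

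Next I would fix $s>0$, $\varepsilon>0$, and an arbitrary $\delta\in(0,\delta_s]$. Since by definition $\mathcal H^h(E)=\sup_{0<\rho\le\infty}\mathcal H^h_\rho(E)=0$, in particular $\mathcal H^h_\delta(E)=0$, so there is a countable cover $E\subseteq\bigcup_i B_i$ with $\mathrm{diam}(B_i)<\delta$ for every $i$ and $\sum_i h(\mathrm{diam}\,B_i)<\varepsilon$. Because each $\mathrm{diam}(B_i)<\delta\le\delta_s$, the comparison above gives $(\mathrm{diam}\,B_i)^s\le h(\mathrm{diam}\,B_i)$, hence $\sum_i(\mathrm{diam}\,B_i)^s<\varepsilon$; as $\{B_i\}$ is an admissible cover for $\mathcal H^s_\delta$, this shows $\mathcal H^s_\delta(E)\le\varepsilon$. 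Letting $\varepsilon\to 0$ gives $\mathcal H^s_\delta(E)=0$ for every $\delta\le\delta_s$, hence $\mathcal H^s(E)=\sup_\delta\mathcal H^s_\delta(E)=0$, and since $s>0$ was arbitrary, $\mathrm{dim}_H(E)=0$.

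I do not expect a real obstacle here: the argument is short, and the only points that need care are the order of quantifiers — one must choose $\delta\le\delta_s$ \emph{before} extracting the cover, so that the cover's mesh automatically lies in the range where $x^s\le h(x)$ — and the elementary limit $x^s=o(h(x))$ as $x\to0^+$, which is the precise form of the statement that $h$ is a thinner gauge than every $x^s$.
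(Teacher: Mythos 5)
Your proof is correct and is essentially the same as the paper's: both rest on the elementary fact that $x^s=o(h(x))$ as $x\to 0^+$ and then compare the pre-measures cover by cover. The only cosmetic difference is that the paper absorbs the comparison into a multiplicative constant $C_s$ valid on all of $[0,1)$ (so $\mathcal H^s_\delta(E)\le C_s\,\mathcal H^h_\delta(E)$), whereas you use the direct inequality $x^s\le h(x)$ for $x<\delta_s$ and compensate by restricting to covers of mesh at most $\delta_s$; both forms of bookkeeping lead to the same conclusion.
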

	
	\begin{proof}
		Let $s>0$. Then there is $C_s>0$ with $ x^s<-\frac {C_s}{\ln x}$ for all $0\leq x\leq 1$, since $\lim_{x\to 0^+}x^s\ln x=0$ by L'H\^opital's rule. Then for any $0<\delta<1$, 
		\begin{align*}
		\mathcal H^s_\delta(E)&=\inf\left\{\sum_i \left(\mathrm{diam}(B_i)\right)^s :\bigcup_i B_i\supseteq E,\mathrm{diam}(B_i)<\delta\right\}\\
		&\leq \inf\left\{\sum_i C_s h(\mathrm{diam} (B_i)) :\bigcup_i B_i\supseteq E,\mathrm{diam}(B_i)<\delta\right\}\\
		&\leq C_s \mathcal H^h(E)=0.
		\end{align*}
	\end{proof}
    
	\begin{thm}[Theorem 3.2 and Theorem 4.4 of \cite{Molter}, simplified]\label{primitive}
		Let $h$ be any dimension function. 
		Then there is an $F_\sigma$-set $E\subseteq \mathbb R$ such that $\mathcal H^h(E)=0$  and for any sequence $\{\alpha_m\}_{m=1}^\infty\subseteq \mathbb R$, we have
		$$
		\bigcap_{m=1}^\infty E+\alpha_m\neq \varnothing.
		$$
		In particular, $\mathrm{dim}_H(E)=0$ by the previous proposition.
	\end{thm}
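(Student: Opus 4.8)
The plan is to transfer the problem to the circle, construct there a countable union of extremely thin Cantor-type sets, and then pin down the required common point by a nested-interval argument.

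\emph{Reduction to the circle.} Write $\mathbb{T}=\mathbb{R}/\mathbb{Z}$ and let $\pi\colon\mathbb{R}\to\mathbb{T}$ be the quotient map. I would first show it suffices to produce a set $E_0\subseteq\mathbb{T}$ that is a countable union of compact sets, has $\mathcal{H}^h(E_0)=0$, and satisfies $\bigcap_{m=1}^\infty(E_0+\gamma_m)\neq\varnothing$ for every sequence $\{\gamma_m\}\subseteq\mathbb{T}$. Setting $E:=\pi^{-1}(E_0)$, a union over $k\in\mathbb{Z}$ of translates by $k$ of a set of representatives of $E_0$, makes $E$ an $F_\sigma$-set; translation invariance and countable subadditivity of the outer measure $\mathcal{H}^h$ give $\mathcal{H}^h(E)\le\sum_{k\in\mathbb{Z}}\mathcal{H}^h(E_0)=0$, so $\mathrm{dim}_H(E)=0$ by the Proposition above. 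For arbitrary reals $\{\alpha_m\}$ put $\gamma_m:=\pi(\alpha_m)$, pick $\bar t\in\bigcap_m(E_0+\gamma_m)$ and any lift $t$ of $\bar t$; then $\pi(t-\alpha_m)=\bar t-\gamma_m\in E_0$, hence $t-\alpha_m\in E$, i.e. $t\in E+\alpha_m$, for all $m$.

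\emph{Construction of $E_0$.} Fix an integer base $b\ge2$ and view $\mathbb{T}$ through base-$b$ expansions. Partition $\mathbb{N}$ into consecutive digit-blocks $B_1,B_2,\dots$ of lengths $\ell_1,\ell_2,\dots$ growing very fast (rate to be fixed at the end, depending only on $h$). For a set $\Theta\subseteq\mathbb{N}$ with enough small elements, let $C_\Theta\subseteq\mathbb{T}$ be the compact set of all $x$ whose base-$b$ digits vanish on every block $B_k$ with $k\notin\Theta$; equivalently $C_\Theta$ is a Cantor set that, at the levels of block $B_k$, branches into the full grid of $b^{\ell_k}$ subarcs when $k\in\Theta$ and is frozen when $k\notin\Theta$. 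Let $\Sigma$ be a carefully chosen \emph{countable} family of infinite, sufficiently sparse subsets $\Theta$ (sparse enough that each $C_\Theta$ is $\mathcal{H}^h$-null, but with their branching blocks spread so as to allow for arbitrary finite initial patterns), and set $E_0:=\bigcup_{\Theta\in\Sigma}C_\Theta$. This is a countable union of compact sets, and $\mathcal{H}^h(E_0)\le\sum_{\Theta\in\Sigma}\mathcal{H}^h(C_\Theta)=0$: a cell of $C_\Theta$ at a level ending block $B_K$ is an arc of length $\asymp b^{-(\ell_1+\cdots+\ell_K)}$ and $C_\Theta$ is covered by $\prod_{k\le K,\,k\in\Theta}b^{\ell_k}$ of them, so choosing the $\ell_k$ to grow fast enough that this count times $h\bigl(b^{-(\ell_1+\cdots+\ell_K)}\bigr)$ tends to $0$ along a subsequence of $K$ — possible because the elements of $\Theta$ are sparse — forces $\mathcal{H}^h(C_\Theta)=0$.

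\emph{Locating $t$.} Given $\{\gamma_m\}\subseteq\mathbb{T}$, I would build nested closed arcs $\mathbb{T}\supseteq J_1\supseteq J_2\supseteq\cdots$ with $|J_m|\to0$, assigning to each index $m$ some $\Theta_m\in\Sigma$, so as to keep the invariant: for all $i\le m$, the arc $J_m-\gamma_i$ lies inside one cell of $C_{\Theta_i}$, of level $L_i(m)$, with $L_i(m)\to\infty$ as $m\to\infty$ for each fixed $i$. Passing from $J_{m-1}$ to $J_m$ requires, for each $i<m$, driving $J-\gamma_i$ one branching block deeper into $C_{\Theta_i}$ (choosing the subcell), and for the new index $m$ choosing $\Theta_m$ together with an initial cell of $C_{\Theta_m}$ compatible with $J_{m-1}-\gamma_m$. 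Each such demand cuts down $J$ by a grid condition translated by $\gamma_i$; because the grid is full at a branching block, and because we may freely pick subcells and — for the new index, and for any index not yet ``locked'' — pick $\Theta_i$ from the rich family $\Sigma$, finitely many such conditions still leave an arc of positive length, which we take as $J_m$. Then $t:=\bigcap_m\overline{J_m}$ is a single point, and for each $m$ the invariant forces $t-\gamma_m$ into a nested intersection of cells of $C_{\Theta_m}$ of unbounded level, i.e. $t-\gamma_m\in C_{\Theta_m}\subseteq E_0$; thus $t\in\bigcap_m(E_0+\gamma_m)$.

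\emph{Main obstacle.} The delicate point is the combinatorial design of the countable family $\Sigma$ (and the block lengths $\ell_k$) so that the inductive step just described never stalls for an \emph{arbitrary} sequence $\{\gamma_m\}$. One must interleave the branching blocks of the various $C_\Theta$'s against the uncontrolled translations $\gamma_i$ so that at every stage each index handled so far has a fresh branching block available at a scale comparable to the current arc $J$, all the while keeping every individual $C_\Theta$ sparse enough to be $\mathcal{H}^h$-null; this bookkeeping is a streamlined version of Molter and Yavicoli's argument and is where essentially all of the work lies.
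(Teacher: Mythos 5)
Your proposal has a genuine structural gap, and it is not merely the ``bookkeeping'' you flag in the last paragraph. The culprit is the choice to make each $C_\Theta$ \emph{frozen} (a single admissible subcell) on the blocks $B_k$ with $k\notin\Theta$. For $\mathcal H^h(C_\Theta)=0$ you need $\Theta$ sparse, hence $\Theta^c$ cofinite; but then any two $\Theta,\Theta'\in\Sigma$ share a cofinite set of frozen blocks. On a common frozen block, asking for $t-\gamma_i\in C_{\Theta_i}$ and $t-\gamma_{i'}\in C_{\Theta_{i'}}$ simultaneously forces the digits of $t$ on that block to match those of $\gamma_i$ and, at the same time, those of $\gamma_{i'}$ (up to carries). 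Since the $\gamma_m$ are completely arbitrary, this system is already unsolvable with just two indices, no matter how cleverly $\Sigma$ and the block lengths $\ell_k$ are chosen, and no amount of adaptive shrinking of the arcs $J_m$ can fix an over-determined digit constraint.

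The paper's construction is built precisely to dodge this. Two devices do the work, neither of which appears in your sketch. First, the constrained digit in $F_n$ is allowed to be either $0$ or $M_n/2$: two admissible values spaced exactly half a level-$(n-1)$ cell apart. This makes the constraint translation-robust, since any translate of a level-$(n-1)$ interval still contains at least one admissible level-$n$ interval. Second, the sets $K_j=\bigcap_{n\in I_j}F_n$ are constrained on the sets $I_j=\{(2k-1)2^{j-1}\}$, which form a \emph{partition} of $\mathbb N$; consequently the constraints arising from $K_1+\alpha_1$, $K_2+\alpha_2,\dots$ act on pairwise disjoint digit positions and can never conflict. This is what lets the identity $\bigcap_m(K_m+\alpha_m)=\bigcap_u(F_u+\alpha_{j_u})$ reduce the whole intersection to a one-constraint-per-step nested-interval induction, with a clean ``any translate of the current interval contains the next one'' lemma closing each step. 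Your high-level plan (a countable union of thin Cantor-type sets plus a nested-interval argument) is the right shape, and the measure estimate $\mathcal H^h(E)=0$ is fine; but without these two devices the intersection argument cannot go through.
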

	
	\begin{proof}
		Let $M_n\in 2\mathbb N$ be an increasing sequence, $M_1\geq 4$, such that for all $n\geq 2$,
		$$
		h\left(\frac {1}{M_1M_2\cdots M_n}\right)\leq \frac 1 {M_1M_2\cdots M_{n-1}}.
		$$
		
		For each real number $x$, we consider its digit expansion with respect to the sequence $\{M_n\}$:
		$$
		x=[x]+\sum_{n=1}^\infty \frac  {x^{(n)}}{M_1M_2\cdots M_n},\quad 0\leq x^{(n)}\leq M_n-1.
		$$
		where $[x]$ denotes the integral part of $x$. 
		
		Let $F_n$, $n\in \mathbb N$ denote the collection of all real numbers such that its $n$-th digit, $x^{(n)}$, is $0$ or $M_n/2$. If there are two possible expansions of $x$ with one of them having $x^{(n)}= 0$ or $M_n/2$, include that number $x$ in $F_n$ as well (this ensures that $F_n$ is made up of disjoint closed intervals). Let $I_j:=\{(2k-1)2^{j-1}:k\in \mathbb N\}$ for $j\in \mathbb N$. Then $\{I_j\}_{j=1}^\infty$ forms a partition for $\mathbb N$. Define $K_j:=\cap_{n\in I_j}F_n$. For example, $K_2$ is the set of all real numbers so that their $2,6,10,14,\dots$-th digits are $0$ or $M_2/2$. Note $K_j$ is also closed.
		
		Lastly, define $E:=\cup_j K_j$. We claim that $E$ is the required $F_\sigma$-set.
		\begin{itemize}
	    \item
		To show $\mathcal H^h(E)=0$, it suffices to show $\mathcal H^h(K_j)=0$ for all $j$. Let $\delta>0$ be small, and cover $E_j$ by
		$$
		\frac {\prod_{i=1}^{(2k-1)2^{j-1}}M_i} {\prod_{l=1}^k \frac 1 2M_{(2l-1)2^{j-1}}}
		$$ 
		intervals of lengths
		$$
		\frac {1}{\prod_{i=1}^{(2k-1)2^{j-1}}M_i}<\delta.
		$$
		for all large $k$'s.
		Then we have
		\begin{align*}
		\mathcal H^h_\delta (K_j)
		&\leq \frac {\prod_{i=1}^{(2k-1)2^{j-1}}M_i} {\prod_{l=1}^{k} \frac 1 2 M_{(2l-1)2^{j-1}}}\,\,\cdot h\left(\frac {1}{\prod_{i=1}^{(2k-1)2^{j-1}}M_i}\right)\\
		&\leq \frac {\prod_{i=1}^{(2k-1)2^{j-1}}M_i} {\prod_{l=1}^{k} \frac 1 2 M_{(2l-1)2^{j-1}}}\,\,\cdot \frac {1}{\prod_{i=1}^{(2k-1)2^{j-1}-1}M_i}\\
		&=\frac 2{\prod_{l=1}^{k-1} \frac 1 2 M_{(2l-1)2^{j-1}}}\\
		&\leq \frac 1 {2^{k-2}}, \text{for all large }k.
		\end{align*}
		Letting $k\to \infty$, we have $\mathcal H^h_\delta (K_j)=0$. Letting $\delta\to 0^+$, we have then $\mathcal H^h(K_j)=0$. Thus $\mathcal H^h(E)=0$.
		
		\item 
		Now let $\{\alpha_m\}$ be given. We show
		\begin{equation*}
		  \bigcap_{m=1}^\infty E+\alpha_m\neq \varnothing.  
		\end{equation*}
	    We have $E\supseteq K_m$ for all $m\geq 1$, so it suffices to show
	    \begin{equation}\label{app1}
	        \bigcap_{m=1}^\infty E+\alpha_m\supseteq \bigcap_{m=1}^\infty K_m+\alpha_m.
	    \end{equation}
	    
	    But $K_1+\alpha_1=(F_1+\alpha_1)\cap (F_3+\alpha_1)\cap (F_5+\alpha_1)\cap \cdots$, $K_2+\alpha_2=(F_2+\alpha_2)\cap (F_6+\alpha_2)\cap (F_{10}+\alpha_2)\cap \cdots$, etc. 
	    We can rewrite the infinite intersection on the right hand side of \eqref{app1}  into:
	    \begin{equation}\label{app2}
	       \bigcap_{u=1}^\infty F_u+\alpha_{j_u},
	    \end{equation}
	    
		where $j_u$ is the greatest integer $v$ such that $2^{v-1}$ divides $u$. For example, the first few terms of the intersection are:
		$$
	    (F_1+\alpha_1)\cap (F_2+\alpha_2)\cap (F_3+\alpha_1)\cap(F_4+\alpha_3)\cap(F_5+\alpha_1)\cap (F_6+\alpha_2)\cap \cdots
		$$
		We would like to show this intersection is nonempty.
		
	    Denote $C_1:=[0, 1 /M_1]$. Since the distance between the centres of the two adjacent intervals in $F_2$ is $1 /(2M_1)$ and the intervals of $F_2$ are shorter in length than those of $F_1$, no matter how we translate $C_1$, there is an interval $C_2$ of $F_2$ that is contained in that translate of $C_1$.
		
		Hence for any given $\alpha_1,\alpha_2\in \mathbb R$, we can find such $C_2$ satisfying $C_2+\alpha_2\subseteq C_1+\alpha_1$. Similarly, one can find $C_3$ of $F_3$ such that $C_3+\alpha_1\subseteq C_2+\alpha_2$. Continuing in this way, we get a nested sequence of compact intervals with rapidly decreasing length:
		$$
		C_u+\alpha_{j_u}\subseteq C_{u-1}+\alpha_{j_{u-1}},\quad u\geq 2.
		$$
		By the nested interval theorem, the intersection in \eqref{app2} is nonempty, and hence so is the intersection in \eqref{app1}.
		\end{itemize}
	\end{proof}
	We remark that $E$ defined in this way is not closed. This was seen by taking $\{\alpha_m\}$ to be $\mathbb Q\cap [0,1]$ as in the introduction of the paper, but it can also be seen directly from this simplified construction. Indeed, $E^c$ is the set of all real numbers $x$ such that for any $j\in \mathbb N$, there is $k_j\in \mathbb N$ so that the $(2k-1)2^{j-1}$-th digit of $x$ is not $0$ or $M_n/2$. Particularly, if $x\in E^c$, then there is an increasing sequence $a_n\in \mathbb N$ such that the $a_n$-th digit of $x$ is not $0$ or $M_n/2$.
	
	If $E^c$ were open, this means for any $x\in E^c$, if $y$ is sufficiently close to $x$, then $y\in E^c$. However, we see that for any $\delta>0$, we can choose $|x-y|<\delta$ such that $y$ is a finite decimal number, so $y\notin E^c$.
	
	Lastly, we have that $E$ is dense in $\mathbb R$. Given $\epsilon>0$ and $x\in \mathbb R$, consider the digit expansion of $x$. There is some $j_0$ and some real number $y$ with the same digits as $x$ on all digits $1\leq j\leq j_0-1$ but having all digits $0$ for $j\geq j_0$, such that $|x-y|<\delta$. Then $y\in K_{j_0}\subseteq E$.

 \section*{Acknowledgement}
 I am grateful to my advisor Malabika Pramanik for her guidance throughout the preparation of this paper.
\begin{bibdiv}
   \begin{biblist}
   \bib{MR3518531}{article}{
   author={Bennett, M.},
   author={Iosevich, A.},
   author={Taylor, K.},
   title={Finite chains inside thin subsets of $\mathbb{R}^d$},
   journal={Anal. PDE},
   volume={9},
   date={2016},
   number={3},
   pages={597--614},
   issn={2157-5045},
   %doi={10.2140/apde.2016.9.597},
}
    \bib{Bourgain}{article}{
   author={Bourgain, J.},
   title={Construction of sets of positive measure not containing an affine image of a given infinite structure},
   journal={Israel Journal of Mathematics},
   volume={60},
   date={1987},
   number={3},
   pages={333--344},
   issn={1565-8511},
   %doi={10.1007/BF02780397},
}
   
   \bib{MR0272988}{article}{
   author={Davies, R. O.},
   title={Some remarks on the Kakeya problem},
   journal={Proc. Cambridge Philos. Soc.},
   volume={69},
   date={1971},
   pages={417--421},
   %review={\MR{0272988}},
   %doi={10.1017/s0305004100046867},
}

   \bib{MR0429704}{article}{
   author={Erd\H{o}s, P.},
   title={Remarks on some problems in number theory},
   note={Papers presented at the Fifth Balkan Mathematical Congress
   (Belgrade, 1974)},
   journal={Math. Balkanica},
   volume={4},
   date={1974},
   pages={197--202},
   issn={0350-2007},
   }
   
   \bib{MR0089886}{article}{
   author={Erd\H{o}s, P.},
   author={Kakutani, S.},
   title={On a perfect set},
   journal={Colloq. Math.},
   volume={4},
   date={1957},
   pages={195--196},
   issn={0010-1354},
   %doi={10.4064/cm-4-2-195-196},
}
   
   \bib{MR722418}{article}{
   author={Falconer, K. J.},
   title={On a problem of Erd\H{o}s on sequences and measurable sets},
   journal={Proc. Amer. Math. Soc.},
   volume={90},
   date={1984},
   number={1},
   pages={77--78},
   issn={0002-9939},
   %doi={10.2307/2044671},
}

\bib{MR3785600}{article}{
   author={Fraser, R.},
   author={Pramanik, M.},
   title={Large sets avoiding patterns},
   journal={Anal. PDE},
   volume={11},
   date={2018},
   number={5},
   pages={1083--1111},
   issn={2157-5045},
   %doi={10.2140/apde.2018.11.1083},
}

\bib{MR3365800}{article}{
   author={Grafakos, L.},
   author={Greenleaf, A.},
   author={Iosevich, A.},
   author={Palsson, E.},
   title={Multilinear generalized Radon transforms and point configurations},
   journal={Forum Math.},
   volume={27},
   date={2015},
   number={4},
   pages={2323--2360},
   issn={0933-7741},
   %doi={10.1515/forum-2013-0128},
}
\bib{MR3420476}{article}{
   author={Greenleaf, A.},
   author={Iosevich, A.},
   author={Liu, B.},
   author={Palsson, E.},
   title={A group-theoretic viewpoint on Erd\H{o}s-Falconer problems and the
   Mattila integral},
   journal={Rev. Mat. Iberoam.},
   volume={31},
   date={2015},
   number={3},
   pages={799--810},
   issn={0213-2230},
   %doi={10.4171/RMI/854},
}
   \bib{MR2431353}{article}{
   author={Keleti, T.},
   title={Construction of one-dimensional subsets of the reals not
   containing similar copies of given patterns},
   journal={Anal. PDE},
   volume={1},
   date={2008},
   number={1},
   pages={29--33},
   issn={2157-5045},
   %doi={10.2140/apde.2008.1.29},
}		
\bib{MR3451230}{article}{
   author={Keleti, T.},
   title={Are lines much bigger than line segments?},
   journal={Proc. Amer. Math. Soc.},
   volume={144},
   date={2016},
   number={4},
   pages={1535--1541},
   issn={0002-9939},
   %review={\MR{3451230}},
   %doi={10.1090/proc/12978},
}
\bib{MR1446560}{article}{
   author={Kolountzakis, M. N.},
   title={Infinite patterns that can be avoided by measure},
   journal={Bull. London Math. Soc.},
   volume={29},
   date={1997},
   number={4},
   pages={415--424},
   issn={0024-6093},
   %doi={10.1112/S0024609397003056},
}
\bib{MR2545245}{article}{
   author={\L aba, I.},
   author={Pramanik, M.},
   title={Arithmetic progressions in sets of fractional dimension},
   journal={Geom. Funct. Anal.},
   volume={19},
   date={2009},
   number={2},
   pages={429--456},
   issn={1016-443X},
   %doi={10.1007/s00039-009-0003-9},
}

\bib{MR2822418}{article}{
   author={M\'{a}th\'{e}, A.},
   title={Covering the real line with translates of a zero-dimensional
   compact set},
   journal={Fund. Math.},
   volume={213},
   date={2011},
   number={3},
   pages={213--219},
   issn={0016-2736},
   %doi={10.4064/fm213-3-2},
}
\bib{Molter}{article}{
   author={Molter, U.},
   author={Yavicoli, A.},
    title={Small Sets containing any Pattern},
   journal={Mathematical Proceedings of the Cambridge Philosophical Society},
   date={2018},
   pages={1-17},
   issn={0305-0041},
   %doi={10.1017/S0305004118000567},
}

\bib{MR3658188}{article}{
   author={Shmerkin, P.},
   title={Salem sets with no arithmetic progressions},
   journal={Int. Math. Res. Not. IMRN},
   date={2017},
   number={7},
   pages={1929--1941},
   issn={1073-7928},
   %doi={10.1093/imrn/rnw097},
}
	\end{biblist}
	\end{bibdiv}

\end{document}